\newcommand{\sectionsize}{\footnotesize}
\newcommand{\theoremsize}{\small}
\renewcommand{\subsectionautorefname}{\sectionsize\sf \subsectionautorefname}
\@ifdefinable\equationname{\let\equationname\equationautorefname}
\def\equationautorefname~#1\@empty\@empty\null{\protect{\theoremsize\sf
    (#1\@empty\@empty\null)}}%
\@ifdefinable\AMSname{\let\AMSname\AMSautorefname}
\def\AMSautorefname~#1\@empty\@empty\null{\rm (#1\@empty\@empty\null)}%
\@ifdefinable\itemname{\let\itemname\itemautorefname}
\def\itemautorefname~#1\@empty\@empty\null{\theoremsize\sf #1\@empty\@empty\null%
}%
\newcommand{\basetheorem}[3]{%
    \newtheorem{#1}{#2}[#3]
    \newtheorem*{#1*}{#2}
    \expandafter\def\csname #1autorefname\endcsname{#2}
}%
\newcommand{\maketheorem}[3]{%
    \newaliascnt{#1}{#2}
    \newtheorem{#1}[#1]{\theoremsize\sf #3}
    \aliascntresetthe{#1}
    \expandafter\def\csname #1autorefname\endcsname{\theoremsize\sf #3}
    \newtheorem{#1*}{#3}
}%
\newcommand{\baseremark}[3]{%
    \newtheorem{#1}{#2}{#3}
    \newtheorem*{#1*}{#2}
    \expandafter\def\csname #1autorefname\endcsname{#2}
}%
\newcommand{\makeremark}[3]{%
    \newaliascnt{#1}{#2}
    \newtheorem{#1}[equation]{#3}
    \aliascntresetthe{#1}
    \expandafter\def\csname #1autorefname\endcsname{\theoremsize\sf #3}
    \newtheorem{#1*}{#3}
}%
\theoremstyle{plain}   
\newcommand{\mypagesize}{
\textwidth= 6.5in
\textheight=8.75in
\voffset-.5in
\hoffset-.75in
\marginparwidth=56pt
\footskip.5in
}
\newcounter{are-there-sections}
\newcommand{\zzzzz}{}
\DeclareMathAlphabet{\smallchanc}{OT1}{pzc}%
                                 {m}{it}
\DeclareFontFamily{OT1}{pzc}{}
\DeclareFontShape{OT1}{pzc}{m}{it}%
             {<-> s * [1.100] pzcmi7t}{}
\DeclareMathAlphabet{\mathchanc}{OT1}{pzc}%
                                 {m}{it}
\newcommand{\mcL}{\mathchanc{L}}
\newcommand{\mcR}{\mathchanc{R}}
\newcommand{\mcX}{\mathchanc{X}}
\newcommand{\mcx}{\mathchanc{x}}
\newcommand{\sI}{\mathscr{I}}
\newcommand{\sJ}{\mathscr{J}}
\newcommand{\sL}{\mathscr{L}}
\newcommand{\sO}{\mathscr{O}}
\newcommand{\sfA}{{\sf A}}
\newcommand{\sfM}{{\sf M}}
\newcommand{\sfP}{{\sf P}}
\newcommand{\sfh}{{\sf h}}
\newcommand{\bA}{\mathbb{A}}
\newcommand{\bC}{\mathbb{C}}
\newcommand{\bH}{\mathbb{H}}
\newcommand{\bN}{\mathbb{N}}
\newcommand{\bP}{\mathbb{P}}
\newcommand{\bQ}{\mathbb{Q}}
\newcommand{\bZ}{\mathbb{Z}}
\def\frm{\mathfrak{m}}
\newcommand{\frn}{\mathfrak{n}}
\DeclareSymbolFont{largesymbolsA}{U}{jkpexa}{m}{n}
\DeclareMathSymbol{\varprod}{\mathop}{largesymbolsA}{16}
\newcommand{\LeftEqNo}{\let\veqno\@@leqno}
\newcommand{\into}{\hookrightarrow}
\newcommand{\onto}{\twoheadrightarrow}
\newcommand{\properideal}%
        {\subsetneq}
\newcommand{\wt}{\widetilde}
\newcommand{\what}{\widehat}
\newcommand{\leteq}{\colon\!\!\!=}
\newcommand\dash[1]{\rule[-.2ex]{#1}{.4pt}}
\DeclareMathOperator{\ann}{\mathchanc{ann}}
\DeclareMathOperator{\kar}{char}
\newcommand{\sExt}[0]{{\mathchanc{Ext}}}
\DeclareMathOperator{\Hom}{Hom}
\newcommand{\sHom}[0]{{\mathchanc{Hom}}}
\DeclareMathOperator{\im}{{im}}
\DeclareMathOperator{\invlim}{\ensuremath{\underset{\longleftarrow}{\lim}}}
\DeclareMathOperator{\length}{{length}}
\newcommand{\lotimes}{\overset{L}{\otimes}}
\newcommand{\red}{\mathrm{red}}
\DeclareMathOperator{\Spec}{{Spec}}
\DeclareMathOperator{\Soc}{{Soc}}
\DeclareMathOperator{\supp}{{supp}}
\DeclareMathOperator{\skvert}{{\,\vert\,}}
\newcommand{\factor}[2]{\left. \raise .2em\hbox{\ensuremath{#1}\vphantom{$I^d$}}
\hskip -.1em \right/ \hskip -.4em \raise -.3em\hbox{\ensuremath{#2}}}%
\newcommand\mtimes[3]{{\varprod_{#1}^{#2}}_{\raise 1ex \hbox{\scriptsize #3}}}%
\newcommand{\myR}{{\mcR\!}}
\newcommand{\myL}{{\mcL\!}}
\newcommand{\blank}{\dash{1em}}
\newcommand{\kdot}{{{\,\begin{picture}(1,1)(-1,-2)\circle*{2}\end{picture}\,}}}
\newcommand{\mydot}{\kdot}
\newcommand{\cmx}[1]{{#1}^{\raisebox{.15em}{\ensuremath\kdot}}}
\newcommand{\uldcx}[1]{\underline{\omega}^\kdot_{#1}}
\newcommand{\dcx}[1]{{\omega}^\kdot_{#1}}
\def\dimcoh#1.#2.#3.{h^{#1}(#2,#3)}
\def\hypcoh#1.#2.#3.{\mathbb H_{\vphantom{l}}^{#1}(#2,#3)}
\def\loccoh#1.#2.#3.#4.{H^{#1}_{#2}(#3,#4)}
\def\dimloccoh#1.#2.#3.#4.{h^{#1}_{#2}(#3,#4)}
\def\lochypcoh#1.#2.#3.#4.{\mathbb H^{#1}_{#2}(#3,#4)}
\def\seslong#1.#2.#3.{0  \longrightarrow  #1   \longrightarrow 
 #2 \longrightarrow #3 \longrightarrow 0} 
\def\sesshort#1.#2.#3.{0
 \rightarrow #1 \rightarrow #2 \rightarrow #3 \rightarrow 0}
\def\dist#1.#2.#3.{  #1   \longrightarrow 
 #2 \longrightarrow #3 \stackrel{+1}{\longrightarrow} } 
\def\CDdist#1.#2.#3.{  #1   @>>>  #2  @>>>   #3 @>+1>> }  
\def\shortses#1.#2.#3.{0  \rightarrow  #1   \rightarrow 
 #2  \rightarrow   #3 \rightarrow  0}
\def\shortdist#1.#2.#3.{  #1   \rightarrow 
 #2  \rightarrow   #3 \stackrel{+1}{\rightarrow} }  
\def\ddist#1.#2.#3.#4.#5.#6.{\CD
#1 @>>> #2 @>>> #3 @>+1>> \\
@VVV @VVV @VVV \\
#4 @>>> #5 @>>> #6 @>+1>> 
\endCD}
\def\ddistun#1.#2.#3.#4.#5.#6.{\CD
#1 @>>> #2 @>>> #3 @>+1>> \\
@. @VVV @VVV  \\
#4 @>>> #5 @>>> #6 @>+1>> 
\endCD}
\def\Iff#1#2#3{
\hfil\hbox{\hsize =#1
\vtop{\noin #2}
\hskip.5cm 
\lower.5\baselineskip\hbox{$\Leftrightarrow$}\hskip.5cm
\vtop{\noin #3}}\hfil\medskip}
\newcommand{\union}\cup
\newcommand{\intersect}\cap
\newcommand{\Union}\bigcup
\newcommand{\Intersect}\bigcap
\def\myoplus#1.#2.{\underset #1 \to {\overset #2 \to \oplus}}
\newcommand{\resto}[1]{\raise -.5ex\hbox{$\vert$}_{#1}}
\newcommand{\ses}{short exact sequence\xspace}
\newcommand\dbcx[1]{\underline\Omega_{#1}^0}
\newcommand{\DB}{{Du\thinspace\nolinebreak Bois}\xspace}
\newcommand{\sings}{singularities\xspace}
\begin{document}
\makeatletter
\definecolor{brick}{RGB}{204,0,0}
\def\@cite#1#2{{%
 \m@th\upshape\mdseries[{\small\sffamily #1}{\if@tempswa, \small\sffamily
   \color{brick} #2\fi}]}}
\newcommand{\sandor}{{\color{blue}{S\'andor \mdyydate\today}}}
\newenvironment{refmr}{}{}
\newcommand{\biblio}{%
\bibliographystyle{/home/kovacs/tex/TeX_input/skalpha} 
\bibliography{/home/kovacs/tex/TeX_input/Ref} 
}
%
\setitemize[1]{leftmargin=*,parsep=0em,itemsep=0.125em,topsep=0.125em}
\newcommand\james{M\hskip-.1ex\raise .575ex \hbox{\text{c}}\hskip-.075ex Kernan}

%
\renewcommand\thesubsection{\thesection.\Alph{subsection}}
\renewcommand\subsection{
  \renewcommand{\sfdefault}{phv}
  \@startsection{subsection}%
  {2}{0pt}{-\baselineskip}{.2\baselineskip}{\raggedright
    \sffamily\itshape\small
  }}
\renewcommand\section{
  \renewcommand{\sfdefault}{phv}
  \@startsection{section} %
  {1}{0pt}{\baselineskip}{.2\baselineskip}{\centering
    \sffamily
    \scshape
}}

\setlist[enumerate]{itemsep=3pt,topsep=3pt,leftmargin=2em,label={\rm (\roman*)}}
\newlist{enumalpha}{enumerate}{1}
\setlist[enumalpha]{itemsep=3pt,topsep=3pt,leftmargin=2em,label=(\alph*\/)}
\newcounter{parentthmnumber}
\setcounter{parentthmnumber}{0}
\newcounter{currentparentthmnumber}
\setcounter{currentparentthmnumber}{0}
\newcounter{nexttag}
\newcommand{\setnexttag}{%
  \setcounter{nexttag}{\value{enumi}}%
  \addtocounter{nexttag}{1}%
}
\newcommand{\placenexttag}{%
\tag{\roman{nexttag}}%
}

\newenvironment{thmlista}{%
\label{parentthma}
\begin{enumerate}
}{%
\end{enumerate}
}
\newlist{thmlistaa}{enumerate}{1}
\setlist[thmlistaa]{label=(\arabic*), ref=\autoref{parentthm}\thethm(\arabic*)}
\newcommand*{\parentthmlabeldef}{%
  \expandafter\newcommand
  \csname parentthm\the\value{parentthmnumber}\endcsname
}
\newcommand*{\ptlget}[1]{%
  \romannumeral-`\x
  \ltx@ifundefined{parentthm\number#1}{%
    \ltx@space
    \parentthmundefined
  }{%
    \expandafter\ltx@space
    \csname mymacro\number#1\endcsname
  }%
}  
\newcommand*{\parentthmundefined}{\textbf{??}}
\parentthmlabeldef{parentthm}
\newenvironment{thmlistr}{%
\label{parentthm}
\begin{thmlistrr}}{%
\end{thmlistrr}}
\newlist{thmlistrr}{enumerate}{1}
\setlist[thmlistrr]{label=(\roman*), ref=\autoref{parentthm}(\roman*)}
\newcounter{proofstep}%
\setcounter{proofstep}{0}%
\newcommand{\pstep}[1]{%
  \smallskip
  \noindent
  \emph{{\sc Step \arabic{proofstep}:} #1.}\addtocounter{proofstep}{1}}
\newcounter{lastyear}\setcounter{lastyear}{\the\year}
\addtocounter{lastyear}{-1}
\newcommand\sideremark[1]{%
\normalmarginpar
\marginpar
[
\hskip .45in
\begin{minipage}{.75in}
\tiny #1
\end{minipage}
]
{
\hskip -.075in
\begin{minipage}{.75in}
\tiny #1
\end{minipage}
}}
\newcommand\rsideremark[1]{
\reversemarginpar
\marginpar
[
\hskip .45in
\begin{minipage}{.75in}
\tiny #1
\end{minipage}
]
{
\hskip -.075in
\begin{minipage}{.75in}
\tiny #1
\end{minipage}
}}
\newcommand\Index[1]{{#1}\index{#1}}
\newcommand\inddef[1]{\emph{#1}\index{#1}}
\newcommand\noin{\noindent}
\newcommand\hugeskip{\bigskip\bigskip\bigskip}
\newcommand\smc{\sc}
\newcommand\dsize{\displaystyle}
\newcommand\sh{\subheading}
\newcommand\nl{\newline}
\newcommand\input /home/kovacs/tex/latex/{\input /home/kovacs/tex/latex/} 
\newcommand\Get{\Input /home/kovacs/tex/latex/} 
\newcommand\toappear{\rm (to appear)}
\newcommand\mycite[1]{[#1]}
\newcommand\myref[1]{(\ref{#1})}
\newcommand{\parref}[1]{\eqref{\bf #1}}
\newcommand\myli{\hfill\newline\smallskip\noindent{$\bullet$}\quad}
\newcommand\vol[1]{{\bf #1}\ } 
\newcommand\yr[1]{\rm (#1)\ } 
\newcommand\cf{cf.\ \cite}
\newcommand\mycf{cf.\ \mycite}
\newcommand\te{there exist\xspace}
\newcommand\st{such that\xspace}
\newcommand\CM{Cohen-Macaulay\xspace}
\newcommand\myskip{3pt}
\newtheoremstyle{bozont}{3pt}{3pt}%
     {\itshape}
     {}
     {\bfseries}
     {.}
     {.5em}
     {\thmname{#1}\thmnumber{ #2}\thmnote{ #3}}%
\newtheoremstyle{bozont-sub}{3pt}{3pt}%
     {\itshape}
     {}
     {\bfseries}
     {.}
     {.5em}
     {\thmname{#1}\ \arabic{section}.\arabic{thm}.\thmnumber{#2}\thmnote{ \rm #3}}
\newtheoremstyle{bozont-named-thm}{3pt}{3pt}%
     {\itshape}
     {}
     {\bfseries}
     {.}
     {.5em}
     {\thmname{#1}\thmnumber{#2}\thmnote{ #3}}
\newtheoremstyle{bozont-named-bf}{3pt}{3pt}%
     {}
     {}
     {\bfseries}
     {.}
     {.5em}
     {\thmname{#1}\thmnumber{#2}\thmnote{ #3}}
\newtheoremstyle{bozont-named-sf}{3pt}{3pt}%
     {}
     {}
     {\sffamily}
     {.}
     {.5em}
     {\thmname{#1}\thmnumber{#2}\thmnote{ #3}}
\newtheoremstyle{bozont-named-sc}{3pt}{3pt}%
     {}
     {}
     {\scshape}
     {.}
     {.5em}
     {\thmname{#1}\thmnumber{#2}\thmnote{ #3}}
\newtheoremstyle{bozont-named-it}{3pt}{3pt}%
     {}
     {}
     {\itshape}
     {.}
     {.5em}
     {\thmname{#1}\thmnumber{#2}\thmnote{ #3}}
\newtheoremstyle{bozont-sf}{3pt}{3pt}%
     {}
     {}
     {\sffamily}
     {.}
     {.5em}
     {\thmname{#1}\thmnumber{ #2}\thmnote{ \rm #3}}
\newtheoremstyle{bozont-sc}{3pt}{3pt}%
     {}
     {}
     {\scshape}
     {.}
     {.5em}
     {\thmname{#1}\thmnumber{ #2}\thmnote{ \rm #3}}
\newtheoremstyle{bozont-remark}{3pt}{3pt}%
     {}
     {}
     {\scshape}
     {.}
     {.5em}
     {\thmname{#1}\thmnumber{ #2}\thmnote{ \rm #3}}
\newtheoremstyle{bozont-subremark}{3pt}{3pt}%
     {}
     {}
     {\scshape}
     {.}
     {.5em}
     {\thmname{#1}\ \arabic{section}.\arabic{thm}.\thmnumber{#2}\thmnote{ \rm #3}}
\newtheoremstyle{bozont-def}{3pt}{3pt}%
     {}
     {}
     {\bfseries}
     {.}
     {.5em}
     {\thmname{#1}\thmnumber{ #2}\thmnote{ \rm #3}}
\newtheoremstyle{bozont-reverse}{3pt}{3pt}%
     {\itshape}
     {}
     {\bfseries}
     {.}
     {.5em}
     {\thmnumber{#2.}\thmname{ #1}\thmnote{ \rm #3}}
\newtheoremstyle{bozont-reverse-sc}{3pt}{3pt}%
     {\itshape}
     {}
     {\scshape}
     {.}
     {.5em}
     {\thmnumber{#2.}\thmname{ #1}\thmnote{ \rm #3}}
\newtheoremstyle{bozont-reverse-sf}{3pt}{3pt}%
     {\itshape}
     {}
     {\sffamily}
     {.}
     {.5em}
     {\thmnumber{#2.}\thmname{ #1}\thmnote{ \rm #3}}
\newtheoremstyle{bozont-remark-reverse}{3pt}{3pt}%
     {}
     {}
     {\sc}
     {.}
     {.5em}
     {\thmnumber{#2.}\thmname{ #1}\thmnote{ \rm #3}}
\newtheoremstyle{bozont-def-reverse}{3pt}{3pt}%
     {}
     {}
     {\bfseries}
     {.}
     {.5em}
     {\thmnumber{#2.}\thmname{ #1}\thmnote{ \rm #3}}
\newtheoremstyle{bozont-def-newnum-reverse}{3pt}{3pt}%
     {}
     {}
     {\bfseries}
     {}
     {.5em}
     {\thmnumber{#2.}\thmname{ #1}\thmnote{ \rm #3}}
\newtheoremstyle{bozont-def-newnum-reverse-plain}{3pt}{3pt}%
   {}
   {}
   {}
   {}
   {.5em}
   {\thmnumber{\!(#2)}\thmname{ #1}\thmnote{ \rm #3}}
\newtheoremstyle{bozont-number}{3pt}{3pt}%
   {}
   {}
   {}
   {}
   {0pt}
   {\thmnumber{\!(#2)} }
\newtheoremstyle{bozont-step}{3pt}{3pt}%
   {\itshape}
   {}
   {\scshape}
   {}
   {.5em}
   {$\boxed{\text{\sc \thmname{#1}~\thmnumber{#2}:\!}}$}
\theoremstyle{bozont}    
\ifnum \value{are-there-sections}=0 {%
  \basetheorem{proclaim}{Theorem}{}
} 
\else {%
  \basetheorem{proclaim}{Theorem}{section}
} 
\fi
\maketheorem{thm}{proclaim}{Theorem}
\maketheorem{mainthm}{proclaim}{Main Theorem}
\maketheorem{cor}{proclaim}{Corollary} 
\maketheorem{cors}{proclaim}{Corollaries} 
\maketheorem{lem}{proclaim}{Lemma} 
\maketheorem{prop}{proclaim}{Proposition} 
\maketheorem{conj}{proclaim}{Conjecture}
\basetheorem{subproclaim}{Theorem}{proclaim}
\maketheorem{sublemma}{subproclaim}{Lemma}
\newenvironment{sublem}{%
\setcounter{sublemma}{\value{equation}}
\begin{sublemma}}
{\end{sublemma}}
\theoremstyle{bozont-sub}
\maketheorem{subthm}{equation}{Theorem}
\maketheorem{subcor}{equation}{Corollary} 
\maketheorem{subprop}{equation}{Proposition} 
\maketheorem{subconj}{equation}{Conjecture}
\theoremstyle{bozont-named-thm}
\maketheorem{namedthm}{proclaim}{}
\theoremstyle{bozont-sc}
\newtheorem{proclaim-special}[proclaim]{\specialthmname}
\newenvironment{proclaimspecial}[1]
     {\def\specialthmname{#1}\begin{proclaim-special}}
     {\end{proclaim-special}}
\theoremstyle{bozont-subremark}
\maketheorem{subrem}{equation}{Remark}
\maketheorem{subnotation}{equation}{Notation} 
\maketheorem{subassume}{equation}{Assumptions} 
\maketheorem{subobs}{equation}{Observation} 
\maketheorem{subexample}{equation}{Example} 
\maketheorem{subex}{equation}{Exercise} 
\maketheorem{inclaim}{equation}{Claim} 
\maketheorem{subquestion}{equation}{Question}
\theoremstyle{bozont-remark}
\basetheorem{subremark}{Remark}{proclaim}
\makeremark{subclaim}{subremark}{Claim}
\maketheorem{rem}{proclaim}{Remark}
\maketheorem{claim}{proclaim}{Claim} 
\maketheorem{notation}{proclaim}{Notation} 
\maketheorem{assume}{proclaim}{Assumptions} 
\maketheorem{obs}{proclaim}{Observation} 
\maketheorem{example}{proclaim}{Example} 
\maketheorem{examples}{proclaim}{Examples} 
\maketheorem{complem}{equation}{Complement}
\maketheorem{const}{proclaim}{Construction}   
\maketheorem{ex}{proclaim}{Exercise} 
\newtheorem{case}{Case} 
\newtheorem{subcase}{Subcase}   
\newtheorem{step}{Step}
\newtheorem{approach}{Approach}
\maketheorem{Fact}{proclaim}{Fact}
\newtheorem{fact}{Fact}
\newtheorem*{SubHeading*}{\SubHeadingName}%
\newtheorem{SubHeading}[proclaim]{\SubHeadingName}
\newtheorem{sSubHeading}[equation]{\sSubHeadingName}
\newenvironment{demo}[1] {\def\SubHeadingName{#1}\begin{SubHeading}}
  {\end{SubHeading}}%
\newenvironment{subdemo}[1]{\def\sSubHeadingName{#1}\begin{sSubHeading}}
  {\end{sSubHeading}} %
\newenvironment{demo-r}[1]{\def\SubHeadingName{#1}\begin{SubHeading-r}}
  {\end{SubHeading-r}}%
\newenvironment{demor}[1]{\def\SubHeadingName{#1}\begin{SubHeading-r}}
  {\end{SubHeading-r}}%
\newenvironment{subdemo-r}[1]{\def\sSubHeadingName{#1}\begin{sSubHeading-r}}
  {\end{sSubHeading-r}} %
\newenvironment{demo*}[1]{\def\SubHeadingName{#1}\begin{SubHeading*}}
  {\end{SubHeading*}}%
\maketheorem{defini}{proclaim}{Definition}
\maketheorem{defnot}{proclaim}{Definitions and notation}
\maketheorem{question}{proclaim}{Question}
\maketheorem{terminology}{proclaim}{Terminology}
\maketheorem{crit}{proclaim}{Criterion}
\maketheorem{pitfall}{proclaim}{Pitfall}
\maketheorem{addition}{proclaim}{Addition}
\maketheorem{principle}{proclaim}{Principle} 
\maketheorem{condition}{proclaim}{Condition}
\maketheorem{exmp}{proclaim}{Example}
\maketheorem{hint}{proclaim}{Hint}
\maketheorem{exrc}{proclaim}{Exercise}
\maketheorem{prob}{proclaim}{Problem}
\maketheorem{ques}{proclaim}{Question}    
\maketheorem{alg}{proclaim}{Algorithm}
\maketheorem{remk}{proclaim}{Remark}          
\maketheorem{note}{proclaim}{Note}            
\maketheorem{summ}{proclaim}{Summary}         
\maketheorem{notationk}{proclaim}{Notation}   
\maketheorem{warning}{proclaim}{Warning}  
\maketheorem{defn-thm}{proclaim}{Definition--Theorem}  
\maketheorem{convention}{proclaim}{Convention}  
\maketheorem{hw}{proclaim}{Homework}
\maketheorem{hws}{proclaim}{\protect{${\mathbb\star}$}Homework}
\newtheorem*{ack}{Acknowledgment}
\newtheorem*{acks}{Acknowledgments}
\theoremstyle{bozont-number}
\newtheorem{say}[proclaim]{}
\newtheorem{subsay}[equation]{}
\theoremstyle{bozont-def}    
\maketheorem{defn}{proclaim}{Definition}
\maketheorem{subdefn}{equation}{Definition}
\theoremstyle{bozont-reverse}    
\maketheorem{corr}{proclaim}{Corollary} 
\maketheorem{lemr}{proclaim}{Lemma} 
\maketheorem{propr}{proclaim}{Proposition} 
\maketheorem{conjr}{proclaim}{Conjecture}
\theoremstyle{bozont-remark-reverse}
\newtheorem{SubHeading-r}[proclaim]{\SubHeadingName}
\newtheorem{sSubHeading-r}[equation]{\sSubHeadingName}
\newtheorem{SubHeadingr}[proclaim]{\SubHeadingName}
\newtheorem{sSubHeadingr}[equation]{\sSubHeadingName}
\theoremstyle{bozont-reverse-sc}
\newtheorem{proclaimr-special}[proclaim]{\specialthmname}
\newenvironment{proclaimspecialr}[1]%
{\def\specialthmname{#1}\begin{proclaimr-special}}%
{\end{proclaimr-special}}
\theoremstyle{bozont-remark-reverse}
\maketheorem{remr}{proclaim}{Remark}
\maketheorem{subremr}{equation}{Remark}
\maketheorem{notationr}{proclaim}{Notation} 
\maketheorem{assumer}{proclaim}{Assumptions} 
\maketheorem{obsr}{proclaim}{Observation} 
\maketheorem{exampler}{proclaim}{Example} 
\maketheorem{exr}{proclaim}{Exercise} 
\maketheorem{claimr}{proclaim}{Claim} 
\maketheorem{inclaimr}{equation}{Claim} 
\maketheorem{definir}{proclaim}{Definition}
\theoremstyle{bozont-def-newnum-reverse}    
\maketheorem{newnumr}{proclaim}{}
\theoremstyle{bozont-def-newnum-reverse-plain}
\maketheorem{newnumrp}{proclaim}{}
\theoremstyle{bozont-def-reverse}    
\maketheorem{defnr}{proclaim}{Definition}
\maketheorem{questionr}{proclaim}{Question}
\newtheorem{newnumspecial}[proclaim]{\specialnewnumname}
\newenvironment{newnum}[1]{\def\specialnewnumname{#1}\begin{newnumspecial}}{\end{newnumspecial}}
\theoremstyle{bozont-step}
\newtheorem{bstep}{Step}
\newcounter{thisthm} 
\newcounter{thissection} 
\newcommand{\ilabel}[1]{%
  \newcounter{#1}%
  \setcounter{thissection}{\value{section}}%
  \setcounter{thisthm}{\value{proclaim}}%
  \label{#1}}
\newcommand{\iref}[1]{%
  (\the\value{thissection}.\the\value{thisthm}.\ref{#1})}
\newcounter{lect}
\setcounter{lect}{1}
\newcommand\resetlect{\setcounter{lect}{1}\setcounter{page}{0}}
\newcommand\lecture{\newpage\centerline{\sfbf Lecture \arabic{lect}}
\addtocounter{lect}{1}}
\newcounter{topic}
\setcounter{topic}{1}
\newenvironment{topic}
{\noindent{\sc Topic 
\arabic{topic}:\ }}{\addtocounter{topic}{1}\par}
\counterwithin{equation}{proclaim}
\counterwithin{figure}{section} 
\newcommand\equinsect{\numberwithin{equation}{section}}
\newcommand\equinthm{\numberwithin{equation}{proclaim}}
\newcommand\figinthm{\numberwithin{figure}{proclaim}}
\newcommand\figinsect{\numberwithin{figure}{section}}
\newenvironment{sequation}{%
\setcounter{equation}{\value{thm}}
\numberwithin{equation}{section}%
\begin{equation}%
}{%
\end{equation}%
\numberwithin{equation}{proclaim}%
\addtocounter{proclaim}{1}%
}
\newcommand{\num}{\arabic{section}.\arabic{proclaim}}
\newenvironment{pf}{\smallskip \noindent {\sc Proof. }}{\qed\smallskip}
\newenvironment{enumerate-p}{
  \begin{enumerate}}
  {\setcounter{equation}{\value{enumi}}\end{enumerate}}
\newenvironment{enumerate-cont}{
  \begin{enumerate}
    {\setcounter{enumi}{\value{equation}}}}
  {\setcounter{equation}{\value{enumi}}
  \end{enumerate}}
\let\lenumi\labelenumi
\newcommand{\rmlabels}{\renewcommand{\labelenumi}{\rm \lenumi}}
\newcommand{\rmlabelsoff}{\renewcommand{\labelenumi}{\lenumi}}
\newenvironment{heading}{\begin{center} \sc}{\end{center}}
\newcommand\subheading[1]{\smallskip\noindent{{\bf #1.}\ }}
\newlength{\swidth}
\setlength{\swidth}{\textwidth}
\addtolength{\swidth}{-,5\parindent}
\newenvironment{narrow}{
  \medskip\noindent\hfill\begin{minipage}{\swidth}}
  {\end{minipage}\medskip}
\newcommand\nospace{\hskip-.45ex}
\newcommand{\sfbf}{\sffamily\bfseries}
\newcommand{\sfbfs}{\sffamily\bfseries\small}
\newcommand{\twidle}{\textasciitilde}
\makeatother

\newcounter{stepp}
\setcounter{stepp}{0}
\newcommand{\nextstep}[1]{%
  \addtocounter{stepp}{1}%
  \begin{bstep}%
    {#1}
  \end{bstep}%
  \noindent%
}
\newcommand{\resetsteps}{\setcounter{stepp}{0}}

\title
{Deformations of log canonical and $F$-pure singularities}
\author{J\'anos Koll\'ar and S\'andor J Kov\'acs}
\date{\today}
\thanks{J\'anos Koll\'ar was supported in part by NSF Grant DMS-1362960. \\ 
\indent S\'andor Kov\'acs was supported in part by NSF Grant DMS-1565352 
and the Craig McKibben and Sarah Merner Endowed Professorship in Mathematics.}
\address{JK: Department of Mathematics, Princeton University, Fine
  Hall, Washington Road, Princeton, NJ 08544-1000, USA} 
\email{kollar@math.princeton.edu}
\urladdr{http://www.math.princeton.edu/$\sim$kollar\xspace}
\address{SK: University of Washington, Department of Mathematics, Box 354350,
Seattle, WA 98195-4350, USA}
\email{skovacs@uw.edu\xspace}
\urladdr{http://www.math.washington.edu/$\sim$kovacs\xspace}

\begin{abstract}
  We introduce a lifting property for local cohomology, which leads to a unified
  treatment of the dualizing complex for flat morphisms with semi-log-canonical,
  Du~Bois or $F$-pure fibers.  As a consequence we obtain that, in all 3 cases, the
  cohomology sheaves of the relative dualizing complex are flat and commute with base
  change.  We also derive several consequences for deformations of
  semi-log-canonical, Du~Bois and $F$-pure singularities.
\end{abstract}

\maketitle
\setcounter{tocdepth}{1}
\tableofcontents

\newcommand{\toploccohs}{liftable local cohomology\xspace}
\newcommand{\Fp}{$F$-pure\xspace}
\newcommand{\Fan}{$F$-anti-nilpotent\xspace}

\section{Introduction}\label{sec:introduction}
\noindent
One of the difficulties of higher dimensional birational geometry and moduli theory
is that the occuring  singularities are  frequently not Cohen-Macaulay.

On a proper Cohen-Macaulay scheme we have a dualizing \emph{sheaf} $\omega_X$ and
Serre duality. By contrast, on an arbitrary proper scheme we have a dualizing
\emph{complex} $\dcx X$ and the isomorphism of Serre duality is replaced by a
spectral sequence of Grothendieck duality.

The ``most important'' cohomology sheaf of the dualizing complex $\dcx X$ is
\[
\sfh^{-\dim X}(\dcx X)\simeq \omega_X,
\]
and $X$ is Cohen-Macaulay if and only if the other cohomology sheaves
$\sfh^{-i}(\dcx X)$ are all zero, cf.\ \cite[3.5.1]{Conrad00}. Thus these
$\sfh^{-i}(\dcx X)$ measure ``how far'' $X$ is from being Cohen-Macaulay; see
\autoref{prop:S_n-via-hi} for a more precise claim.  Our main result implies that in
flat families $X\to B$ with log canonical or $F$-pure fibers, the cohomology sheaves
$\sfh^{-i}(\dcx{X/B})$ are flat over $B$ and commute with base change. In particular,
being \CM is a deformation invariant property for such singularities. Note that for
flat families \CM is always an open condition but usually not a closed one.

One of the puzzles of higher dimensional singularity theory is that while the
singularities of the Minimal Model Program (log terminal, log canonical, Du~Bois,
etc.) and of positive characteristic commutative algebra ($F$-pure, $F$-injective,
etc.) are very closely related, the methods to study them are completely
different. Here we isolate the following quite powerful common property for some of
these classes.

\begin{defini}\label{def:liftable-cohom}
  Let $A$ be a noetherian ring, and $(T,\mathfrak n)$ a noetherian local
  $A$-algebra. We say that $T$ has \emph{\toploccohs over $A$} if for any noetherian
  local $A$-algebra $(R,\frm)$ and nilpotent ideal $I\subset R$ such that
  $R/I\simeq T$, the natural morphism on local cohomology
  \[
  \xymatrix{%
    H^i_\frm(R) \ar@{->>}[r] & H^i_{\mathfrak n}(T) }
  \]
  is surjective for all $i$.  
  
  We say that $T$ has \emph{\toploccohs} if it has \toploccohs over $\bZ$.
\end{defini}

\begin{rem}\label{rem:inheriting-top-loc-cohs}
  Notice that, using the above notation, if $\phi:A'\to A$ is a ring homomorphism
  from another noetherian ring $A'$ then if $T$ has \toploccohs over $A'$, then it
  also has \toploccohs over $A$. In particular, if $T$ has \toploccohs over $\bZ$,
  then it has \toploccohs over any noetherian ring $A$ justifying the above
  terminology.

  Furthermore, if $A=k$ is a field of characteristic $0$ then the notions of having
  \toploccohs over $k$ and over $\bZ$ are equivalent. This follows in one direction
  by the above and in the other by the Cohen structure theorem
  \cite[\href{https://stacks.math.columbia.edu/tag/032A}{Tag 032A}]{stacks-project}.
\end{rem}

\begin{rem}
  A closely related notion, a ring being \emph{cohomologically full}, is defined in
  \cite{DSM18}. This notion and results of this article are used in
  \cite{ConcaVarbaro18} to settle a conjecture by Herzog on ideals with square-free
  initial ideals.
\end{rem}

We prove in \autoref{DB.toploccohs} 
that \DB singularities have \toploccohs.  On the Frobenius side, the right concept
seems to be $F$-anti-nilpotent singularities, a notion introduced in
\cite{MR2460693}, that lies between $F$-pure and $F$-injective
\cite{MR3271179,ma_quy_2017}. We are very grateful to L.~Ma and K.~Schwede for
pointing out that, by a result of Ma--Schwede--Shimomoto \cite{MSS17},
$F$-anti-nilpotent singularities also have \toploccohs over their ground field. We
discuss this in \autoref{prop:F-anti-nilp-has-liftable-cohs}.





With this definition (cf.~\autoref{def:top-loc-cohs-for-schemes}), our main technical
theorem is the following.

\begin{thm}[=~\autoref{thm:main-strong}]\label{thm:main.new}
  Let $f:X\to B$ be a flat morphism of schemes that is essentially of finite type and
  let $b\in B$ 
  such that $X_{b}$ has \toploccohs over $B$. Then there exists an open neighborhood
  $X_b\subset U\subset X$ such that $\sfh^{-i}(\dcx{U/B})$ is flat over $B$ and
  commutes with base change for each $i\in\bZ$.
\end{thm}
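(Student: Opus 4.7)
The plan is to reduce the statement to a local, Artinian computation and then induct on the length of the Artinian base, combining the liftable local cohomology hypothesis with Grothendieck local duality to control the cohomology sheaves of the relative dualizing complex.

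Since flatness and base-change compatibility of a coherent sheaf are open conditions on $B$ and can be checked stalkwise, I would first reduce to the case where $B = \Spec A$ for $(A, \frm_A, k)$ noetherian local with $b$ the closed point, and work locally at $x \in X_b$ with $R := \cO_{X,x}$. By the local criterion of flatness, both conditions can in turn be tested after base change to each Artinian quotient $A/\frm_A^n$, so I may further assume $A$ is Artinian local. Setting $T := R \otimes_A k$ with maximal ideal $\frn = \frm_R/\frm_A R$, by hypothesis $T$ has \toploccohs over $A$.

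I then induct on $\ell := \length_A(A)$. The case $\ell = 1$ is trivial. For the inductive step, I choose a small extension $0 \to I \to A \to A'' \to 0$ with $\frm_A\cdot I = 0$ and $\dim_k I = 1$. The $A$-flatness of $R$ yields a short exact sequence
\[
0 \longrightarrow T \longrightarrow R \longrightarrow R'' \longrightarrow 0,
\]
where $R'' := R \otimes_A A''$, and by the inductive hypothesis the conclusion of the theorem already holds for $R''$ over $A''$. Applying local cohomology with support in $\frm_R$ produces a long exact sequence
\[
\cdots \to H^i_\frn(T) \to H^i_{\frm_R}(R) \xrightarrow{\gamma_i} H^i_{\frm_R}(R'') \xrightarrow{\partial_i} H^{i+1}_\frn(T) \to \cdots.
\]
The crucial claim is that every connecting map $\partial_i$ vanishes, equivalently that every $\gamma_i$ is surjective. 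Here the hypothesis enters: the liftable local cohomology of $T$, applied to the nilpotent quotients $R \twoheadrightarrow T$ and $R'' \twoheadrightarrow T$, provides compatible surjections $H^i_{\frm_R}(R) \twoheadrightarrow H^i_\frn(T)$ and $H^i_{\frm_R}(R'') \twoheadrightarrow H^i_\frn(T)$. Combined with the structural control on $H^i_{\frm_R}(R'')$ coming from the inductive hypothesis via Grothendieck local duality, a careful diagram chase along the factorization $R \twoheadrightarrow R'' \twoheadrightarrow T$ then forces each $\gamma_i$ to be surjective.

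From the resulting short exact sequences of local cohomology, local duality produces corresponding short exact sequences of cohomology sheaves
\[
0 \to \sfh^{-i}(\dcx{R''/A''}) \to \sfh^{-i}(\dcx{R/A}) \to \sfh^{-i}(\dcx{T/k}) \to 0,
\]
and the desired $A$-flatness and base-change compatibility of $\sfh^{-i}(\dcx{R/A})$ then follow from the inductive hypothesis for $R''/A''$ together with the local criterion of flatness applied to the small extension $A \twoheadrightarrow A''$. The main obstacle I expect is the crucial step $\partial_i = 0$: the liftable local cohomology hypothesis only directly gives surjectivity onto the fiber $T$, so promoting this to a surjection $\gamma_i \colon H^i_{\frm_R}(R) \twoheadrightarrow H^i_{\frm_R}(R'')$ will require the inductive hypothesis, local duality, and an analysis of the $\frm_A$-adic filtration on the local cohomology modules.
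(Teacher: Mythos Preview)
Your overall strategy---reduce to an Artinian local base, pass to local cohomology, and invoke Grothendieck local duality---is exactly the paper's, and your reduction to the Artinian case is fine. However, the proposal has a genuine gap at the local duality step, and a second nontrivial step is glossed over.

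The Matlis dual of $H^i_{\frm_R}(R)$ is $\sfh^{-i}(\dcx R)$, the cohomology of the \emph{absolute} dualizing complex, not of the \emph{relative} one $\dcx{R/A}$. So dualizing your short exact sequence $0\to H^i_{\frn}(T)\to H^i_{\frm_R}(R)\to H^i_{\frm_R}(R'')\to 0$ yields
\[
0\to \sfh^{-i}(\dcx{R''})\to \sfh^{-i}(\dcx{R})\to \sfh^{-i}(\dcx{T})\to 0,
\]
not the sequence you wrote with $\dcx{R/A}$ and $\dcx{R''/A''}$. Since $A$ is Artinian but not assumed Gorenstein, $\omega_A\not\simeq A$ in general, and $\dcx R\simeq\myR\sHom_R(f^*\omega_A,\dcx R)$ differs from $\dcx{R/A}\simeq\myR\sHom_R(f^*A,\dcx R)$ only after a nontrivial twist; your inductive hypothesis, which concerns $\dcx{R''/A''}$, therefore does not match what your argument produces. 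The paper fixes this by filtering $\omega_A$ rather than $A$: the submodules $\omega_{A_j}\hookrightarrow\omega_A$ (for $A_j=A/I_j$) pull back to $f^*\omega_{A_j}$, whose local cohomology dualizes directly to $\sfh^{-i}(\dcx{R_j/A_j})$ by the formula $\dcx{R_j/A_j}\simeq\myR\sHom_R(f^*\omega_{A_j},\dcx R)$. This is not a cosmetic change; it is what makes the induction close. With this correction the short exact sequence actually reads
\[
0\to \sfh^{-i}(\dcx{T})\to \sfh^{-i}(\dcx{R/A})\to \sfh^{-i}(\dcx{R''/A''})\to 0,
\]
with the roles of sub and quotient reversed from what you wrote.

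Second, even granting the corrected sequence, concluding $A$-flatness of $M=\sfh^{-i}(\dcx{R/A})$ from the local criterion requires knowing that the image of $\sfh^{-i}(\dcx T)$ in $M$ equals $I\cdot M$, and that the kernel of multiplication by a generator $t$ of $I$ on $M$ is exactly $\frm_A M$. Neither follows formally from the short exact sequence plus the inductive hypothesis; this is the most delicate part of the paper's argument (their Theorem~4.4(v) and Proposition~4.6), and it hinges on identifying $\Soc\omega_A=I_q\omega_A$ and tracking multiplication by $t_q$ through local duality. Your appeal to ``the local criterion of flatness applied to the small extension'' hides precisely this step.

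Finally, your acknowledged obstacle---surjectivity of $\gamma_i$---is real but has a cleaner resolution than a diagram chase with the inductive hypothesis: apply the liftable local cohomology hypothesis (via the paper's Lemma~3.3) to each successive quotient $I_jR\twoheadrightarrow T$ in a full filtration of $R$, obtaining injectivity of $H^i_x(I_{j+1}R)\hookrightarrow H^i_x(I_jR)$ at every step.
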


\noin
For applications the following consequences are especially important.

\begin{cor}\label{cor:main-db-implies-slc}
  Let $f:X\to B$ be a flat morphism of schemes, essentially of finite type over
  a field $k$. Let $b\in B$ be a point. Assume that
  \begin{enumerate}
  \item\label{item:7} either $\operatorname{char} k=0$ and $X_{b}$ is \DB, e.g.,
    semi-log-canonical,
  \item\label{item:8} or $\operatorname{char} k>0$ and $X_{b}$ is \Fan, e.g., \Fp.
  \end{enumerate}
  Then there exists an open neighborhood $X_b\subset U\subset X$ such that
  $\sfh^{-i}(\dcx{U/B})$ is flat over $B$ and commutes with base change for each
  $i\in \bZ$.
\end{cor}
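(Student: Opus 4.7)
The plan is to deduce the corollary directly from \autoref{thm:main.new}. It suffices to verify in each of the two cases that the fiber $X_b$ has \toploccohs over $B$; the theorem then produces the required open neighborhood $X_b\subset U\subset X$ and gives flatness and base-change-commutation for all $\sfh^{-i}(\dcx{U/B})$. So the entire task reduces to checking the hypothesis.

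First I would handle case (i). A semi-log-canonical singularity is \DB by the Koll\'ar--Kov\'acs theorem, so in either sub-case of (i) the fiber $X_b$ is \DB. By the result stated as \autoref{DB.toploccohs}, \DB singularities have \toploccohs (i.e., over $\bZ$). By \autoref{rem:inheriting-top-loc-cohs} applied to the structure ring map $\bZ\to\cO_{B,b}$ at each point of $X_b$, this upgrades to \toploccohs over $B$, which is what \autoref{thm:main.new} requires.

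For case (ii) I would argue analogously. An \Fp ring is \Fan by the results already cited in the introduction (\cite{MR3271179,ma_quy_2017}), so $X_b$ is \Fan. By \autoref{prop:F-anti-nilp-has-liftable-cohs}, \Fan singularities have \toploccohs over their ground field $k$. Applying \autoref{rem:inheriting-top-loc-cohs} to the composition $k\to\cO_{B,b}$ again yields \toploccohs over $B$.

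With the hypothesis of \autoref{thm:main.new} verified at $b$, its conclusion is precisely the statement of the corollary. The main conceptual content of the argument is thus packaged entirely into the earlier results that \DB and \Fan singularities have \toploccohs; the deduction itself is purely formal, the only nontrivial ingredient being the inheritance property \autoref{rem:inheriting-top-loc-cohs}. I do not expect any step of this reduction to be a genuine obstacle; the real work of the paper sits in \autoref{thm:main.new} and in establishing \toploccohs for the two classes of singularities in question.
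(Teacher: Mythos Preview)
Your proposal is correct and matches the paper's own argument essentially verbatim: the paper simply states that \autoref{thm:main-strong} together with \autoref{prop:loc-coh-surj-1} gives case~\autoref{item:7}, and \autoref{thm:main-strong} together with \autoref{prop:F-anti-nilp-has-liftable-cohs} gives case~\autoref{item:8}. You have merely made explicit the inheritance step (via \autoref{rem:inheriting-top-loc-cohs}, equivalently \autoref{rem:top-loc-cohs-is-hereditary}) and the implications slc $\Rightarrow$ \DB and \Fp $\Rightarrow$ \Fan, which the paper leaves implicit.
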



\autoref{cor:main-db-implies-slc}\autoref{item:7} can be viewed as a generalization
of the following \autoref{cor:cm-defo-implies-cm}\autoref{item:19}, proved in
\cite{MR2629988} for projective morphisms and in \cite{MSS17} in general
(cf.~\cite{KS13}).

\begin{cor}\label{cor:cm-defo-implies-cm}
  Let $(X,x)$ be a local scheme, essentially of finite type over a field and assume
  that
  \begin{enumerate}
  \item\label{item:19} either $\operatorname{char} k=0$ and $X$ is \DB, e.g.,
    semi-log-canonical,
  \item\label{item:20} or $\operatorname{char} k>0$ and $X$ is \Fan, e.g., \Fp.
  \end{enumerate}
  If $(X,x)$ admits a flat deformation whose generic fiber is \CM then $(X,x)$ is
  also \CM.
\end{cor}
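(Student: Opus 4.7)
The plan is to apply \autoref{cor:main-db-implies-slc} to a given deformation and then combine flatness with base change. Let $f\colon\cX\to B$ be the hypothesized flat deformation, essentially of finite type over $k$, with $X=X_b=f^{-1}(b)$ for a point $b\in B$ and with the generic fiber $\cX_\eta$ Cohen-Macaulay. Replacing $B$ by the reduced closure of an irreducible component through both $b$ and $\eta$, we may assume $B$ is integral with generic point $\eta$; since $f$ is flat and $B$ is integral, all fibers of $f$ have the same dimension $n$.

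By \autoref{cor:main-db-implies-slc}, after shrinking $\cX$ to an open neighborhood of $x$, each cohomology sheaf $\sfh^{-i}(\dcx{\cX/B})$ is flat over $B$ and commutes with arbitrary base change. Because $\cX_\eta$ is Cohen-Macaulay of dimension $n$, we have $\sfh^{-i}(\dcx{\cX_\eta})=0$ for every $i<n$; by the base change property this forces $\sfh^{-i}(\dcx{\cX/B})|_{\cX_\eta}=0$ for each $i<n$.

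A $B$-flat coherent sheaf on $\cX$ whose generic-fiber restriction vanishes must itself vanish, since its associated points map to associated points of the integral base $B$ and hence lie over the unique generic point $\eta$, where the sheaf is already known to be zero. Therefore $\sfh^{-i}(\dcx{\cX/B})=0$ for all $i<n$ on the chosen neighborhood. Applying base change once more, this time to the closed fiber, yields $\sfh^{-i}(\dcx{X})=0$ for $i<n$, which is exactly the statement that $(X,x)$ is Cohen-Macaulay.

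The whole argument rests on the flatness and base-change behavior of the cohomology sheaves of the relative dualizing complex, which is the main technical theorem of the paper; the hard part (proving \toploccohs for \DB and \Fan singularities, and then deducing flatness/base change of $\sfh^{-i}(\dcx{\cX/B})$ from \toploccohs) is already carried out upstream, so the deduction of this corollary is a clean application of generic-vanishing-plus-$B$-flatness.
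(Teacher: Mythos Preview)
Your argument is correct and uses the same essential input as the paper, namely \autoref{cor:main-db-implies-slc} (equivalently \autoref{thm:main.new}): flatness over $B$ and base change for the cohomology sheaves $\sfh^{-i}(\dcx{\cX/B})$. The deduction you give---a $B$-flat coherent sheaf with vanishing generic fiber over an integral base vanishes, hence $\sfh^{-i}(\dcx{\cX/B})=0$ for $i<n$, hence by base change $\sfh^{-i}(\dcx{X_b})=0$---is valid and slightly more direct than the paper's route.

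The paper instead deduces \autoref{cor:cm-defo-implies-cm} from the more general \autoref{thm:cm-defo-of-db}, which treats the $S_n$ condition for arbitrary $n$. There the argument goes through \autoref{prop:S_n-via-hi} (characterizing $S_n$ via $\dim\supp\sfh^{-i}(\dcx{Z})$) and \autoref{cm-is-a-closed-prop} (local constancy of these support dimensions forces ``not $S_n$'' to propagate to an open set), combined with the EGA openness of the $S_n$ locus. Your shortcut works cleanly because in the Cohen--Macaulay case the relevant cohomology sheaves vanish outright on the generic fiber, so flatness kills them globally; for general $S_n$ one only controls the dimension of their support, which is why the paper's slightly more elaborate argument is needed there.
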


In many cases this is quite sharp, see \autoref{cone.exmp.5} and
\autoref{thm:cm-defo-of-db} for some stronger versions.  This also gives the
following immediate corollary.

\begin{cor}[(cf.~\autoref{cor:cone-over-abelian-s3})]
  \label{cor:cone-over-abelian-is-not-smoothable}
  Let $X$ be an abelian variety of dimension at least $2$ defined over a field $k$.
  If $\kar k>0$ assume that $X$ is ordinary. Then the cone over an arbitrary
  projective embedding of $X$ is not smoothable.
\end{cor}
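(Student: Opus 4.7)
The plan is to apply \autoref{cor:cm-defo-implies-cm} at the vertex of the cone and derive a contradiction from the failure of the Cohen--Macaulay property. Let $L$ be the very ample line bundle on $X$ defining the given embedding, and let $R=\bigoplus_{n\geq 0}H^0(X,L^n)$ be its section ring, so that the affine cone is $C=\Spec R$ with vertex $\mathfrak{m}=R_+$. If the projective cone were smoothable, localising at the vertex would produce a flat deformation of $R_{\mathfrak{m}}$ whose generic fibre is regular, and in particular Cohen--Macaulay.

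First I would verify that the hypothesis of \autoref{cor:cm-defo-implies-cm} is satisfied at $\mathfrak{m}$. In characteristic zero, since $X$ is smooth with $K_X\cong \sO_X$, a direct discrepancy computation on the blow-up of $C$ at the vertex---whose exceptional divisor is a copy of $X$ with normal bundle $L^{-1}$---shows that $C$ is log canonical, and hence Du~Bois. In positive characteristic, the ordinariness of $X$ provides a Frobenius splitting of $X$; a standard argument propagates this splitting to the section ring $R$, showing that $C$ is $F$-pure, in particular $F$-anti-nilpotent. Thus in either setting \autoref{cor:cm-defo-implies-cm} would force $R$ to be Cohen--Macaulay.

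The contradiction arises because $R$ is in fact \emph{not} Cohen--Macaulay. The standard computation of local cohomology of a saturated section ring yields
\[
H^i_{\mathfrak{m}}(R)_n\;\cong\;H^{i-1}(X,L^n)\qquad\text{for }i\geq 2,\ n\in\bZ,
\]
and at $n=0$ this reads $H^i_{\mathfrak{m}}(R)_0\cong H^{i-1}(X,\sO_X)\cong \bigwedge^{i-1}H^1(X,\sO_X)$, which is nonzero for every $1\leq i-1\leq \dim X$. Taking $i=2$ and using $\dim X\geq 2$, we obtain $\depth_{\mathfrak{m}}R\leq 2<\dim X+1=\dim R$, so $R$ fails to be Cohen--Macaulay, contradicting the conclusion of the previous paragraph.

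The main technical obstacle I anticipate is the verification of the Du~Bois / $F$-pure property of $C$: both are classical consequences of the triviality of $K_X$ combined with (in positive characteristic) the ordinariness assumption, but the written arguments require some care with the polarisation and the compatibility of the Frobenius splitting with the grading of $R$. Once these standard facts are in place, the argument is a short application of \autoref{cor:cm-defo-implies-cm}, combined with the non-vanishing of $H^\ast(X,\sO_X)$ for an abelian variety of dimension at least $2$.
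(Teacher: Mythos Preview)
Your proposal is correct and follows essentially the same route as the paper: verify that the cone has the singularity type needed for \autoref{cor:cm-defo-implies-cm} (Du~Bois in characteristic $0$, $F$-pure in positive characteristic), observe that the cone is not Cohen--Macaulay because $H^i(X,\sO_X)\neq 0$ for $0<i<\dim X$, and conclude. The only cosmetic differences are that the paper checks the Du~Bois property via Kodaira vanishing and the explicit cone criterion in \autoref{cone.exmp.5}, rather than via the log canonical route, and for the $F$-pure statement it simply cites \cite[Lemma~1.1]{MR916481} rather than sketching the Frobenius-splitting argument.
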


Note that a special case of \autoref{cor:cone-over-abelian-is-not-smoothable} over
$\bC$, the non-smoothability of the projective cone over an abelian variety of
dimension at least $2$, was proved in \cite{MR522037}.  See
\autoref{cor:cone-over-abelian-s3} for a stronger version.

\begin{demor}{The organization of the paper}
  In \autoref{sec:examples} we give examples and show some applications of the main
  results. In \autoref{sec:local-cohomology-db} we prove that a \DB local scheme has
  \emph{\toploccohs}. In \autoref{sec:fp-sings} we recall a few basic notions about
  singularities defined by the behaviour of the Frobenius morphism in positive
  characteristic and recall that $F$-anti-nilpotent \sings have \toploccohs over
  their ground field. In \autoref{sec:filtrations} and \autoref{sec:db-families-over}
  we study infinitesimal deformations of schemes with \toploccohs and prove the main
  result for families over Artinian bases. In \autoref{sec:flatness-base-change} we
  prove a rather general flatness and base change criterion, see
  \autoref{thm:generalized-flat-and-base-change}, which may be of independent
  interest and derive \autoref{thm:main.new} as relatively easy consequences of this
  and the results of \autoref{sec:db-families-over}. In
  \autoref{sec:degenerations-cm-db} we prove a criterion for $S_n$ \sings in terms of
  the dualizing complex, see \autoref{prop:S_n-via-hi}, and use this and
  \autoref{thm:main.new} to prove \autoref{cor:cm-defo-implies-cm}.
\end{demor}

\begin{demor}{Dualizing complex and its relatives}\label{def-and-not}
  The \emph{(normalized) dualizing complex} of $X$ is denoted by $\omega_X^\mydot$
  and if $X$ is of pure dimension $n$ the \emph{canonical sheaf} of $X$ is defined as
  $\omega_X\leteq \sfh^{-n}(\omega_X^\mydot)$.  Note that if $X$ is not normal, then
  this is not necessarily the push-forward of the canonical sheaf from the
  non-singular locus.
  
  We will work with \emph{three} closely related, but generally different
  \emph{objects}:
  \begin{itemize}
  \item the dualizing complex; $\dcx {X}$,
  \item the canonical sheaf; $\omega_{X}= \sfh^{-n}(\omega_X^\mydot)$, and
  \item the object defined by $\uldcx X\leteq\myR\sHom_X (\dbcx X, \dcx X)$.
    \quad (See \autoref{sec:local-cohomology-db} for a description of $\dbcx X$).
  \end{itemize}
  Note that one has a natural morphism $\uldcx X\to \dcx X$ dual to
  $\eta: \sO_X\to \dbcx X$.

  For a morphism $f:X\to B$, the \emph{(normalized) relative dualizing complex} of
  $f$ will be denoted by $\dcx{X/B}$ and if $f$ has equidimensional fibers of
  dimension $n$, then the \emph{relative canonical sheaf} of $f$ is
  $\omega_{X/B}\leteq \sfh^{-n}(\dcx{X/B})$.  If $B$ consists of a single (closed)
  point, then these notions reduce to the ones discussed above. For more details on
  relative dualizing complexes see
  \cite[\href{http://stacks.math.columbia.edu/tag/0E2S}{Tag 0E2S}]{stacks-project}
\end{demor}

\begin{ack}
  We would like to thank Johan de Jong for comments and discussions from which
  we have greatly benefited and for supplying several results we needed in
\cite{stacks-project}. We would also like to thank Linquan Ma and Karl Schwede
  for pointing us to \cite[Remark~3.4]{MSS17} and numerous useful discussions about $F$-\sings. 
\end{ack}




\section{Examples}\label{sec:examples}
\subsection{Characteristic zero}
In this section we review rational, \CM, and \DB singularities of cones in
characteristic zero and demonstrate some consequences of the main results.

\begin{exmp}[\emph{Deformations of cones}]\label{cone.exmp.5}
  Let $X$ be a projective variety and $\sL$ an ample line bundle on $X$.  Assume for
  simplicity that $X$ has rational singularities.  Let
  \[
  C_a(X,\sL):=\Spec_k \bigoplus_{r=0}^{\infty} H^0(X, \sL^r)
  \]
  be the affine cone over $X$ with conormal bundle $\sL$ and vertex $v$.  Then the
  singularity $v\in C_a(X,\sL)$ is
  \begin{enumerate}[leftmargin=3.2em,label=(\ref{cone.exmp.5}.\arabic*)]
  \item\label{item:21} rational $\Leftrightarrow$ $H^i(X, \sL^r)=0$ for every
    $i>0, r\geq 0$,
  \item\label{item:22} \CM $\Leftrightarrow$ $H^i(X, \sL^r)=0$ for every
    $\dim X>i>0, r\geq 0$ and
  \item\label{item:23} \DB $\Leftrightarrow$ $H^i(X, \sL^r)=0$ for every $i>0, r> 0$;
  \end{enumerate}
  see \cite[3.11, 3.13]{SingBook} and \cite[2.5]{GK14} for proofs.

  Let $D\subset X$ be an effective divisor with rational singularities such that
  $\sL\simeq \sO_X(D)$.  Set $\sL_D:=\sL|_D$.  There is a natural morphism
  $C_a(D, \sL_D)\to C_a(X,\sL)$ which is an embedding if and only if
  $ H^0(X, \sL^r)\onto H^0(D, \sL^{r}_D)$ is surjective for every $r\geq 0$,
  equivalently, if and only if $H^1(X, \sL^r)\into H^1(X, \sL^{r+1})$ is injective
  for every $r\geq 0$. As in \cite[3.10]{SingBook}, using Serre vanishing we get that
  \begin{enumerate}[resume,leftmargin=3.2em,label=(\ref{cone.exmp.5}.\arabic*)]
  \item $C_a(D, \sL_D)$ is a Cartier divisor of $C_a(X,\sL)$ $\Leftrightarrow$
    $ H^1(X, \sL^{r})=0$ for every $r\geq 0$.
  \end{enumerate}
  If this holds then $C_a(D, \sL_D)$ has a deformation whose generic fiber is
  $X\setminus D$. So $C_a(D, \sL_D)$ is smoothable if $X\setminus D$ is smooth.

  By looking at the cohomology of the sequences
  \setcounter{equation}{4}
  \begin{equation}
    \label{eq:1}
      0\to \sL^{r-1}\to \sL^r\to \sL^r_D\to 0 
  \end{equation}
  we see that
  \begin{enumerate}[leftmargin=3.2em,label=(\ref{cone.exmp.5}.\arabic*)]
    \setcounter{enumi}{5}
    \item $C_a(D,\sL_D)$ is Du~Bois $\Leftrightarrow$
    $H^1(X, \sL^r)\onto H^1(X, \sL^{r+1})$ is surjective for every $r\geq 0$ and
    $H^i(X, \sL^r)=0$ for every $i>1, r\geq 0$.
  \end{enumerate}
  Putting all these together we see that if $C_a(D,\sL_D)$ is Du~Bois and has a flat
  deformation to $X\setminus D$ then $v\in C_a(X,\sL)$ is a rational singularity. In
  particular, $C_a(D,\sL_D)$ is \CM.

  This is actually stronger than \autoref{cor:cm-defo-implies-cm}, but here we also
  assumed that $X\setminus D$ has rational singularities.  See also \cite{KS13} for
  closely related results.
\end{exmp}

By \autoref{cor:cm-defo-implies-cm}, if a local, \DB scheme $(X,x)$ is smoothable,
then it is \CM.  The next example shows that this is close to being optimal for some
cones.

\begin{exmp} 
  Let $(S, H)$ be a polarized K3 surface and set $X:=S\times \bP^2$. Fix $a,b\geq 1$
  and set $\sL(a,b):=\pi_1^*\sO_S(aH)\otimes \pi_2^*\sO_{\bP^2}(b)$ and let
  $D(a,b)\subset X$ be a smooth member of the associated linear system.
  
  The affine cone $C_a\bigl(D(a,b), \sL(a,b)|_{D(a,b)}\bigr)$ is a hyperplane section
  of the cone $C_a\bigl(X, \sL(a,b)\bigr)$, hence smoothable.  It is not \CM since
  $H^2\bigl(D(a,b),\sO_{D(a,b)}\bigr)=1$ and also not \DB since
  $H^1\bigl(D(a,b), \sL(a,b)|_{D(a,b)}\bigr)=1$.

  However, for any $a'>a, b'>b$ the cone
  $C_a\bigl(D(a,b), \sL(a',b')|_{D(a,b)}\bigr)$ is \DB but still not \CM.  Thus the
  cones $C_a\bigl(D(a,b), \sL(a',b')|_{D(a,b)}\bigr)$ are not smoothable.

  More generally, one gets similar examples starting with any smooth variety $X$ for
  which $H^1(X, \sO_X)=0$ but $H^i(X, \sO_X)\neq 0$ for some $2\leq i\leq \dim X-2$.
\end{exmp}

\begin{exmp}[\emph{Singularities of cones I}]\label{cor:cone-over-abelian}
  Let $X$ be a smooth, projective variety such that $K_X\equiv 0$.  Kodaira vanishing
  and (\ref{cone.exmp.5}.2--3) show that $C_a(X, \sL)$ is \DB.  It is \CM iff
  $H^i(X,\sO_X)=0$ for $0<i<\dim X$.  This and \autoref{cor:cm-defo-implies-cm} imply
  that if $C_a(X, \sL)$ is smoothable then $H^i(X,\sO_X)=0$ for $0<i<\dim X$. In
  particular, if $X$ is an abelian variety then $C_a(X,\sL)$ is not smoothable.
  In \autoref{cor:cone-over-abelian-s3} we prove that if $X$ is an abelian variety,
  then $C_a(X,\sL)$ cannot be deformed even to an $S_3$ scheme.
\end{exmp}

\subsection{All characteristics}

\begin{exmp}[\emph{Singularities of cones II}]\label{exmp:sings-of-cones}
  Let us use the notation introduced in \autoref{cone.exmp.5} and first note that
  \autoref{item:21} and \autoref{item:22} remain true in all characteristics:
  \begin{enumerate}[leftmargin=3.2em,label=(\ref{exmp:sings-of-cones}.\arabic*)]
  \item $C_a(X,\sL)$ is rational $\Leftrightarrow$ $H^i(X, \sL^r)=0$ for every
    $i>0, r\geq 0$, and
  \item $C_a(X,\sL)$ is \CM $\Leftrightarrow$ $H^i(X, \sL^r)=0$ for every
    $\dim X>i>0, r\geq 0$.
  \end{enumerate}

  \noin For future reference we add a more sophisticated version of
  \autoref{item:22}:
  \begin{enumerate}[resume,leftmargin=3.2em,label=(\ref{exmp:sings-of-cones}.\arabic*)]
  \item\label{item:24} $C_a(X,\sL)$ is $S_n$ for some $n\in\bN$ $\Leftrightarrow$
    $H^i(X, \sL^r)=0$ for every $n-1>i>0, r\geq 0$.
  \end{enumerate}
  The reader may find a proof, for instance, in \cite[4.3]{MR3123642},
  cf.~\cite[3.11]{SingBook}.
\end{exmp}

\subsection{Positive characteristic}

For the definition of $F$-\sings appearing in this section, please refer to
\autoref{sec:fp-sings}. 

\begin{exmp}\label{exmp:F-pure}
  Let $k$ be a field of characteristic $p>0$ and let $Y$ be the curve coinsisting of
  the three coordinate axes in $\bA^3_k$, i.e., let $Y=\Spec k[x,y,z]/(xy,xz,yz)$.
  Then $Y$ is \Fp by \cite[5.38]{MR0417172} and hence it is also \Fan by
  \cite{MR3271179}.
\end{exmp}

A frequently used way to show that a class of singularities is invariant under small
deformation is to show the following two conditions:
\begin{enumerate}
\item\label{item:13} The class of \sings in question satisfies an \emph{inversion of
    adjunction type} property, i.e., if a Cartier divisor $Y\subseteq X$ belongs to
  this class, then so does $X$.
\item\label{item:14} The class of \sings in question satisfies a \emph{Bertini type}
  property, i.e., if $X$ belongs to this class and $Y\subseteq X$ is a general member
  of a very ample linear system, then $Y$ also belongs to this class.
\end{enumerate}
It is easy to see that these two conditions imply that if in a flat family a fiber
belongs to the given class of \sings, then so do nearby fibers. 

This method indeed proves that $\bQ$-Gorenstein \Fp \sings defined over an
algebraically closed field are invariant under small deformation. Property
\autoref{item:13}, the inversion of adjunction type property, for Gorenstein \Fp
\sings holds by \cite[3.4(2)]{MR701505} and property \autoref{item:14}, the Bertini
type property, at least over an algebraically closed field holds by
\cite{MR3108833}. The $\bQ$-Gorenstein case of \autoref{item:13} can be proved using
\cite[7.2]{MR2587408}.

Similarly, $F$-injective singularities with \toploccohs satisfy \autoref{item:13}
\cite{MR3263925}.

Without the $\bQ$-Gorenstein assumption \Fp \sings do not satisfy \autoref{item:13}
\cite{MR701505,MR1693967}. In contrast, \Fan \sings satisfy \autoref{item:13}
\cite{ma_quy_2017}, but it is not known at the moment whether they satisfy
\autoref{item:14}.

The fact that \Fan \sings satisfy \autoref{item:13}, but \Fp \sings in general do
not, leads to simple examples of \Fan \sings that are not \Fp:

\begin{exmp}\label{exmp:F-anti-nilp}\cite{MR701505,MR1693967,MR3649223,ma_quy_2017}
  Let $X=\Spec k[x,y,z,t]/(xy,xz,y(z-t^2)$ and $Y=(t=0)\subseteq X$. Then
  $Y\simeq \Spec k[x,y,z]/(xy,xz,yz)$ and hence it is \Fp by
  \autoref{exmp:F-pure}. Furthermore, then $X$ is also \Fan by
  \cite[4.2]{ma_quy_2017}, but it is not \Fp by \cite[3.2]{MR1693967}.
\end{exmp}

\begin{exmp}[\emph{Singularities of cones III}]\label{exmp:sing-of-cones-ii}
  We have seen in \autoref{cor:cone-over-abelian} that in characteristic $0$ a cone
  over an abelian variety has \DB \sings. We have a similar statement in positive
  characteristic: Let $X$ be an ordinary abelian variety over a field of positive
  characteristic and $\sL$ an ample line bundle on $X$. Then $C_a(X,\sL)$ has \Fp
  singularities by \cite[Lemma~1.1]{MR916481}
  and hence \Fan \sings by \cite{MR3271179}.
\end{exmp}

\section{Filtrations on modules over Artinian local rings}\label{sec:filtrations}
\noin
We will use the following notation throughout.

\begin{demo-r}{Maximal filtrations}\label{notation}
  Let $(S,\frm, k)$ be an Artinian local ring and $N$ a finite $S$-module with a
  filtration
  $N= N_0\supsetneq N_1\supsetneq \dots \supsetneq N_{q} \supsetneq N_{q+1}=0$ such
  that $\factor {N_{j}}{N_{j+1}} \simeq k$ as $S$-modules for each $j=0,\dots,q$.
  Further let $f:(X,x)\to (\Spec S,\frm)$ be a flat local morphism and denote the
  fiber of $f$ over $\frm$ by $X_{\frm}$. It follows that then for each
  $j=0,\dots,q$,
  \begin{equation}
    \label{eq:4}
    f^*\left( \factor {N_{j}}{N_{j+1}} \right) \simeq \sO_{X_{\frm}}.
  \end{equation}
\end{demo-r}
\begin{demo-r}{Filtering $S$}\label{filt-S}
  In particular, considering $S$ as a module over itself, we choose a filtration of
  $S$ by ideals
  $S=I_0\supsetneq I_1\supsetneq \dots \supsetneq I_{q} \supsetneq I_{q+1}=0$ such that
  $\factor {I_{j}}{I_{j+1}} \simeq k$ as $S$-modules for all $0\leq j\leq q$.
  Observe that in this case $I_{1}=\frm$ and for every $j$ there exists a
  $t_j\in I_j$ such that the composition
  $\xymatrix{S\ar[r]^-{t_j \cdot} & I_j \ar[r] & \factor {I_j}{I_{j+1}}}$ induces an
  isomorphism $\factor S\frm\simeq \factor {I_j}{I_{j+1}}$. In particular,
  $\ann\left(\factor {I_j}{I_{j+1}}\right) =\frm$. Finally, let
  $S_j:=\factor S{I_j}$. Note that $S_1=\factor S\frm$ and $S_{q+1}=S$.
\end{demo-r}
\begin{demo-r}{Filtering $\omega_S$}\label{filt-o}
  Applying Grothendieck duality to the closed embedding given by the surjection
  $S\onto S_j$ implies that $\omega_{S_j} \simeq \Hom_S(S_j, \omega_S)$ and we obtain
  injective $S$-module homomorphisms
  $\varsigma_j: \omega_{S_j} \into \omega_{S_{j+1}}$ induced by the natural
  surjection $S_{j+1}\onto S_j$.  Using the fact that $\omega_S$ is an injective
  $S$-module and applying the functor $\Hom_S(\blank, \omega_S)$ to the short exact
  sequence of $S$-modules
  \[
  \xymatrix{%
    0 \ar[r] & \factor{I_j}{I_{j+1}} \ar[r] & S_{j+1} \ar[r] & S_j \ar[r] & 0,
  }
  \]
  we obtain another short exact sequence of $S$-modules:
  \begin{equation}
    \label{eq:10}
    \xymatrix{%
      0 \ar[r] & \omega_{S_j}
      \ar[r]^-{\varsigma_j} & \omega_{S_{j+1}} \ar[r] & \Hom_S\left(k, 
        \omega_S\right )\simeq k \ar[r] & 0.
    }
  \end{equation}
  Therefore we obtain a filtration of $N=\omega_S$ by the submodules
  $N_j:=\omega_{S_{q+1-j}}$ as in \eqref{notation} where
  $q+1=\length_S(S)=\length_S(\omega_S)$.  The composition of the embeddings in
  \autoref{eq:10} will be denoted by
  $\varsigma:=\varsigma_{q}\circ\dots\circ\varsigma_1 :\omega_{S_1}\into
  \omega_{S_{q+1}}= \omega_{S}$.
\end{demo-r}

Recall that the \emph{socle} of a module $M$ over a local ring $(S,\frm,k)$ is
\begin{equation}
  \label{eq:16}
  \Soc M :=(0:\frm)_M = \{x\in M \mid \frm\cdot x =0 \} \simeq \Hom_S(k, M).
\end{equation}
$\Soc M$ is naturally a $k$-vector space and $\dim_k\Soc\omega_S=1$ by the
definition of the canonical module. In particular, $\Soc\omega_S\simeq k$ and this is
the only $S$-submodule of $\omega_S$ isomorphic to $k$.

\begin{lem}\label{lem:I-times-omega}
  Using the notation from \eqref{filt-S} and \eqref{filt-o}, we have that 
  \begin{equation}
    \label{eq:12}
    \im \varsigma = \Soc \omega_{S} = I_{q} \omega_{S}.
  \end{equation}
\end{lem}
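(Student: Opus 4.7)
The plan is to establish the two equalities separately, using that $\omega_S$ is a faithful injective $S$-module with a one-dimensional socle.

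\textbf{Step 1: $\im\varsigma = \Soc\omega_S$.} Since $S_1 = S/\frm = k$, Grothendieck duality gives $\omega_{S_1} = \Hom_S(k,\omega_S) = \Soc\omega_S$, which is a one-dimensional $k$-vector space by the defining property of the canonical module. The map $\varsigma$ is a composition of injections of $S$-modules, so $\im\varsigma$ is a nonzero $S$-submodule of $\omega_S$ isomorphic to $k$. Since $\Soc\omega_S$ is the \emph{unique} such submodule (as noted just before the lemma), the two must agree.

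\textbf{Step 2: $\Soc\omega_S = I_q\omega_S$.} Because the filtration terminates with $I_{q+1} = 0$ and each successive quotient is killed by $\frm$, one has $\frm\cdot I_q \subseteq I_{q+1} = 0$. Therefore $\frm\cdot(I_q\omega_S) = 0$, i.e.\ $I_q\omega_S \subseteq \Soc\omega_S$. For the reverse inclusion, it suffices to show $I_q\omega_S \neq 0$, since $\Soc\omega_S \simeq k$ is one-dimensional. This follows from the faithfulness of the canonical module: $\Ann_S(\omega_S) = 0$, so $I_q \neq 0$ forces $I_q\omega_S \neq 0$.

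\textbf{Main obstacle.} The argument is short, but the key conceptual input is the uniqueness of the socle as a submodule isomorphic to $k$, together with the fact that $\omega_S$ is faithful. Both facts are standard for the canonical module of an Artinian local ring, so the only real bookkeeping is to check that the composition $\varsigma$ is really the map $\omega_{S_1} \hookrightarrow \omega_S$ coming from the quotient $S \onto S_1 = k$ — this is immediate from the construction in \eqref{filt-o}, since each $\varsigma_j$ is the map dual to $S_{j+1} \onto S_j$ and Grothendieck duality is functorial.
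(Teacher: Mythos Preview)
Your proof is correct and follows essentially the same approach as the paper's: both arguments identify $\im\varsigma$ with $\Soc\omega_S$ via the uniqueness of the one-dimensional socle, and both obtain $I_q\omega_S = \Soc\omega_S$ by noting that $\frm$ annihilates $I_q$ (so $I_q\omega_S\subseteq\Soc\omega_S$) and that $I_q\omega_S\neq 0$ (by faithfulness of the canonical module, which the paper phrases as ``$\omega_S$ is a dualizing sheaf''). Your additional remark verifying that $\varsigma$ is the map dual to $S\twoheadrightarrow k$ is a helpful clarification but not a departure from the paper's argument.
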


\begin{subrem}
  Note that we are not simply stating that these modules in \autoref{eq:12} are
  isomorphic, but that they are equal as submodules of $\omega_{S}$.
\end{subrem}

\begin{proof}
  Since $S_{1}\simeq \factor S\frm\simeq k$, and hence $\omega_{S_{1}}\simeq k$, it
  follows that the image of the embedding $\varsigma :\omega_{S_1}\into \omega_{S}$
  maps $\omega_{S_1}$ isomorphically onto $\Soc \omega_{S}$:
  \begin{equation}
    \label{eq:18}
    \xymatrix{%
      \im \varsigma 
      = \Soc \omega_{S}.
    }
  \end{equation}
  As $\omega_{S}$ is a dualizing sheaf, $I_{q}\omega_{S}\neq 0$, and since
  $I_{q}\simeq \factor S\frm$ it follows that
  \[
  0\neq I_{q}\omega_{S} \subseteq (0:\frm)_{\omega_{S}} =\Soc\omega_{S}\simeq k.
  \]
  Since $k$ is a simple $S$-module, this implies that
  $I_{q}\omega_{S} =\Soc\omega_{S}$ which proves \autoref{eq:12}.
\end{proof}

\section{Families over Artinian local rings}\label{sec:db-families-over}

\noin
We will frequently use the following notation.

\begin{notation}\label{not:top-loc-cohs}
  Let $A$ be a noetherian ring, $(R,\frm)$ a noetherian local $A$-algebra,
  $I\subset R$ a nilpotent ideal and $(T,\frn)\leteq (R/I,\frm/I)$ with natural
  morphism $\alpha:R\onto T$.
\end{notation}

\begin{defini}\label{def:top-loc-cohs-for-schemes}
  Recall from \autoref{def:liftable-cohom} that we say that $(T,\frn)$ has
  \emph{\toploccohs over $A$} if for any $(R,\frm)$ as in \autoref{not:top-loc-cohs},
  the induced homomorphism on local cohomology $H^i_\frm(R) \onto H^i_{\frn}(T)$ is
  surjective for all $i$.
  
  We extend this definition to schemes: Let $(X,x)$ be a local scheme over a
  noetherian ring $A$. Then we say that $(X,x)$ has \emph{\toploccohs over $A$} if
  $\sO_{X,x}$ has \toploccohs over $A$. If $f:X\to B$ is a morphism of 
  schemes then we say that $X$ has \emph{\toploccohs over $B$} if $(X,x)$ has
  \toploccohs over $A$ for each $x\in X$ and for each $\Spec A\subseteq B$ open
  affine neighbourhood of $f(x)\in B$.
\end{defini}

\begin{rem}\label{rem:top-loc-cohs-is-hereditary}
  A simple consequence of the definition is that if $X$ has \toploccohs over a scheme
  $Z$, then for any morphism $g:X\to B$ of $Z$-schemes $X$ has \toploccohs over $B$
  as well. In particular, if $X$ has \toploccohs over a field $k$, then it has
  \toploccohs over any other $k$-scheme to which it admits a map. In addition if
  $\kar k=0$, then $X$ has \toploccohs by \autoref{rem:inheriting-top-loc-cohs}.
\end{rem}

\noin 
Next we need a simple lemma regarding \toploccohs:

\begin{lem}\label{lem:surj-for-R-implies-surj-for-M}\label{loc-coh-surj-2}
  Using \autoref{not:top-loc-cohs} let $M$ be an $R$-module such that there exists a
  surjective morphism $M\onto T$. Assume that the induced natural homomorphism
  $H^i_\frm(R) \onto H^i_\frn(T)$ is surjective for some $i\in\bN$.  Then the induced
  homomorphism on local cohomology
  \begin{equation}
    \label{eq:32}
    \xymatrix{%
      H^i_\frm(M) \ar@{->>}[r] & H^i_\frm(T)\simeq H^i_\frn(T) }
  \end{equation}
  is surjective for the same $i$. In particular, if $(T,\frn)$ has \toploccohs over
  $A$, then the homomorphism in \autoref{eq:32} is surjective for every $i\in\bN$.
\end{lem}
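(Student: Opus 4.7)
The plan is to factor the quotient map $\alpha: R \onto T$ through $M$ and then invoke functoriality of local cohomology. The key observation is that $T$ is a cyclic $R$-module (generated by $1 \in T = R/I$), so any surjection $M \onto T$ of $R$-modules automatically admits a section-like structure at the level of the generator.

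First, I would choose an element $m \in M$ whose image in $T$ equals $1_T$; such an $m$ exists because $M \onto T$ is surjective. The $R$-linear map $\beta: R \to M$, $1 \mapsto m$, then satisfies the factorization
\[
\alpha \;=\; \gamma \circ \beta, \qquad \text{where } \gamma: M \onto T
\]
is the given surjection and $\alpha:R \onto T$ is the quotient. Applying the functor $H^i_\frm(-)$ yields a commutative diagram
\[
\xymatrix{
H^i_\frm(R) \ar[r]^-{\beta_*} \ar@/_1.2pc/[rr]_-{\alpha_*} & H^i_\frm(M) \ar[r]^-{\gamma_*} & H^i_\frm(T).
}
\]
Because $I \subseteq \frm$ and $\frm T = \frn$, the canonical identification $H^i_\frm(T) \simeq H^i_\frn(T)$ holds, and under this identification $\alpha_*$ is precisely the homomorphism $H^i_\frm(R) \to H^i_\frn(T)$ that is surjective by hypothesis.

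Since a surjective composition forces the second factor to be surjective, $\gamma_*: H^i_\frm(M) \onto H^i_\frm(T) \simeq H^i_\frn(T)$ is surjective, which is the desired assertion \eqref{eq:32}. The ``in particular'' clause is then immediate: if $(T,\frn)$ has liftable local cohomology over $A$, the hypothesis on $H^i_\frm(R) \onto H^i_\frn(T)$ holds for every $i$, and the argument above yields the surjectivity of $H^i_\frm(M) \onto H^i_\frn(T)$ for every $i$. There is no real obstacle here — the only subtle point is recognizing that cyclicity of $T$ over $R$ supplies the lift $\beta: R \to M$ needed to factor $\alpha$; everything else is formal functoriality of local cohomology.
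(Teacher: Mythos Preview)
Your proof is correct and is essentially identical to the paper's own argument: both pick a preimage of $1_T$ in $M$ to define $\beta:R\to M$, observe that the quotient $R\onto T$ factors through $M$, and conclude by functoriality of local cohomology. Your notation is in fact slightly cleaner, since you name the surjection $\gamma:M\onto T$ explicitly rather than overloading $\alpha$ as the paper does.
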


\begin{proof}
  Let $t\in M$ be such that $\alpha(t)=1\in T$ and let $\beta: R\to M$ be defined by
  $1\mapsto t$.  Then $\alpha\circ\beta=\alpha :R \to T$ is the natural quotient
  morphism, hence the surjective morphism $H^i_\frm(R) \onto H^i_\frm(T)$ factors
  through $H^i_\frm(M)$ which proves the statement.
\end{proof}

\begin{prop}\label{thm:loc-coh-inj}
  Let $(S,\frm,k)$ be an Artinian local ring and $f:(X,x)\to (\Spec S,\frm)$ a flat
  local morphism. Let $N$ be a finite $S$-module with a filtration as in
  \eqref{notation} and assume that $(X_{\frm},x)$ has \toploccohs over $S$.  Then for
  each $i, j$, the natural sequence of morphisms induced by the embeddings
  $N_{j+1}\into N_j$ forms a short exact sequence,
  \[
  \xymatrix{%
    0 \ar[r] & H^i_x(f^*N_{j+1}) \ar[r] & H^i_x(f^*N_{j}) \ar[r] & H^i_x\left(
      f^*\left(\factor {N_{j}}{N_{j+1}}\right) \right)\simeq
    H^i_x\left(\sO_{X_{\frm}} \right) \ar[r] & 0. }
  \]
\end{prop}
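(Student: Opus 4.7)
The plan is to apply the long exact sequence of local cohomology to the short exact sequence obtained by pulling back the filtration quotient, and then to deduce that the relevant connecting homomorphisms vanish by combining the hypothesis of \toploccohs with \autoref{loc-coh-surj-2}.

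First, since $f$ is flat, $\sO_X$ is a flat $S$-module, so tensoring the short exact sequence $0\to N_{j+1}\to N_j\to N_j/N_{j+1}\to 0$ with $\sO_X$ yields an exact sequence of quasi-coherent sheaves
$$
0\longrightarrow f^*N_{j+1}\longrightarrow f^*N_j\longrightarrow f^*(N_j/N_{j+1})\longrightarrow 0.
$$
By the choice of filtration $N_j/N_{j+1}\simeq k$, so $f^*(N_j/N_{j+1})\simeq \sO_{X_\frm}$ as in \eqref{eq:4}. Taking the long exact sequence of local cohomology at $x$, the claim reduces to showing that
$$
H^i_x(f^*N_j)\longrightarrow H^i_x\bigl(f^*(N_j/N_{j+1})\bigr)\simeq H^i_x(\sO_{X_\frm})
$$
is surjective for every $i$; indeed, the connecting maps then vanish and the long sequence breaks into the desired short exact sequences.

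To establish this surjectivity, set $R:=\sO_{X,x}$ with maximal ideal $\frm_R$, $T:=\sO_{X_\frm,x}$ with maximal ideal $\frn$, and $M_j:=(f^*N_j)_x$, which is an $R$-module. From the preceding short exact sequence we get a surjection $M_j\onto M_j/M_{j+1}\simeq T$. Next, observe that the ideal $I:=\frm R\subseteq R$ is nilpotent, because $\frm$ is nilpotent in the Artinian ring $S$, and that $R/I\simeq T$. Hence the hypothesis that $(X_\frm,x)$ has \toploccohs over $S$ applies to the pair $(R,I)$ and produces the surjection $H^i_{\frm_R}(R)\onto H^i_\frn(T)$ for every $i$.

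Finally, \autoref{loc-coh-surj-2} applied to the surjection $M_j\onto T$ upgrades this to surjectivity of $H^i_{\frm_R}(M_j)\onto H^i_\frn(T)$ for every $i$. Identifying $H^i_{\frm_R}(M_j)=H^i_x(f^*N_j)$ and $H^i_\frn(T)=H^i_x(\sO_{X_\frm})$ completes the argument. The only subtle point is recognising that $I=\frm R$ is a nilpotent ideal of $R$ whose quotient $R/I$ is exactly $T$, which forces the hypothesis on $(X_\frm,x)$ to be directly applicable; this is immediate from the Artinian hypothesis on $S$. Note that this argument works uniformly in $j$ because every successive quotient $M_j/M_{j+1}$ is isomorphic to the same $R$-module $T$, so the same instance of \toploccohs feeds into \autoref{loc-coh-surj-2} every time.
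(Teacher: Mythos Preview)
Your proof is correct and follows essentially the same approach as the paper: both pull back the filtration step to a short exact sequence using flatness, identify the quotient with $\sO_{X_\frm}$ via \autoref{eq:4}, invoke \autoref{loc-coh-surj-2} to obtain surjectivity of $H^i_x(f^*N_j)\to H^i_x(\sO_{X_\frm})$, and conclude that the long exact sequence of local cohomology breaks into short exact sequences. Your write-up is in fact slightly more explicit than the paper's in verifying the hypotheses of \autoref{loc-coh-surj-2}, spelling out that $I=\frm\,\sO_{X,x}$ is nilpotent with quotient $T=\sO_{X_\frm,x}$.
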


\begin{proof}
  Since $\ann\left(\factor {N_{j}}{N_{j+1}}\right) =\frm$, there is a natural
  surjective morphism
  \[
  f^*N_{j}\otimes \sO_{X_{\frm}}\onto f^*\left(\factor {N_{j}}{N_{j+1}}\right).
  \]
  By \autoref{loc-coh-surj-2} and \autoref{eq:4}, the natural homomorphism
  \begin{equation}
    \label{eq:5}
    \xymatrix{%
      H^i_x(f^*N_{j}) \ar@{->>}[r] & H^i_x\left(
        f^*\left(\factor {N_{j}}{N_{j+1}}\right)
      \right)\simeq 
      H^i_x\left(\sO_{X_{\frm}} \right) }
  \end{equation}
  is surjective for all $i$.
  Since $f$ is flat, we have a short exact sequence for every $j>0$:
  \[
  \xymatrix{%
    0 \ar[r] & f^*N_{j+1} \ar[r] & f^*N_{j} \ar[r] &  f^*\left(\factor
      {N_{j}}{N_{j+1}}\right) \ar[r] & 0,
  }
  \]
  and hence the statement follows from \autoref{eq:5}.
\end{proof}

\begin{demo-r}{The exceptional inverse image of the structure
    sheaves}\label{uppershriek}%
  Let $(S,\frm,k)$ be an Artinian local ring with a filtration by ideals as in
  \eqref{filt-S}. Further let $f:X\to \Spec S$ be a flat morphism that is essentially
  of finite type and $f_j=f\resto{X_j}:X_j:=X\times_{\Spec S}\Spec S_j\to \Spec S_j$
  where $S_j=S/I_j$ as defined in \eqref{filt-S}, e.g., $X_{q+1}=X$ and
  $X_{1}=X_{\frm}$, the fiber of $f$ over the closed point of $S$. By a slight abuse
  of notation we will denote $\omega_{\Spec S}$ with $\omega_S$ as well, but it will
  be clear from the context which one is meant at any given time.

  Using the description of the exceptional inverse image functor via the
  residual/dualizing complexes \cite[(3.3.6)]{Conrad00} (cf.\cite[3.4(a)]{RD},
  \cite[\href{http://stacks.math.columbia.edu/tag/0E9L}{Tag 0E9L}]{stacks-project}):
  \begin{equation}
    \label{eq:21}
    f^! = \myR\sHom_X(\myL f^*\myR\sHom_S(\blank, \dcx S), \dcx X) 
  \end{equation}
  and the facts that $S$ is Artinian and $f$ is flat, we have that
  \begin{equation*}
    \dcx{X_j/S_j}\simeq f_j^!\sO_{\Spec S_j}\simeq 
    \myR\sHom_{X_j}(f_j^*\omega_{S_j},    \dcx {X_j}).
  \end{equation*}
  By Grothendieck duality
  \[
  \myR\sHom_{X_j}(f_j^*\omega_ {S_j}, \dcx {X_j}) \simeq
  \myR\sHom_{X}(f_j^*\omega_ {S_j}, \dcx {X}),
  \]
  and as $f_j^*\omega_ {S_j}=f^*\omega_ {S_j}$ and
  $\omega_{S_j}\simeq \Hom_S(S_j, \omega_S)\simeq \myR\sHom_S(\sO_{\Spec S_j}, \dcx
  S)$ we obtain that
  \begin{equation}
    \label{eq:20}
    \dcx{X_j/S_j}\simeq \myR\sHom_X(f^*\omega_ {S_j}, \dcx X)\simeq f^!\sO_{\Spec S_j},
  \end{equation}
  in particular, that
  \begin{equation}
    \label{eq:9}
    \dcx{X_{\frm}}\simeq f^!k \simeq \myR\sHom_X(f^*\Hom_S(k,\omega_ {S}), \dcx X) 
    \simeq \myR\sHom_X(\sO_{X_\frm}, \dcx X).
  \end{equation}
\end{demo-r}

\begin{demo-r}{Natural morphisms of dualizing complexes}\label{nat-morph}%
  We will continue using the notation from \eqref{uppershriek}.  Applying $f^!$ to
  the natural surjective morphism $\xymatrix{S_{j+1} \ar@{->>}[r] & S_j}$ gives a
  natural morphism
  \begin{equation}
    \label{eq:17}
    \xymatrix{%
      \varrho_j: \dcx {X_{j+1}/S_{j+1}} \ar[r] & \dcx {X_{j}/S_{j}}. }
  \end{equation}
  Notice that $\varrho_j$ is Grothendieck dual to $f^*\varsigma_j$ defined in
  \eqref{filt-S}.  Indeed, $\varsigma_j$ is obtained by applying
  $\Hom_S(\blank,\omega_S)$ to the morphism $\xymatrix{S_{j+1} \ar@{->>}[r] & S_j}$,
  and then $\varrho_j$ is obtained by applying $f^*$ and then
  $\myR\sHom_X(\blank,\dcx X)$. Notice further that $\sfh^{-i}(\varrho_j)$ factors
  through the natural base change morphism of \autoref{prop:base-change-map} for each
  $i\in\bZ$.

  The composition of the surjective morphisms $\xymatrix{S_{j+1} \ar@{->>}[r] & S_j}$
  for all $j$ is the natural surjective morphism
  $\xymatrix{S \ar@{->>}[r] & \factor S{\frm} \simeq k}$, and hence the composition
  of the $\varrho_j$'s gives the natural morphism
  \begin{equation}
    \label{eq:19}
    \xymatrix{%
      \varrho:= \varrho_1\circ\dots \circ\varrho_q
      : \dcx {X/S} \ar[r] & \dcx {X_{1}/S_{1}} =\dcx{X_\frm},}
  \end{equation}
  which is then Grothendieck dual to
  $f^*\varsigma:=f^*(\varsigma_q\circ\dots\circ\varsigma_1)$ and
  $\sfh^{-i}(\varrho_j)$ factors through the natural base change morphism of
  \autoref{prop:base-change-map} for each $i\in\bZ$.
\end{demo-r}

\noin
In the rest of this section we will use the following notation and assumptions. 

\begin{assume}\label{ass:DB}
  Let $(S,\frm,k)$ be an Artinian local ring and $f:(X,x)\to (\Spec S,\frm)$ a flat
  local morphism that is essentially of finite type. Assume that $(X_{\frm}, x)$,
  where $X_\frm$ is the fiber of $f$ over the closed point of $\Spec S$, has
  \toploccohs over $S$. Note that by definition $x\in X_{\frm}$ and that we will keep
  using the notation introduced in \autoref{eq:17} and \autoref{eq:19}.
\end{assume}

\begin{thm}\label{thm:surjectivity}
  For each $i,j\in\bN$,
  \begin{enumerate}
  \item\label{item:1} the natural morphism $\xymatrix{%
      \sfh^{-i}(\varrho_j) : \sfh^{-i}(\dcx {X_{j+1}/S_{j+1}}) \ar@{->>}[r] &
      \sfh^{-i}(\dcx {X_{j}/S_{j}}) }$ is surjective,
  \item\label{item:2} the natural morphism $\xymatrix{%
      \sfh^{-i}(\varrho) : \sfh^{-i}(\dcx {X/S}) \ar@{->>}[r] & \sfh^{-i}(\dcx
      {X_{\frm}}) }$ is surjective,
  \item\label{item:3} the natural morphisms induced by $\varrho_j$ form a \ses,
    \[
    \xymatrix@C4em{%
      0 \ar[r] & \sfh^{-i}(\dcx {X_\frm}) \ar[r] & \sfh^{-i}(\dcx {X_{j+1}/S_{j+1}})
      \ar[r]^-{\sfh^{-i}(\varrho_j)} & \sfh^{-i}(\dcx {X_j/S_j}) \ar[r] & 0,}
    \]
  \item\label{item:5}
    $\ker \sfh^{-i}(\varrho_j) = I_{j}\sfh^{-i}(\dcx {X_{j+1}/S_{j+1}}) \simeq
    \factor {I_{j}\sfh^{-i}(\dcx {X/S})}{I_{j+1}\sfh^{-i}(\dcx {X/S})}$,
  \item\label{item:4}
    $\sfh^{-i}(\dcx {X_{j}/S_{j}})\simeq \factor {\sfh^{-i}(\dcx {X/S})}
    {I_{j}\sfh^{-i}(\dcx {X/S})}\simeq {\sfh^{-i}(\dcx {X/S})} \otimes_{\sO_X}
    \sO_{X_j}$, and
  \item\label{item:11} $\ker \sfh^{-i}(\varrho) = \frm\sfh^{-i}(\dcx {X/S})$.
  \end{enumerate}
\end{thm}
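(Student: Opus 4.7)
The plan is to read all six statements off a single distinguished triangle. Applying the exact functor $f^*$ to the short exact sequence $0 \to \omega_{S_j} \to \omega_{S_{j+1}} \to k \to 0$ from \autoref{eq:10} and then $\myR\sHom_X(-, \dcx X)$ yields, via the identifications \autoref{eq:20} and \autoref{eq:9},
\[
\dcx{X_\frm} \longrightarrow \dcx{X_{j+1}/S_{j+1}} \xrightarrow{\ \varrho_j\ } \dcx{X_j/S_j} \xrightarrow{+1}.
\]
All of \autoref{item:1}--\autoref{item:11} will be extracted from the associated long exact sequence in cohomology.

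To prove \autoref{item:1} and \autoref{item:3}, I would show that for every $i$ the natural map $\sfh^{-i}(\dcx{X_\frm}) \to \sfh^{-i}(\dcx{X_{j+1}/S_{j+1}})$ is injective, which forces the connecting maps to vanish and splits the LES into the short exact sequences of \autoref{item:3}. Passing to stalks at $x$ and applying local duality with respect to $\dcx X$, Matlis dualizing converts this injectivity into the surjectivity of
\[
H^i_x\bigl(f^*\omega_{S_{j+1}}\bigr) \twoheadrightarrow H^i_x(f^*k) = H^i_x(\sO_{X_\frm,x})
\]
induced by $f^*\omega_{S_{j+1}} \onto f^*k$. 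By \autoref{loc-coh-surj-2}, this surjectivity reduces to the surjectivity of $H^i_x(\sO_{X,x}) \onto H^i_x(\sO_{X_\frm,x})$, which is the liftable-local-cohomology hypothesis applied to the local $S$-algebra $R = \sO_{X,x}$ with the nilpotent ideal $\frm R$. \autoref{item:2} then follows from \autoref{item:1} by composing the $\varrho_j$'s along $\varrho = \varrho_1 \circ \cdots \circ \varrho_q$.

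For \autoref{item:5}, \autoref{item:4}, and \autoref{item:11}, write $M = \sfh^{-i}(\dcx{X/S})$ and $M_{j+1} = \sfh^{-i}(\dcx{X_{j+1}/S_{j+1}})$. The inclusions $I_j M_{j+1} \subseteq \ker\sfh^{-i}(\varrho_j)$ and $\frm M \subseteq \ker\sfh^{-i}(\varrho)$ are immediate since the targets are annihilated by $I_j$ and $\frm$ respectively. For the reverse inclusions I would argue by descending induction on $j$ using \autoref{item:3}: granting \autoref{item:4} for $j+1$ yields $M_{j+1} \simeq M/I_{j+1}M$, and the task reduces to identifying the image of $\sfh^{-i}(\dcx{X_\frm})$ inside $M_{j+1}$ with $I_j M/I_{j+1}M$. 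The case $j=1$ of \autoref{item:4} is precisely \autoref{item:11}, and \autoref{item:5} is a formal consequence of \autoref{item:4} for $j$ and $j+1$ together.

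The hardest step will be this final identification, which via Matlis duality becomes the assertion
\[
H^i_x(f^*\omega_S)[I_j] \subseteq H^i_x(f^*\omega_{S_j})
\]
as submodules of $H^i_x(f^*\omega_S)$. My plan is to prove this inductively by exploiting the filtration of $H^i_x(f^*\omega_S)$ by the $H^i_x(f^*\omega_{S_k})$ (whose successive quotients are isomorphic to $H^i_x(\sO_{X_\frm,x})$ by \autoref{thm:loc-coh-inj}), together with the compatibility $\frm\cdot H^i_x(f^*\omega_{S_{k+1}}) \subseteq H^i_x(f^*\omega_{S_k})$ that comes from $\frm$ annihilating each graded quotient, and using that $\omega_S$ is the injective hull of $k$ over the Artinian local ring $S$ to force the $I_j$-socle filtration to coincide with the $\omega_{S_*}$-filtration.
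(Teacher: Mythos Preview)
Your approach to \autoref{item:1}--\autoref{item:3} is essentially the paper's: set up the triangle dual to $0\to\omega_{S_j}\to\omega_{S_{j+1}}\to k\to 0$, then use local duality to convert the required injectivity into the local-cohomology surjectivity supplied by the liftable hypothesis via \autoref{loc-coh-surj-2}. (The paper runs this through \autoref{thm:loc-coh-inj} first and then dualizes; you dualize first and then check injectivity. Same content. One technical point you suppress: local duality lands in the completion, so you need faithful flatness of $\widehat{\sO_{X,x}}$ over $\sO_{X,x}$ to descend, exactly as the paper does.)

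Your Matlis-dual reformulation of \autoref{item:5}--\autoref{item:11} as $H^i_x(f^*\omega_S)[I_j]\subseteq H^i_x(f^*\omega_{S_j})$ is correct, and the descending induction on $j$ is the right structure. But your final paragraph is where the actual work lives, and as written it is too vague. The issue is that the equality $\omega_S[I_j]=\omega_{S_j}$ is a statement about $S$-modules; it does \emph{not} formally survive applying $H^i_x(f^*-)$, because taking $I_j$-torsion is a left-exact functor and $H^i_x$ is only a $\delta$-functor. Your ingredients (the filtration, the compatibility $\frm\cdot H^i_x(f^*\omega_{S_{k+1}})\subseteq H^i_x(f^*\omega_{S_k})$, and the injective-hull property) are all true, but they do not by themselves force the two filtrations to agree.

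What closes the gap---and what the paper does---is the following concrete observation for the base case $j=q$. By \autoref{lem:I-times-omega} one has $I_q\omega_S=\Soc\omega_S$, so the composite $\omega_S\xrightarrow{\tau}\omega_S/\omega_{S_q}\simeq\Soc\omega_S\xrightarrow{\varsigma}\omega_S$ is literally multiplication by the generator $t_q$ of $I_q$. Now the liftable-cohomology hypothesis (via \autoref{thm:loc-coh-inj}) makes $H^i_x(f^*\varsigma)$ \emph{injective}; hence $\ker H^i_x(f^*\tau)=\ker(\cdot\, t_q)$ on $H^i_x(f^*\omega_S)$, which is exactly $H^i_x(f^*\omega_S)[I_q]$. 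Since $\ker H^i_x(f^*\tau)=H^i_x(f^*\omega_{S_q})$ by the short exact sequence, the base case follows. Replacing $S$ by $S_{j+1}$ handles the inductive step. Once you make this multiplication-by-$t_q$ identification explicit (and note that it relies on \autoref{lem:I-times-omega} plus the injectivity of $H^i_x(f^*\varsigma)$, not merely on $\omega_S$ being an injective hull), your outline becomes a complete proof, and it coincides with the paper's.
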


\begin{proof}
  Let $N=\omega_S$ and consider the filtration on $N$ given by
  $\omega_{S_j}=N_{{q+1-j}}$, cf.~\eqref{filt-o}, \autoref{eq:10}.
  %
  Further let 
  $(\ )\what{\ }$ denote the completion at $x$ (the closed point of $X$).  Then by
  \autoref{thm:loc-coh-inj}, for all $i, j\in\bN$, there exists a short exact
  sequence
  \begin{equation}
    \label{eq:8}
    \xymatrix{%
      0 \ar[r] & H^i_x(f^*\omega_{S_{j}}) \ar[r] & H^i_x(f^*\omega_{S_{j+1}}) \ar[r] &
      H^i_x\left(f^*\left(\factor {\omega_{S_{j+1}}}{\omega_{S_{j}}}\right)\right)
      \ar[r] & 0. }    
  \end{equation}
  Applying local duality \cite[Corollary~V.6.5]{RD} to \autoref{eq:8} gives the short
  exact sequence
  \[
  \xymatrix@C1.5em{%
    0 \ar[r] & \sExt^{-i}_{X} \left(f^*\left(\factor
        {\omega_{S_{j+1}}}{\omega_{S_{j}}}\right), 
      \dcx{X} \right) \what{\vphantom{\big)}} \ar[r] &
    \sExt^{-i}_{X}(f^*\omega_{S_{j+1}}, \dcx{X} )\ \what{} \ar[r] &
    \sExt^{-i}_{X}(f^*\omega_{S_{j}}, \dcx{X} )\ \what{} \ar[r] & 0. }
  \]
  for all $i, j\in\bN$.  Since completion is faithfully flat
  \cite[\href{http://stacks.math.columbia.edu/tag/00MC}{Tag 00MC}]{stacks-project},
  this implies that there are short exact sequences
  \begin{align}
    \label{eq:11}
    \begin{split}
      \xymatrix{%
        0 \ar[r] & \sExt^{-i}_{X} \left(f^*\left(\factor
            {\omega_{S_{j+1}}}{\omega_{S_{j}}}\right), 
          \dcx{X} \right) \ar[r] & \hskip6em& \hskip6em } \\ \xymatrix{%
        && & \ar[r] & \sExt^{-i}_{X}\left(f^*\omega_{S_{j+1}}, \dcx{X} \right) 
        \ar[r] & \sExt^{-i}_{X}\left(f^*\omega_{S_{j}}, \dcx{X} \right)  \ar[r] &
        0. }
    \end{split}
  \end{align}
  Recall that
  $\sExt^{-i}_{X}\left(f^*\omega_{S_{j}}, \dcx{X} \right)\simeq \sfh^{-i}(\dcx
  {X_{j}/S_{j}})$ for each $i,j$,
  by \autoref{eq:20}. 
  Further observe that the surjective morphism in \autoref{eq:11} is the
  $-i^\text{th}$ cohomology sheaf of the Grothendieck dual of $f^*\varsigma_{j}$ and
  hence via the above isomorphisms, it corresponds to
  $\sfh^{-i}(\varrho_{j})$. Therefore \autoref{eq:11} implies \autoref{item:1}.
  By \autoref{eq:10}
  $f^*\left(\factor {\omega_{S_{j+1}}}{\omega_{S_{j}}}\right)\simeq \sO_{X_\frm}$,
  and hence
  $\sExt^{-i}_{X} \left(f^*\left(\factor {\omega_{S_{j+1}}}{\omega_{S_{j}}}\right),
    \dcx{X} \right)\simeq \sfh^{-i}(\dcx {X_\frm})$,
  so \autoref{eq:11} also implies \autoref{item:3}.  Composing the surjective
  morphisms in \autoref{eq:11} for all $j$ implies that the natural morphism
  \[
  \xymatrix@C4em{%
    \sfh^{-i}(\dcx {X/S})\simeq \sExt^{-i}_{X}\left(f^*\omega_S, \dcx{X} \right)
    \ar@{->>}[r]^-{\sfh^{-i}(\varrho)} & \sExt^{-i}_{X}\left(f^*\omega_{S_q}, \dcx{X}
    \right) \simeq \sfh^{-i}(\dcx {X_{\frm}}) }
  \]
  is surjective and hence \autoref{item:2} follows as well.

  Similarly, composing the injective maps in \autoref{eq:8} for all $j$ shows that
  the embedding $\varsigma: \omega_{S_1}\into \omega_S$ induces an embedding on local
  cohomology:
  \begin{equation}
    \label{eq:15}
    H^i_x(f^*\omega_{S_1}) \subseteq
    H^i_x(f^*\omega_{S}).
  \end{equation}
  Next we prove \autoref{item:5} for $j=q$ first.  Since
  $\sfh^{-i}(\dcx {X_{q}/S_{q}})$ is supported on $X_{q}$ it follows that
  \[
  I_{q}\sfh^{-i}(\dcx {X/S})\subseteq K:= \ker \sfh^{-i}(\varrho_{q})
  \]
  %
  Recall from \eqref{filt-S} that there exists a $t_{q}\in I_{q}$ such that
  $I_{q}=St_{q}\simeq \factor S\frm$ and from \autoref{lem:I-times-omega} that
  $I_{q}\omega_S=\Soc \omega_S$. It follows that for $j=q$ the short exact sequence
  of \autoref{eq:10} takes the form
  \begin{equation}
    \label{eq:6}
    \xymatrix@C3em{%
      0 \ar[r] & \omega_{S_{q}} \ar[r] & \omega_{S} \ar[r]^-{\tau} &
      \Soc\omega_S \ar[r] & 0,  }
  \end{equation}
  where $\tau:\omega_S\onto\Soc\omega_S\subset \omega_S$ may be identified with
  multiplication by $t_{q}$ on $\omega_S$.  Applying $f^*$ and taking local
  cohomology we obtain the short exact sequence
  \begin{equation}
    \label{eq:14}
    \xymatrix@C3em{%
      0 \ar[r] & H^i_x(f^*\omega_{S_{q}}) \ar[r] & H^i_x(f^*\omega_{S})
      \ar[r]^-{H^i_x(\tau)} & H^i_x\left(f^*\Soc\omega_S \right) \ar[r] & 0, }
  \end{equation}
  which is of course just \autoref{eq:8} for $j=q$.  Clearly, 
  the morphism $H^i_x(\tau)$ may also be identified with multiplication by $t_{q}$ on
  $H^i_x(f^*\omega_S)$.  By \autoref{lem:I-times-omega} and \autoref{eq:15}, the
  natural morphism
  $H^i_x(\varsigma): H^i_x\left(f^*\Soc\omega_S \right) =H^i_x(I_qf^*\omega_S) =
  H^i_x(f^*\omega_{S_1}) \to H^i_x(f^*\omega_S)$
  is injective.  Since $H^i_x(\tau)$, i.e., multiplication by $t_q$ on
  $H^i_x(f^*\omega_{S})$, is surjective onto $H^i_x\left(f^*\Soc\omega_S \right)$, it
  follows that
  \begin{equation}
    \label{eq:25}
    \xymatrix@R5em{%
      H^i_x\left(f^*\Soc\omega_S \right) \ar[r]_-{H^i_x(\varsigma)}^-{\simeq} & 
      \im H^i_x(\varsigma)       = I_{q} H^i_x(f^*\omega_{S}) \ar@{^(->}[r] & 
      H^i_x(f^*\omega_{S}),
    }
  \end{equation}
  i.e., $H^i_x\left(f^*\Soc\omega_S \right)$ coincides with
  $I_{q} H^i_x(f^*\omega_{S})$ as submodules of $H^i_x(f^*\omega_{S})$.
  Next let $E$ be an injective hull of $\kappa(x)=\factor{\sO_{X,x}}{\frm_{X,x}}$ and
  consider a morphism $\phi: H^i_x(f^*\Soc\omega_S)\to E$. As $E$ is injective,
  $\phi$ extends to a morphism $\wt\phi: H^i_x(f^*\omega_S)\to E$.  If
  $a\in H^i_x(f^*\omega_S)$, then
  $t_{q}a\in I_{q}H^i_x(f^*\omega_S)=H^i_x\left(f^*\Soc\omega_S \right)$, so
  \[
  t_{q}\wt\phi(a)=\wt\phi(t_{q}a)=\phi(t_{q}a)=\left(\phi\circ
    H^i_x(\tau)\right)(a)
  \]
  Therefore, $\phi\circ H^i_x(\tau)= t_{q}\wt\phi$. 
  Similarly, if $\psi: H^i_x(f^*\omega_S)\to E$ is an arbitrary morphism, then
  setting $\phi=\psi\resto{H^i_x(f^*\Soc\omega_S)}: H^i_x(f^*\Soc\omega_S)\to E$ and
  applying the same computation as above, with $\wt\phi$ replaced by $\psi$, shows
  that $\phi\circ H^i_x(\tau)= t_{q}\psi$.
  It follows that the embedding induced by $H^i_x(\tau)$,
  \begin{equation}
    \label{eq:28}
    \alpha: \Hom_{\sO_{X,x}}(H^i_x(f^*\Soc\omega_S), E)\into
    \Hom_{\sO_{X,x}}(H^i_x(f^*\omega_S), E)
  \end{equation}
  identifies $\Hom_X(H^i_x(f^*\Soc\omega_S), E)$ with
  $I_{q}\Hom_X(H^i_x(f^*\omega_S), E)$. By local duality this implies that 
  \[
  \left(\factor{\ker \left[ \sfh^{-i}(\varrho_{q}): \sfh^{-i}(\dcx {X/S})\onto
        \sfh^{-i}(\dcx {X_{q}/S_{q}}) \right]}{I_{q}\sfh^{-i}(\dcx {X/S})}\right)
  \otimes \what{\sO}_{X,x}=0
  \]
  and hence, since completion is faithfully flat, this implies \autoref{item:5} in
  the case $j=q$.  
  Running through the same argument with $S$ replaced by $S_{j+1}$ gives the equality
  in \autoref{item:5} for all $j$.  In addition, \autoref{item:5} for $j=q$ also
  implies \autoref{item:4} for $j\geq q$.
  Assuming that \autoref{item:4} holds for $j=r+1$ implies the isomorphism in
  \autoref{item:5} for $j=r$. In turn, the entire \autoref{item:5} for $j=r$,
  combined with \autoref{item:4} for $j=r+1$, implies \autoref{item:4} for
  $j=r$. Therefore, \autoref{item:5} and \autoref{item:4} follow by descending
  induction on $j$ and then \autoref{item:11} follows from \autoref{item:5} and the
  definition of $\varrho$.
\end{proof}

\noin
Next we need a simple lemma.

\begin{lem}\label{lem:simple}
  Let $R$ be a ring. $M$ an $R$-module, $t\in R$ and $J=(t)\subseteq R$. Assume that
  $(0:J)_M=(0:J)_R\cdot M$. Then the natural morphism
  $\xymatrix{J\otimes_R M\ar[r]^-\simeq & JM}$ is an isomorphism.
\end{lem}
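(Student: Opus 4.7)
The plan is to use the right-exact sequence coming from the presentation of the principal ideal $J=(t)$ and to identify both sides of the map $J\otimes_R M\to JM$ as quotients of $M$.

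First I would record the standard short exact sequence
\[
0\longrightarrow (0:J)_R\longrightarrow R\xrightarrow{\ \cdot t\ } J\longrightarrow 0,
\]
valid because $J=(t)$ is principal, so that $(0:t)_R=(0:J)_R$. Tensoring with $M$ and using right-exactness gives a surjection $M\twoheadrightarrow J\otimes_R M$ (sending $m\mapsto t\otimes m$) whose kernel is the image of $(0:J)_R\otimes_R M\to M$, namely $(0:J)_R\cdot M$. Hence
\[
J\otimes_R M\ \simeq\ M\big/(0:J)_R\cdot M.
\]

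Next I would factor the natural multiplication map. The composition
\[
M\ \twoheadrightarrow\ J\otimes_R M\ \longrightarrow\ JM,\qquad m\ \longmapsto\ t\otimes m\ \longmapsto\ tm,
\]
is simply multiplication by $t$, which is surjective onto $JM=tM$ with kernel $(0:t)_M=(0:J)_M$. Therefore $JM\simeq M/(0:J)_M$, and the natural morphism in question is identified with the canonical projection
\[
M\big/(0:J)_R\cdot M\ \longrightarrow\ M\big/(0:J)_M.
\]

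Finally I would conclude. The inclusion $(0:J)_R\cdot M\subseteq (0:J)_M$ holds for any module (if $rJ=0$ and $m\in M$ then $J\cdot(rm)=0$), so the displayed projection is always surjective; it is injective precisely when the two submodules coincide. This equality is exactly the hypothesis $(0:J)_M=(0:J)_R\cdot M$, so the map $J\otimes_R M\to JM$ is an isomorphism. There is no real obstacle here; the entire content of the lemma is the bookkeeping that identifies the two sides as the same quotient of $M$ once the hypothesis is in place.
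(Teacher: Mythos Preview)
Your proof is correct and is essentially the same argument as the paper's: both exploit that $J=(t)$ is principal so every element of $J\otimes_R M$ has the form $t\otimes m$, and both reduce injectivity to the equality $(0:J)_R\cdot M=(0:J)_M$. You package this via the right-exact sequence and quotient identifications, whereas the paper argues element-wise, but the content is identical.
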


\begin{proof}
  This natural morphism is always surjective. Suppose $m\in M$ is such that
  $t\otimes m\mapsto 0$ via this morphism. In other words such that $tm=0$. This
  means, by definition, that $m\in (0:J)_M$ and hence by assumption there exist
  $y\in (0:J)_R\subseteq R$ and $m'\in M$ such that $m=ym'$. Then
  $t\otimes m=t\otimes ym'=yt\otimes m'=0$, since $yt=0$. This proves the claim.
\end{proof}

\begin{prop}\label{prop:tensor}
  Using the same notation as above,
  \begin{enumerate}
  \item\label{item:10}
    $I_j\otimes \sfh^{-i}(\dcx {X/S})\simeq I_{j}\sfh^{-i}(\dcx {X/S})$,
  \item\label{item:6} for any $l\in\bN$,
    $\factor{I_j}{I_{j+l}}\otimes \sfh^{-i}(\dcx {X/S})\simeq \factor
    {I_{j}\sfh^{-i}(\dcx {X/S})}{I_{j+l}\sfh^{-i}(\dcx {X/S})}$, and
  \item\label{item:12} for any $l\in\bN$,
    $\factor{\frm^l}{\frm^{l+1}}\otimes \sfh^{-i}(\dcx {X/S})\simeq \factor
    {\frm^l\sfh^{-i}(\dcx {X/S})}{\frm^{l+1}\sfh^{-i}(\dcx {X/S})}$.
  \end{enumerate}
\end{prop}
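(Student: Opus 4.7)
The proof proceeds in three reductions. First, part \autoref{item:12} follows from \autoref{item:6}: we may choose the composition series $\{I_j\}$ to refine the $\frm$-adic filtration so that each $\frm^l$ appears as some $I_{j_l}$, whereupon \autoref{item:12} becomes \autoref{item:6} with $j = j_l$ and $l$ replaced by $j_{l+1} - j_l$. Next, \autoref{item:6} follows from \autoref{item:10}: tensor the short exact sequence $0 \to I_{j+l} \to I_j \to I_j/I_{j+l} \to 0$ with $M := \sfh^{-i}(\dcx{X/S})$; part \autoref{item:10} identifies $I_{j+l} \otimes_S M$ and $I_j \otimes_S M$ with submodules of $M$, which forces the resulting four-term sequence to be short exact and gives the desired isomorphism $(I_j/I_{j+l}) \otimes_S M \simeq I_j M / I_{j+l} M$.

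For \autoref{item:10} we argue by descending induction on $j$, with the base case $j = q+1$ trivial since $I_{q+1} = 0$. For the step from $j+1$ to $j$, consider the commutative diagram
\[
\xymatrix@C1.8em{
0 \ar[r] & I_{j+1} \otimes_S M \ar[d]^{\mu_{j+1}}_{\simeq} \ar[r] & I_j \otimes_S M \ar[d]^{\mu_j} \ar[r] & k \otimes_S M \ar[d]^{\nu_j} \ar[r] & 0 \\
0 \ar[r] & I_{j+1} M \ar[r] & I_j M \ar[r] & I_j M / I_{j+1} M \ar[r] & 0
}
\]
whose top row comes from tensoring $0 \to I_{j+1} \to I_j \to I_j/I_{j+1} \simeq k \to 0$ with $M$. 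The inductive hypothesis makes $\mu_{j+1}$ an isomorphism, which forces $I_{j+1} \otimes_S M \to I_j \otimes_S M$ to be injective (since the composition into $M$ already is), so the top row is in fact short exact. By the five-lemma, $\mu_j$ will be an isomorphism provided the right vertical $\nu_j : M/\frm M \to I_j M / I_{j+1} M$, $\bar m \mapsto t_j m \bmod I_{j+1} M$, is an isomorphism, where $t_j$ is any element of $I_j \setminus I_{j+1}$.

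Surjectivity of $\nu_j$ is immediate from $I_j = S t_j + I_{j+1}$. For injectivity we apply \autoref{lem:simple} with $R := S/I_{j+1}$, the principal ideal $(\bar t_j) \subset R$, and the $R$-module $M/I_{j+1} M$: because $(0:\bar t_j)_R = \frm/I_{j+1}$, the hypothesis of \autoref{lem:simple} becomes $(0:\bar t_j)_{M/I_{j+1} M} = \frm \cdot (M/I_{j+1} M)$, equivalently the assertion that the natural map $\bar m \mapsto t_j m$ from $M/\frm M$ into $M/I_{j+1} M$ is injective. By \autoref{thm:surjectivity}\autoref{item:3} together with \autoref{thm:surjectivity}\autoref{item:4} at $j=1$, one obtains a composition
\[
M/\frm M \;\xrightarrow{\,\sim\,}\; \sfh^{-i}(\dcx{X_\frm}) \;\hookrightarrow\; M/I_{j+1} M,
\]
and the main obstacle is identifying this composition with $\bar m \mapsto t_j m$. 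This will be a Grothendieck-duality chase through the short exact sequence $0 \to \omega_{S_j} \to \omega_{S_{j+1}} \to k \to 0$: the quotient $\omega_{S_{j+1}} \to k$ is the map $w \mapsto t_j w$ landing in $\Soc \omega_S$, and applying $f^*$ followed by $\myR\sHom(\blank,\dcx X)$ transforms this multiplication by $t_j$ into precisely the displayed embedding on $\sfh^{-i}$, which closes the induction.
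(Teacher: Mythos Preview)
Your proof is correct and follows essentially the same strategy as the paper's. Both arguments reduce \autoref{item:6} and \autoref{item:12} to \autoref{item:10}, and both establish \autoref{item:10} by combining \autoref{lem:simple} with the key identification that a certain naturally arising map is multiplication by $t_j$, proved via the Grothendieck-duality description of the short exact sequence $0\to\omega_{S_j}\to\omega_{S_{j+1}}\to k\to 0$.

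The organizational differences are minor. The paper treats the case $j=q$ first (so $M/I_{j+1}M=M$), carries out the duality chase there via the explicit maps $\alpha,\beta$ between $\Hom$-groups, and then inducts by replacing $S$ with $S_q$; you instead run a descending induction on $j$ directly, packaging the inductive step as a five-lemma argument and invoking \autoref{thm:surjectivity}\autoref{item:3} and \autoref{item:4} (which are already available) to supply the needed injection. Your final ``duality chase'' is exactly the paper's $\alpha\circ\beta=t_q\cdot(\blank)$ computation transported to $S_{j+1}$. One small remark: your invocation of \autoref{lem:simple} for the injectivity of $\nu_j$ is harmless but unnecessary---once you know $\bar m\mapsto t_jm$ is injective into $M/I_{j+1}M$, that \emph{is} the injectivity of $\nu_j$, since $\nu_j$ composed with the inclusion $I_jM/I_{j+1}M\hookrightarrow M/I_{j+1}M$ is precisely that map.
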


\begin{proof}
  Notice that since $H^i_x(f^*\Soc\omega_S)$ is both a quotient and a submodule of
  $H^i_x(f^*\omega_S)$, there are two natural maps between
  $\Hom_{\sO_{X,x}}(H^i_x(f^*\Soc\omega_S), E)$ and
  $\Hom_{\sO_{X,x}}( H^i_x(f^*\omega_S), E)$.  Regarding $H^i_x(f^*\Soc\omega_S)$ a
  quotient module via $H^i_x(\tau)$ we get the embedding
  $\alpha=(\blank)\circ H^i_x(\tau)$ in \autoref{eq:28}, and considering it a
  submodule the restriction map
  \begin{equation*}
      \xymatrix@R0em{%
        \beta: \Hom_{\sO_{X,x}}( H^i_x(f^*\omega_S), E) \ar[r] &
        \Hom_{\sO_{X,x}}(H^i_x(f^*\Soc\omega_S), E). \\
        \phi \ar@{|->}[r] & \phi\resto{H^i_x(f^*\Soc\omega_S)} }
  \end{equation*}
  These maps are of course not inverses to each other. In fact, we have already
  established (cf.~\autoref{eq:28}) that
  $\phi\resto{H^i_x(f^*\Soc\omega_S)}\circ H^i_x(\tau)= t_{q}\phi$ and hence the
  composition $\alpha\circ\beta$ is just multiplication by $t_q$:
  \begin{equation}
    \label{eq:24}
    \begin{aligned}
      \xymatrix{%
        & %
        \ar@{}[l]^(.675){}="a" %
        \ar@{} "a";[d]^(.05){}="b"%
        \ar[rd]^(.55){\alpha\circ\beta}
        \phi\in \Hom_{\sO_{X,x}}(H^i_x(f^*\omega_S), E) \ar[r]^-\beta &
        \Hom_{\sO_{X,x}}(H^i_x(f^*\Soc\omega_S), E) \ar[d]^-\alpha_-\simeq \\
        && %
        \ar@{}[l]^(.45){}="c" %
        \ar@{} "c";[u]^(.15){}="d"%
        \ar@/_1em/@{|->} "b";"c" %
        t_q \phi\in I_q\Hom_{\sO_{X,x}}(H^i_x(f^*\omega_S), E). }
    \end{aligned}
  \end{equation}
  This implies, (cf.~\autoref{eq:15} and \autoref{eq:25}), that $\sfh^{-i}(\varrho)$
  may be identified with multiplication by $t_q$ on $\sfh^{-i}(\dcx {X/S})$.
  Together with \autoref{thm:surjectivity}\autoref{item:11} this implies that
  \[
  (0:I_q)_{\sfh^{-i}(\dcx {X/S})} =\ker\sfh^{-i}(\varrho)= \frm \sfh^{-i}(\dcx {X/S})
  = (0:I_q)_S\cdot \sfh^{-i}(\dcx {X/S}),
  \]
  and hence the natural morphism
  \begin{equation}
    \label{eq:26}
    \xymatrix{%
      I_q\otimes_S \sfh^{-i}(\dcx {X/S}) \ar[r]^-\simeq & I_q\sfh^{-i}(\dcx {X/S}) 
    }
  \end{equation}
  is an isomorphism by \autoref{lem:simple}.
  Now assume, by induction, that \autoref{item:10} holds for $S_q$ in place of
  $S$. In particular, keeping in mind that $S_q=\factor S{I_q}$, the natural map
  \begin{equation}
    \label{eq:30}
    \xymatrix{%
      \factor{I_{j}}{I_{q}}\otimes_{S_q} 
      \sfh^{-i}(\dcx {X_q/S_q})\ar[r]^-\simeq & 
      \left(\factor{I_{j}}{I_{q}}\right)\sfh^{-i}(\dcx {X_q/S_q}) 
    }
  \end{equation}
  is an isomorphism for all $j$.
  Consider the \ses (cf.~\autoref{thm:surjectivity}\autoref{item:4}),
  \begin{equation*}
    \xymatrix{%
      0 \ar[r] & I_q \sfh^{-i}(\dcx {X/S}) 
      \ar[r] & \sfh^{-i}(\dcx {X/S}) \ar[r] & 
      \sfh^{-i}(\dcx {X_q/S_q})  \ar[r] &  0
    }
  \end{equation*}
  and apply $\factor{I_{j}}{I_{q}}\otimes_{S} (\blank)$.  The image of
  $\factor{I_{j}}{I_{q}}\otimes_{S} I_q \sfh^{-i}(\dcx {X/S})$ in
  $\factor{I_{j}}{I_{q}}\otimes_{S} \sfh^{-i}(\dcx {X/S})$ is $0$ and hence by
  \autoref{eq:30} the natural map
  \begin{multline*}
    \xymatrix{%
      \boxed{\factor{I_{j}}{I_{q}}\otimes_{S} \sfh^{-i}(\dcx {X/S})} \simeq
      \factor{I_{j}}{I_{q}}\otimes_{S_q} \sfh^{-i}(\dcx {X_q/S_q}) \ar[r]^-\simeq &
      \left(\factor{I_{j}}{I_{q}}\right) \sfh^{-i}(\dcx{X_q/S_q})
      \simeq }\\
    \xymatrix{%
      \ar@{}[r] & \simeq \left(\factor{I_{j}}{I_{q}}\right)
      \factor{\sfh^{-i}(\dcx{X/S})}{I_q \sfh^{-i}(\dcx {X/S})} \simeq
      \boxed{\factor{I_j\sfh^{-i}(\dcx{X/S})}{I_q \sfh^{-i}(\dcx {X/S})}}.  }
  \end{multline*}
  is an isomorphism.  This, combined with \autoref{eq:26} and the 5-lemma, implies
  \autoref{item:10}. Then \autoref{item:6} is a direct consequence of
  \autoref{item:10} and the fact that tensor product is right exact.
  
  Finally, recall, that the choice of filtration in \eqref{filt-S} was fairly
  unrestricted. In particular, we may assume that the filtration $I_\kdot$ of $S$ is
  chosen so that for all $l\in \bN$, there exists a $j(l)$ such that
  $I_{j(l)}=\frm^l$. Applying \autoref{item:6} for this filtration implies
  \autoref{item:12}.
\end{proof}

\noin The following theorem is an easy combination of the results of this section.

\begin{thm}\label{thm:key}
  Let $(S,\frm,k)$ be an Artinian local ring and $f:(X,x)\to \Spec S$ a flat local
  morphism that is essentially of finite type.  If $(X_{\frm},x)$ has \toploccohs
  over $S$, then $\sfh^{-i}(\dcx {X/S})$ is flat over $\Spec S$ for each $i$.
\end{thm}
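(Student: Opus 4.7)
The plan is to deduce flatness directly from \autoref{prop:tensor}\autoref{item:10} by invoking the standard local criterion of flatness over a Noetherian local ring. Concretely, since $(S,\frm,k)$ is Artinian local, an $S$-module $M$ is flat if and only if $\operatorname{Tor}_1^S(k,M)=0$, and the long exact sequence of $\operatorname{Tor}$ coming from $0\to \frm\to S\to k\to 0$ identifies this vanishing with injectivity of the natural multiplication map $\frm\otimes_S M\to M$.

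Applying this stalkwise to $M=\sfh^{-i}(\dcx{X/S})$, the theorem reduces to checking that the natural sheaf morphism
\[
\frm\otimes_S \sfh^{-i}(\dcx{X/S})\longrightarrow \sfh^{-i}(\dcx{X/S})
\]
is injective. But this is precisely \autoref{prop:tensor}\autoref{item:10} with $j=1$: in the construction of the filtration in \eqref{filt-S} one has $I_1=\frm$, and the cited statement asserts that the map in question is an isomorphism onto its image $\frm\,\sfh^{-i}(\dcx{X/S})\subseteq \sfh^{-i}(\dcx{X/S})$, and hence is a fortiori injective.

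No further argument is required at this stage: \autoref{thm:key} is a formal consequence of the material already in place. The genuine work—and the only point at which the \toploccohs hypothesis enters—was carried out in \autoref{thm:surjectivity}, which, via Grothendieck duality applied to the filtration of $\omega_S$ built in \eqref{filt-o}, translated the surjectivity on local cohomology into short exact sequences of the $\sfh^{-i}(\dcx{\bullet/\bullet})$ along the filtration $I_\kdot$; and in \autoref{prop:tensor}, which then identified the associated graded pieces with the corresponding tensor products via the elementary \autoref{lem:simple}. The main obstacle was therefore surmounted in those earlier results, and the proof of \autoref{thm:key} itself is a one-line application of the flatness criterion.
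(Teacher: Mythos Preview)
Your proof is correct and essentially the same as the paper's: both deduce flatness directly from \autoref{prop:tensor} via a local flatness criterion. The only difference is cosmetic---the paper invokes \autoref{prop:tensor}\autoref{item:12} (the $\frm^l/\frm^{l+1}$ form) together with the associated-graded criterion \cite[Tag 0AS8]{stacks-project}, whereas you use \autoref{prop:tensor}\autoref{item:10} with $j=1$ and the $\operatorname{Tor}_1^S(k,-)=0$ form of the local criterion; these are equivalent formulations resting on the same prior work.
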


\begin{proof}
  This follows directly from \autoref{prop:tensor}\autoref{item:12} and
  \cite[\href{http://stacks.math.columbia.edu/tag/0AS8}{Tag 0AS8}]{stacks-project}.
\end{proof}

\section{Flatness and base change}\label{sec:flatness-base-change}
\noindent
In this section we prove a rather general flatness and base change theorem for the
cohomology sheaves of the relative dualizing complex. The main essential assumption
is that the relative dualizing complex exists.

\begin{defnot}\label{def-and-not.2}\label{def:embeddable}
  For morphisms $f:X\to B$ and $\vartheta: Z\to B$, the symbol $X_Z$ will denote
  $X\times_B Z$ and $f_Z:X_Z\to Z$ the induced morphism.  In particular, for $b\in B$
  we write $X_b = f^{-1}(b)$.
  %
  %

  Let $f:X\to B$ be a morphism of locally noetherian schemes. Then $f$ is
  \emph{embeddable into a smooth morphism of dimension $N$} if there exists a smooth
  morphism
  $\pi: P\to B$ of pure relative dimension $N$ over $B$ and a closed embedding
  $\jmath: X\into P$ such that $f=\pi\circ\jmath$.
  Furthermore, $f$ is \emph{locally embeddable into a smooth morphism} if each
  $x\in X$ has a neighbourhood $x\in U_x\subseteq X$ such that $f\resto{U_x}$ is
  {embeddable into a smooth morphism of dimension $N$} for some $N\in\bN$.
  %

  Note that if $f:X\to B$ is a flat morphism that is essentially of finite type then
  it is locally embeddable into a smooth morphism and that if $f$ is flat and locally
  embeddable into a smooth morphism then it admits a relative dualizing complex by
  \cite[\href{http://stacks.math.columbia.edu/tag/0E2X}{Tag 0E2X}]{stacks-project}.
\end{defnot}

\begin{lem}\label{lem:D-duality}
  Let $(B,b)$ be a local scheme and $f:X\to B$ a flat morphism embeddable into a
  smooth morphism $P\to B$ of relative dimension $N$. Then 
  \[
  \sfh^{-i}(\dcx{X/B})\simeq \sExt^{N-i}_{P}(\sO_X,\omega_{P/B}) \quad\text{ and } \quad
  \sfh^{-i}(\dcx{X_b})\simeq \sExt^{N-i}_{P_b}(\sO_{X_b},\omega_{P_b}).
  \]
\end{lem}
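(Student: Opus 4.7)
The plan is to derive both formulas as instances of the same standard identity: for a closed embedding into a smooth morphism of relative dimension $N$, the relative dualizing complex is computed by an $\myR\sHom$ into the relative canonical bundle, shifted by $N$.

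First I would unwind the hypothesis. By assumption we have a factorization $f = \pi \circ \jmath$ where $\jmath: X \hookrightarrow P$ is a closed embedding and $\pi: P \to B$ is smooth of relative dimension $N$. Since $\pi$ is smooth of pure relative dimension $N$, one has $\dcx{P/B} \simeq \omega_{P/B}[N]$ (see e.g.\ \cite[\href{http://stacks.math.columbia.edu/tag/0E9U}{Tag 0E9U}]{stacks-project}, where for a smooth morphism the relative dualizing complex is the top exterior power of the sheaf of relative differentials placed in degree $-N$). Since $f$ is flat and locally embeddable into a smooth morphism, $\dcx{X/B}$ exists in the sense of \cite[\href{http://stacks.math.columbia.edu/tag/0E2X}{Tag 0E2X}]{stacks-project}, and by the transitivity of the upper-shriek functor it is computed as $\jmath^! \dcx{P/B}$.

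Next I would apply the standard description of $\jmath^!$ for a closed embedding (as in \eqref{eq:21} or \cite[\href{http://stacks.math.columbia.edu/tag/0AU3}{Tag 0AU3}]{stacks-project}):
\[
\jmath^! \dcx{P/B} \simeq \myR\sHom_P(\sO_X, \dcx{P/B}) \simeq \myR\sHom_P(\sO_X, \omega_{P/B})[N].
\]
Taking the $(-i)$-th cohomology sheaf on both sides then gives
\[
\sfh^{-i}(\dcx{X/B}) \simeq \sfh^{N-i}\bigl(\myR\sHom_P(\sO_X, \omega_{P/B})\bigr) \simeq \sExt^{N-i}_{P}(\sO_X, \omega_{P/B}),
\]
which is the first asserted isomorphism.

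For the second formula I would simply base change by $\operatorname{Spec}\kappa(b) \to B$. Smoothness and flatness are preserved by base change, so $\pi_b: P_b \to \operatorname{Spec}\kappa(b)$ is smooth of relative dimension $N$, $\omega_{P_b} \simeq \omega_{P/B}\resto{P_b}$, and $\jmath_b: X_b \hookrightarrow P_b$ is a closed embedding factoring the fiber morphism $f_b$. Applying the same argument to $f_b$ in place of $f$ yields the desired isomorphism $\sfh^{-i}(\dcx{X_b}) \simeq \sExt^{N-i}_{P_b}(\sO_{X_b},\omega_{P_b})$. There is essentially no obstacle here: the entire lemma is a bookkeeping exercise in Grothendieck duality, with the only subtlety being to invoke the correct stacks-project references for the existence and smooth-case description of the relative dualizing complex.
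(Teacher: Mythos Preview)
Your proposal is correct and follows essentially the same approach as the paper: identify $\dcx{P/B}\simeq\omega_{P/B}[N]$ for the smooth morphism, use Grothendieck duality for the closed embedding to get $\dcx{X/B}\simeq\myR\sHom_P(\sO_X,\omega_{P/B}[N])$, take cohomology, and repeat over the fiber. The only difference is cosmetic: the paper cites \cite[VII.3.4]{RD} for the duality step where you cite the Stacks Project, and you spell out the transitivity $\dcx{X/B}\simeq\jmath^!\dcx{P/B}$ more explicitly.
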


\begin{proof}
  Since $P/B$ is an $N$-dimensional smooth morphism, $\dcx{P/B}= \omega_{P/B}[N]$ is
  a relative dualizing complex.
  %
  By Grothendieck duality \cite[VII.3.4]{RD},
  \begin{equation*}
    \sfh^{-i}(\dcx{X/B})\simeq 
    \sfh^{-i}(\myR\sHom_{P}(\sO_X,\omega_{P/B}[N])) \simeq
    \sExt^{N-i}_{P}(\sO_X,\omega_{P/B}).
  \end{equation*}
  The same argument implies the equivalent statement for $\sfh^{-i}(\dcx{X_b})$.
\end{proof}

\noindent
The following statement is standard. We include it for ease of reference.

\begin{prop}\label{prop:base-change-map}
  Let $f:X\to B$ be a flat morphism of schemes that admits a relative dualizing
  complex and let $Z\to B$ be a morphism. Then for each $i\in\bZ$ there exists a
  natural base change morphism,
  \[
  \varrho^{-i}_Z: \sfh^{-i}(\dcx{X/B})\otimes_B\sO_Z \longrightarrow
  \sfh^{-i}(\dcx{X_Z/Z}).
  \]
\end{prop}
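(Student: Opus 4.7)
The plan is to obtain the map $\varrho^{-i}_Z$ as the composition of two canonical morphisms: one at the level of derived categories coming from base change for $f^!$, and one from the natural comparison between ordinary pullback of cohomology sheaves and cohomology of the derived pullback.

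\smallskip

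\noindent\textbf{Step 1 (derived base change for the dualizing complex).} Let $g\colon X_Z\to X$ denote the projection and $h\colon Z\to B$ the given morphism, so we have a cartesian square with $f_Z\colon X_Z\to Z$ flat (being a base change of $f$). The relative dualizing complex is characterized up to canonical isomorphism by the upper-shriek formalism, and admits a natural base change morphism
\[
  \beta_Z\colon Lg^*\dcx{X/B}\longrightarrow \dcx{X_Z/Z}
\]
in the derived category of $\sO_{X_Z}$-modules. This is the standard base change map for relative dualizing complexes constructed in \cite[\href{http://stacks.math.columbia.edu/tag/0E2X}{Tag 0E2X}]{stacks-project} (and the related tags 0E2S, 0E2Y, 0E9L); it is the transpose, under the adjunctions $Lg^*\dashv Rg_*$ and $Rf_{Z*}\dashv f_Z^!$, of the natural identification coming from flat base change for $Rf_*$.

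\smallskip

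\noindent\textbf{Step 2 (edge map to cohomology of the derived pullback).} For any bounded-above complex $C^\bullet$ of quasi-coherent $\sO_X$-modules there is a natural edge morphism
\[
  g^*\sfh^{-i}(C^\bullet)\longrightarrow \sfh^{-i}(Lg^*C^\bullet),
\]
arising either as the bottom edge of the hyper-Tor spectral sequence
$E_2^{p,q}=L^{-p}g^*\sfh^{q}(C^\bullet)\Rightarrow \sfh^{p+q}(Lg^*C^\bullet)$, or more concretely as the sheafified version of the canonical map from $\otimes_{\sO_X}\sO_{X_Z}$ into the derived tensor product, applied to $\sfh^{-i}(C^\bullet)$. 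Apply this to $C^\bullet=\dcx{X/B}$.

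\smallskip

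\noindent\textbf{Step 3 (composition and notation).} Define $\varrho^{-i}_Z$ to be the composition
\[
  \sfh^{-i}(\dcx{X/B})\otimes_B\sO_Z\;\longrightarrow\;\sfh^{-i}\bigl(Lg^*\dcx{X/B}\bigr)\;\xrightarrow{\;\sfh^{-i}(\beta_Z)\;}\;\sfh^{-i}(\dcx{X_Z/Z}),
\]
where the source is identified with $g^*\sfh^{-i}(\dcx{X/B})$ (which is the meaning of the shorthand $(\blank)\otimes_B\sO_Z$ used in \autoref{def-and-not}). Both maps are canonical and compatible with further base change along $Z'\to Z$, so $\varrho^{-i}_Z$ is natural in $Z$.

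\smallskip

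\noindent\textbf{Remarks on obstacles.} There is no substantive obstruction: the existence of $\beta_Z$ is provided by the Stacks Project in the level of generality we need (flat $f$ admitting a relative dualizing complex, arbitrary $Z\to B$), and the edge map in Step 2 is formal. The only thing to verify is that the resulting $\varrho^{-i}_Z$ agrees with whatever ad hoc construction the reader might have in mind; but since both $\beta_Z$ and the edge map are universal among morphisms of their respective types, this agreement is automatic. Note that we are \emph{not} claiming $\varrho^{-i}_Z$ is an isomorphism, only that it exists and is natural; the isomorphism question is exactly what the main theorems of the paper address.
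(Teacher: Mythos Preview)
Your proposal is correct and follows essentially the same route as the paper: the paper uses the natural edge map $\sfh^{i}(\cmx\sfA)\otimes\sfM\to\sfh^{i}(\cmx\sfA\lotimes\sfM)$ (your Step~2) and then invokes the base-change isomorphism $\dcx{X/B}\lotimes_B\sO_Z\simeq\dcx{X_Z/Z}$ from \cite[Tag~0E2Y]{stacks-project} (your Step~1, where in fact $\beta_Z$ is an isomorphism, not just a morphism). The only cosmetic difference is that the paper compresses your three steps into two sentences and writes $\dcx{X/B}\lotimes_B\sO_Z$ rather than $Lg^*\dcx{X/B}$, which are the same object since $f$ is flat.
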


\begin{proof}
  For any complex $\cmx\sfA$, tensoring with an object induces a natural morphism,
  \[
  \sfh^i(\cmx\sfA)\otimes \sfM \longrightarrow \sfh^i(\cmx\sfA\lotimes \sfM).
  \]
  Applying this to the dualizing complex gives a natural map
  \[
  \varrho^{-i}_Z: \sfh^{-i}(\dcx{X/B})\otimes_B\sO_Z \longrightarrow
  \sfh^{-i}\big(\dcx{X/B}\lotimes_B\sO_Z\big).
  \]
  But $\dcx{X/B}\lotimes_B\sO_Z\simeq \dcx{X_Z/Z}$ by the base change property of
  dualizing complexes \cite[\href{http://stacks.math.columbia.edu/tag/0E2Y}{Tag
    0E2Y}]{stacks-project}, so the statement follows.
\end{proof}

\begin{terminology}\label{terminology:base-change}
  Let $f:X\to B$ be a flat morphism of schemes and $\vartheta:Z\to B$ a
  morphism. Then for an $i\in\bZ$, we will say that $\sfh^{-i}(\dcx{X/B})$
  \emph{commutes with base change to $Z$} if the natural base change morphism
  $\varrho_Z^{-i}$ of \autoref{prop:base-change-map} is an isomorphism.
\end{terminology}

\begin{rem}\label{rem:commuting-is-local}
  Since the base change morphism is defined naturally, it can be checked locally
  whether it is an isomorphism. In other words, if $\sfh^{-i}(\dcx{X/B})$ commutes
  with base change to $Z$ locally on $X$, then it commutes with base change to $Z$.
\end{rem}

\begin{rem}    
  A simple case when the condition in \eqref{terminology:base-change} holds is if
  $f:X\to B$ has \CM fibers. In that case the only non-zero cohomology sheaf of the
  relative dualizing complex is $\sfh^{-m}(\dcx{X/B})\simeq \omega_{X/B}$ where
  $m=\dim X-\dim B$ by \cite[3.5.1]{Conrad00} and it commutes with base change by
  \cite[3.6.1]{Conrad00}.  In moduli theory typically one has to deal with
  non-Cohen-Macaulay fibers. The next example shows that for these not even
  $\omega_{X/B}$ commutes with base change.  However, we see in
  \autoref{lem:completion} that the $\sfh^{-i}(\dcx{X/B})$ commute with inverse
  limits.
\end{rem}

\begin{exmp}\label{exmp:not-commuting-with-base-change}
  Let $Y$ be a normal quasi-projective threefold with isolated singularities and a
  trivial canonical divisor. Assume that $Y$ 
  is $S_2$, but not $S_3$. For instance, a non-\CM normal threefold such as a cone
  over an abelian surface in characteristic $0$ satisfies these conditions
  cf.~\autoref{cor:cone-over-abelian}. Consider a general projection of $Y$ to a line
  and resolve the indeterminacies of the projection map. Let $X$ denote the blow-up
  of $Y$ on which this rational map becomes a morphism and let $\pi:X\to \bA^1$
  denote the resulting morphism. Note that since the projection was general we may
  assume that the birational morphism $X\to Y$ is locally isomorphic near their
  singular points. In particular, we may assume that $X$ is a normal affine threefold
  with isolated singularities and a trivial canonical divisor, which is $S_2$, but
  not $S_3$.  Observe that then
  $\sfh^{-2}(\dcx{X/\bA^1})\simeq \omega_{X/\bA^1}\simeq \sO_X$ by construction. Next
  let $z\in\bA^1$ be the image of a non-$S_3$ point of $X$. Then $X_z$ and hence
  $\sO_{X_z}$ is not $S_2$, since otherwise $X$ would be $S_3$ along $X_z$. At the
  same time $\sfh^{-2}(\dcx{X_z/\{z\}})\simeq\omega_{X_z}$ is an $S_2$ sheaf
  (cf.\cite[5.69]{KM98}, \cite[3.7.5]{Kovacs17b}) and hence cannot be isomorphic to
  $\sO_{X_z}$. This implies that $\sfh^{-2}(\dcx{X/\bA^1})$ does not commute with
  base change for the morphism $\pi:X\to \bA^1$.
\end{exmp}

\begin{notation}\label{not:mod-out-by-m-to-the-q}
  Let $f:(X,x)\to (B,b)=(\Spec S,\frm)$ be a local morphism.  Let $q\in\bN$,
  $S_q:=S/\frm^q$, $\frm_q=\frm/\frm^q$ its (unique) maximal ideal, $B_q=\Spec S_q$,
  $X_q:=X\times_BB_q$, and $f_q:(X_q,x)\to (B_q,b)$ the induced local morphism.
  Further let $\what B:=\Spec (\invlim S_q)$, the completion of $B$ at $b$ and
  $\what X:=X\times _B\what B$.
\end{notation}



\begin{lem}\label{lem:completion}
  Let $f:(X,x)\to (B,b)$ be a flat local morphism that admits a relative dualizing
  complex. Fix an $i\in\bZ$ and assume that the inverse system
  $\left(\sfh^{-i-1} (\dcx{X_q/B_q})\right)$ satisfies the Mittag-Leffler condition
  \cite[\href{http://stacks.math.columbia.edu/tag/0595}{Tag 0595}]{stacks-project}.
  Then the natural base change morphism (cf.~\autoref{prop:base-change-map}) induces
  an isomorphism:
  \[
  \invlim\left(\sfh^{-i}(\dcx{X/B})\otimes_X\sO_{X_q}\right)
  \overset\simeq\longrightarrow
  \invlim\sfh^{-i}(\dcx{X_q/B_q}), 
  \]
\end{lem}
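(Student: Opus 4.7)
The plan is to reduce the statement to a local embedding of $f$ into a smooth morphism, recast both sides via an $\sO_B$-flat complex of coherent sheaves on the ambient smooth scheme, and then combine the Milnor short exact sequence for inverse systems of complexes with the exactness of formal completion on coherent modules. Since the claim is local on $X$ by \autoref{rem:commuting-is-local}, and since admitting a relative dualizing complex is arranged in the paper's setting via local embeddability into a smooth morphism (cf.\ \autoref{def:embeddable}), I would first factor $f$ locally as $X\hookrightarrow P\xrightarrow{\pi} B$ with $\pi$ smooth of pure relative dimension $N$ and pick a resolution $\sE^\cdot\to\sO_X$ by $\sO_P$-locally free sheaves. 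Set $\sC^\cdot:=\sHom_{\sO_P}(\sE^\cdot,\omega_{P/B})$: smoothness of $\pi$ and local freeness of the $\sE^n$ make each $\sC^n$ a coherent $\sO_B$-flat sheaf on $P$. Flatness of $f$ ensures $\sE^\cdot\otimes_{\sO_B}\sO_{B_q}\to\sO_{X_q}$ is still a resolution on $P_q$, and base change for the smooth $\pi$ gives $\omega_{P_q/B_q}\simeq\omega_{P/B}\otimes_{\sO_B}\sO_{B_q}$, so \autoref{lem:D-duality} identifies
\[
\sfh^{-i}(\dcx{X/B})\simeq\sfh^{N-i}(\sC^\cdot),\qquad \sfh^{-i}(\dcx{X_q/B_q})\simeq\sfh^{N-i}(\sC^\cdot\otimes_{\sO_B}\sO_{B_q}),
\]
and converts $\varrho^{-i}_{B_q}$ into the standard universal-coefficient comparison $\sfh^{N-i}(\sC^\cdot)\otimes_{\sO_B}\sO_{B_q}\to\sfh^{N-i}(\sC^\cdot\otimes_{\sO_B}\sO_{B_q})$.

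Next I would introduce the formal completion $\what{\sC^\cdot}:=\invlim_q\bigl(\sC^\cdot\otimes_{\sO_B}\sO_{B_q}\bigr)$. Because the transition maps $\sC^\cdot\otimes\sO_{B_{q+1}}\twoheadrightarrow\sC^\cdot\otimes\sO_{B_q}$ are termwise surjective, $\mathbf{R}\invlim$ coincides with $\invlim$ on this system, and the Milnor short exact sequence for an inverse system of complexes yields
\[
0\to R^1\invlim\sfh^{N-i-1}(\sC^\cdot\otimes\sO_{B_q})\to\sfh^{N-i}(\what{\sC^\cdot})\to\invlim\sfh^{N-i}(\sC^\cdot\otimes\sO_{B_q})\to 0.
\]
The Mittag-Leffler hypothesis on $\bigl(\sfh^{-i-1}(\dcx{X_q/B_q})\bigr)=\bigl(\sfh^{N-i-1}(\sC^\cdot\otimes\sO_{B_q})\bigr)$ kills the $R^1\invlim$ term, producing $\sfh^{N-i}(\what{\sC^\cdot})\simeq\invlim\sfh^{-i}(\dcx{X_q/B_q})$, i.e.\ the right-hand side of the claim.

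For the left-hand side, stalkwise at $x$ each $\sC^n_x$ is a coherent module over the noetherian local ring $\sO_{P,x}$, so $\invlim_q\bigl(\sC^n_x\otimes_{\sO_B}\sO_{B_q}\bigr)=\invlim_q\sC^n_x/\frm^q\sC^n_x=\sC^n_x\otimes_{\sO_B}\what{\sO_B}$, and flatness of $\sO_B\to\what{\sO_B}$ guarantees that cohomology commutes with this completion, giving $\sfh^{N-i}(\what{\sC^\cdot})\simeq\sfh^{N-i}(\sC^\cdot)\otimes_{\sO_B}\what{\sO_B}$. That same flatness together with coherence of $\sfh^{N-i}(\sC^\cdot)$ identifies this with $\invlim_q\bigl(\sfh^{N-i}(\sC^\cdot)\otimes_{\sO_B}\sO_{B_q}\bigr)=\invlim_q\bigl(\sfh^{-i}(\dcx{X/B})\otimes_{\sO_X}\sO_{X_q}\bigr)$, the left-hand side; a naturality check then shows that the composite isomorphism is exactly the map induced by $\varrho^{-i}_{B_q}$. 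The crux is the exactness of $(-)\otimes_{\sO_B}\what{\sO_B}$ on coherent sheaves, which is where the noetherian hypothesis is essential, and the main technical obstacle is the bookkeeping required to verify that the Milnor sequence for the complex and the stalkwise completion of the cohomology compute the same object.
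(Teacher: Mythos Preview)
Your overall strategy is sound and runs parallel to the paper's: both arguments hinge on the Milnor-type short exact sequence relating $\sfh^{-i}$ of the derived limit to $\invlim\sfh^{-i}$ and $R^1\!\invlim\sfh^{-i-1}$, kill the $R^1\!\invlim$ term via the Mittag--Leffler hypothesis, and then identify the derived limit with a completion of $\dcx{X/B}$. The paper does this abstractly via derived completion and Stacks Project tags (0CQE, 0BKH, 0A06), whereas you make it concrete by embedding into a smooth $P\to B$ and working with the explicit complex $\sC^\cdot$. Your version has the virtue of being self-contained and transparent about where the Mittag--Leffler condition is used.

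There is, however, a real slip in your left-hand-side computation. You write
\[
\invlim_q \sC^n_x/\frm^q\sC^n_x \;=\; \sC^n_x\otimes_{\sO_B}\what{\sO_B},
\]
justified by coherence of $\sC^n_x$ over $\sO_{P,x}$. But $\sC^n_x$ is \emph{not} finitely generated over $\sO_{B,b}$, so this equality fails in general: the $\frm$-adic completion of a module that is finite over $\sO_{P,x}$ but not over $\sO_{B,b}$ is $\sC^n_x\otimes_{\sO_{P,x}}\widehat{\sO_{P,x}}^{\,\frm\sO_{P,x}}$, and $\sO_{P,x}\otimes_{\sO_B}\what{\sO_B}$ is typically strictly smaller than $\widehat{\sO_{P,x}}^{\,\frm\sO_{P,x}}$. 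The paper flags precisely this pitfall in the remark immediately following its proof: one must view the derived limit as a completion over $X$ (equivalently over $P$), not over $B$, because the cohomology sheaves are finite over $\sO_{X,x}$ but not over $\sO_{B,b}$.

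The fix is painless: replace $(-)\otimes_{\sO_B}\what{\sO_B}$ throughout that paragraph by $(-)\otimes_{\sO_{P,x}}\widehat{\sO_{P,x}}^{\,\frm\sO_{P,x}}$. Since $\sO_{P,x}$ is noetherian, this completion is flat over $\sO_{P,x}$, so cohomology still commutes with it; and since $\sfh^{N-i}(\sC^\cdot)=\sfh^{-i}(\dcx{X/B})$ is finitely generated over $\sO_{P,x}$ (indeed over $\sO_{X,x}$), its $\frm\sO_{P,x}$-adic completion is exactly $\invlim_q\bigl(\sfh^{-i}(\dcx{X/B})\otimes_X\sO_{X_q}\bigr)$, which is what you want. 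With that correction your argument goes through.
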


\begin{rem}
  If the local scheme $(X_b,x)$ has \toploccohs over $B$, then the inverse system
  $\left(\sfh^{-i-1} (\dcx{X_q/B_q})\right)$ satisfies the Mittag-Leffler condition
  by \autoref{thm:surjectivity}\autoref{item:1}.
\end{rem}

\begin{proof}
  By the base change property of dualizing complexes
  \cite[\href{http://stacks.math.columbia.edu/tag/0E2Y}{Tag 0E2Y}]{stacks-project}
  there exist natural restricting morphisms,
  \[
  \dcx{X_{q+1}/B_{q+1}}\longrightarrow \dcx{X_{q+1}/B_{q+1}}
  \lotimes_{X_{q+1}}\sO_{X_q} \simeq \dcx{X_q/B_q},
  \]
  so $(\dcx{X_q/B_q})$ forms an inverse system in $D^b(X)$ and hence
  $\myR\lim\dcx{X_q/B_q}$, the derived limit of the inverse system $(\dcx{X_q/B_q})$,
  exists \cite[\href{http://stacks.math.columbia.edu/tag/0CQD}{Tag
    0CQD}]{stacks-project}.  
  Since 
  the inverse system $\left(\sfh^{-i-1} (\dcx{X_q/B_q})\right)$ satisfies the
  Mittag-Leffler condition, $\myR^1\lim \sfh^{-i-1} (\dcx{X_q/B_q})=0$ by
  \cite[\href{http://stacks.math.columbia.edu/tag/091D}{Tag
    091D(3)}]{stacks-project}.  Combined with
  \cite[\href{http://stacks.math.columbia.edu/tag/0CQE}{Tag 0CQE}]{stacks-project}
  this implies that the natural base change morphism of
  \autoref{prop:base-change-map} induces an isomorphism
  \[
  \sfh^{-i}(\myR\lim \dcx{X_q/B_q}) \overset\simeq\longrightarrow \invlim\sfh^{-i}
  (\dcx{X_q/B_q}).
  \]
  The base change property of dualizing complexes
  also applies to $\dcx{X/B}$ and hence the natural restricting morphisms induce
  isomorphisms,
  \[
  \dcx{X/B} 
  \lotimes_{X}\sO_{X_q} \simeq \dcx{X_q/B_q}.
  \]
  Then the derived completion of $\dcx{X/B}$ with respect to the ideal
  $\sJ:=f^*\frm_{B,b}=\sI_{X_b\subseteq X}\subseteq \sO_{X,x}$
  \cite[\href{http://stacks.math.columbia.edu/tag/0BKH}{Tag 0BKH}]{stacks-project} is
  isomorphic to $\myR\lim\dcx{X_q/B_q}$ constructed above.  Then the statement
  follows by \cite[\href{http://stacks.math.columbia.edu/tag/0A06}{Tag 0A06}%
  ]{stacks-project}. 
\end{proof}

\begin{rem}
  In the proof above it is important to consider the derived limit
  $\myR\lim\dcx{X_q/B_q}$ as a derived completion over $X$ and not over $B$, because
  for the cited results the $\sfh^{-i}(\dcx{X/B})$ need to be finite modules.  They
  are finite over $\sO_{X,x}$ but not necessarily over $\sO_{B,b}$.
\end{rem}

\noindent
Next we prove our main flatness and base change statement.

\begin{thm}\label{thm:generalized-flat-and-base-change}
  Let $X\to B$ be a flat morphism locally embeddable into a smooth morphism.
  Fix an $i\in\bZ$ and assume that for any Artinian scheme $Z$ and morphism $Z\to B$,
  the sheaf $\sfh^{-i}(\dcx{X_Z/Z})$ is flat over $Z$ and commutes with any 
  base change to a closed subscheme of $Z$. Then $\sfh^{-i}(\dcx{X/B})$ is flat over
  $B$ and commutes with arbitrary base change.
\end{thm}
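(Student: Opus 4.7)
The plan is to reduce to the local Noetherian case on $B$ and then lift the Artinian hypothesis to the general base via a completion argument. First, since both flatness and commuting with base change are local conditions on $X$ (cf.~\autoref{rem:commuting-is-local}), I would shrink $X$ and assume $f\colon X\to B$ is embeddable into a smooth morphism (\autoref{def:embeddable}). Since these conditions can also be checked on $B$ locally, I would further reduce to $B=\Spec R$ Noetherian local with maximal ideal $\frm$ and closed point $b$. Set $M:=\sfh^{-i}(\dcx{X/B})$, $B_q:=\Spec R/\frm^q$, $X_q:=X\times_B B_q$, and $N_q:=\sfh^{-i}(\dcx{X_q/B_q})$.

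The central technical step would be to show that the base change morphism of \autoref{prop:base-change-map}
\[
\varrho^{-i}_{B_q}\colon M\otimes_R R/\frm^q\longrightarrow N_q
\]
is an isomorphism for every $q\ge 1$. By hypothesis each $N_q$ is flat over $R/\frm^q$, and the transition $N_{q+1}\otimes_{R/\frm^{q+1}} R/\frm^q\to N_q$ is an isomorphism (it is the base change to the closed subscheme $B_q\subset B_{q+1}$ supplied by the hypothesis). In particular the system $(N_q)$ has surjective transition maps and thus satisfies the Mittag-Leffler condition. \autoref{lem:completion} would then identify the $\frm$-adic completion of $M$ with $\invlim N_q$, while the standard description of the inverse limit of a compatible system of flat modules over $R/\frm^q$ (cf.~\cite[\href{https://stacks.math.columbia.edu/tag/0AS8}{Tag 0AS8}]{stacks-project}) shows $(\invlim N_q)\otimes_{\what R} R/\frm^q\simeq N_q$. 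Reducing the completion identification modulo $\frm^q$ then yields the desired isomorphism $\varrho^{-i}_{B_q}$.

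Flatness of $M$ over $R$ would then follow from the local criterion of flatness at each stalk $x\in X_b$. The ring $\sO_{X,x}$ is Noetherian local, flat over $R$ with $\frm\sO_{X,x}\subseteq\frm_{X,x}$, and $M_x$ is finite over $\sO_{X,x}$, so by \cite[\href{https://stacks.math.columbia.edu/tag/00MK}{Tag 00MK}]{stacks-project} it suffices to show $\operatorname{Tor}_1^R(M_x,R/\frm)=0$. Tensoring the short exact sequence $0\to\frm/\frm^2\to R/\frm^2\to R/\frm\to 0$ with $M_x$ and using that $M_x\otimes_R R/\frm^2\simeq(N_2)_x$ is flat over $R/\frm^2$ (by the previous step combined with the hypothesis) produces an injection $M_x\otimes\frm/\frm^2\hookrightarrow M_x\otimes R/\frm^2$, forcing $\operatorname{Tor}_1^R(M_x,R/\frm)=0$.

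Finally, for base change to an arbitrary $Z\to B$ I would reduce to $Z=\Spec S$ local Noetherian mapping to $b$ and apply the central step twice. First, to $f\colon X\to B$, extending the argument to the Artinian quotients $Z_q:=\Spec S/\frn^q$ (each Artinian over $B$, hence covered by the hypothesis) to obtain $M\otimes_R S/\frn^q\simeq\sfh^{-i}(\dcx{X_{Z_q}/Z_q})$. Second, to $f_Z\colon X_Z\to Z$, whose hypothesis is inherited because any Artinian $W\to Z$ is automatically Artinian over $B$, to obtain $\sfh^{-i}(\dcx{X_Z/Z})\otimes_S S/\frn^q\simeq\sfh^{-i}(\dcx{X_{Z_q}/Z_q})$. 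Comparing, the base change map $\varrho_Z^{-i}$ becomes an isomorphism after reducing modulo every power of $\frn$; faithful flatness of $\frn$-adic completion on the coherent stalks then forces $\varrho_Z^{-i}$ itself to be an isomorphism. The main obstacle is the central step: one must marry the inverse-limit identification of \autoref{lem:completion} with the compatibility of the $N_q$ supplied by the hypothesis to extract the finite-level isomorphism $\varrho^{-i}_{B_q}$, and in particular verify that the Mittag-Leffler assumption of \autoref{lem:completion} on $\sfh^{-i-1}(\dcx{X_q/B_q})$ is satisfied in this setup or can be sidestepped by a direct dévissage argument.
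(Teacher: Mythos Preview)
Your overall strategy coincides with the paper's: localize on $X$ and $B$, use the Artinian hypothesis on the thickenings $B_q$ to control the inverse system $N_q=\sfh^{-i}(\dcx{X_q/B_q})$, invoke \autoref{lem:completion} to identify $\invlim N_q$ with the $\frm$-adic completion of $M=\sfh^{-i}(\dcx{X/B})$, and then descend to finite level. The paper carries out exactly this, citing \cite[\href{http://stacks.math.columbia.edu/tag/0912}{Tag 0912}]{stacks-project} for $(\invlim N_q)\otimes_X\sO_{X_j}\simeq N_j$ and \cite[\href{http://stacks.math.columbia.edu/tag/031C}{Tag 031C}]{stacks-project} for the completion side, so your ``central step'' is on target. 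Your observation that the Mittag--Leffler hypothesis in \autoref{lem:completion} concerns $\sfh^{-i-1}$ rather than $\sfh^{-i}$ is well taken; the paper invokes \autoref{lem:completion} without commenting on this point.

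There are, however, two genuine gaps in your writeup. First, your flatness argument using only $q=2$ does not conclude: the injection $M_x\otimes\frm/\frm^2\hookrightarrow M_x\otimes R/\frm^2$ only yields that $\operatorname{Tor}_1^R(M_x,R/\frm^2)\twoheadrightarrow\operatorname{Tor}_1^R(M_x,R/\frm)$ is surjective, not that the target vanishes. One needs the flatness of $M_x/\frm^q M_x$ over $R/\frm^q$ for \emph{all} $q$ and then the local criterion of flatness; the paper does this directly via \cite[\href{http://stacks.math.columbia.edu/tag/0523}{Tag 0523}]{stacks-project}. Second, your step for arbitrary base change breaks down at the claim $M\otimes_R S/\frn^q\simeq\sfh^{-i}(\dcx{X_{Z_q}/Z_q})$. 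Factoring $Z_q\to B_q\to B$, this amounts to asking that $\sfh^{-i}(\dcx{X_q/B_q})$ commute with the base change $Z_q\to B_q$, which is \emph{not} a closed immersion, so the hypothesis does not supply it. The paper avoids this by a different route: it uses \autoref{lem:D-duality} to realize $\sfh^{-i}(\dcx{X/B})$ as the Ext sheaf $\sExt^{N-i}_P(\sO_X,\omega_{P/B})$ and then appeals to \cite[1.9]{MR555258}, which upgrades commutation with all infinitesimal thickenings $B_q$ to commutation with arbitrary base change. Your second application of the central step to $f_Z$ is correct and would combine with this to finish, but the Ext interpretation plus \cite[1.9]{MR555258} is the missing ingredient.
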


\begin{proof}
  Since the base change morphism $\varrho_Z^{-i}$ is natural, the statement is local
  on $B$, so we may replace $B$ with a local scheme $(B,b)$. Furthermore, since both
  flatness and whether or not $\varrho_Z^{-i}$ is an isomorphism can be tested
  locally on $X$, we may also replace $X$ with a local scheme $(X,x)$, use the
  notation established in \autoref{not:mod-out-by-m-to-the-q}, assume that
  $f:(X,x)\to (B,b)$ is embeddable into a smooth morphism and apply
  \autoref{lem:D-duality}.

  Let $M_q:=\sfh^{-i}\big(\dcx {X_q/B_q}\big)$. Then by assumption $M_q$ is flat over
  $B_q$ for every $q\in\bN$ and the natural base change morphism is an isomorphism:
  \[
  M_{q+1}\otimes_{B_{q+1}}\sO_{B_q} \overset\simeq\longrightarrow M_q.
  \]
  In particular, the induced natural morphism $M_{q+1}\onto M_q$ is surjective and
  hence $(M_q)$ satisfies the Mittag-Leffler condition and $\invlim M_q$ is flat over
  $B$ by the first statement of
  \cite[\href{http://stacks.math.columbia.edu/tag/0912}{Tag 0912}]{stacks-project}.
  Furthermore, let $Q=\sO_{X_j}$ for a fixed $j\in\bN$. Then
  $M_q\otimes_XQ\simeq M_j$ for any $q\geq j$ by assumption and hence
  $\invlim(M_q\otimes_XQ)\simeq M_j$.  Then by the second statement of
  \cite[\href{http://stacks.math.columbia.edu/tag/0912}{Tag 0912}]{stacks-project},
  \begin{equation}
    \label{eq:37}
    (\invlim M_q) \otimes_X\sO_{X_j} =(\invlim M_q) \otimes_XQ \simeq
    \invlim(M_q\otimes_XQ)\simeq M_j.
  \end{equation}
  On the other hand,
  $\invlim M_q \simeq\invlim\big(\sfh^{-i}(\dcx{X/B})\otimes_X\sO_{X_q}\big)$ by
  \autoref{lem:completion} and so by
  \cite[\href{http://stacks.math.columbia.edu/tag/031C}{Tag 031C}]{stacks-project},
  \begin{equation}
    \label{eq:38}
    (\invlim M_q)\otimes_X\sO_{X_j}\simeq \sfh^{-i}(\dcx{X/B})\otimes_X\sO_{X_j}.
  \end{equation}
  Comparing \autoref{eq:37} and \autoref{eq:38} shows that $\sfh^{-i}(\dcx{X/B})$
  commutes with base change to $B_q$ for every $q\in\bN$ and then
  $\sfh^{-i}(\dcx{X/B})$ commutes with arbitrary base change by
  \autoref{lem:D-duality} and \cite[1.9]{MR555258}.
  Using that $M_q=\sfh^{-i}(\dcx{X_q/B_q})$ is flat over $B_q$ for every $q\in\bN$,
  it follows that $\sfh^{-i}(\dcx{X/B})$ is flat over $B$ by
  \cite[\href{http://stacks.math.columbia.edu/tag/0523}{Tag 0523}]{stacks-project}.
\end{proof}

\begin{cor}\label{cor:flatness}
  Let $f:(X,x)\to (B,b)$ be a flat local morphism that is essentially of finite type.
  If $(X_{b},x)$ has \toploccohs over $B$ then $\sfh^{-i}(\dcx{X/B})$ is flat over
  $B$ and commutes with arbitrary base change for each $i\in\bZ$.
\end{cor}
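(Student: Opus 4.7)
The plan is to derive this corollary by combining the Artinian flatness result of \autoref{thm:key} with the flatness-and-base-change criterion of \autoref{thm:generalized-flat-and-base-change}. Since $f$ is flat and essentially of finite type, it is locally embeddable into a smooth morphism (see \autoref{def:embeddable}), so the relative dualizing complex $\dcx{X/B}$ is available and \autoref{thm:generalized-flat-and-base-change} is applicable.

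To invoke that criterion for a fixed $i\in\bZ$, I would verify that for every Artinian scheme $Z$ with a morphism $Z\to B$, the sheaf $\sfh^{-i}(\dcx{X_Z/Z})$ is flat over $Z$ and commutes with base change to every closed subscheme of $Z$. Both properties can be tested locally on $Z$, so I would localize at $z\in Z$, set $S:=\sO_{Z,z}$, and use the induced local homomorphism $\sO_{B,b}\to S$. The hypothesis that $X_b$ has \toploccohs over $B$ transfers to \toploccohs over $S$ for the closed fiber of $X_Z\to Z$ at $z$ by \autoref{rem:inheriting-top-loc-cohs}, and \autoref{thm:key} then directly yields flatness of $\sfh^{-i}(\dcx{X_Z/Z})$ over $Z$.

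For the base-change compatibility inside the Artinian setting, I would appeal to \autoref{thm:surjectivity}\autoref{item:4}, which supplies the isomorphism
\[
\sfh^{-i}(\dcx{X_{Z'}/Z'}) \simeq \sfh^{-i}(\dcx{X_Z/Z})\otimes_{\sO_{X_Z}}\sO_{X_{Z'}}
\]
whenever $Z' = \Spec(S/I_j)$ for an ideal $I_j$ in a composition-series filtration of $S$ as in \autoref{filt-S}. Since any ideal of an Artinian local ring can be refined into such a filtration, this covers every closed subscheme $Z'\subseteq Z$. Assembling these two verifications and invoking \autoref{thm:generalized-flat-and-base-change} delivers the corollary.

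I do not anticipate any substantive obstacle; essentially all the technical work has already been absorbed into \autoref{thm:key}, \autoref{thm:surjectivity}, and \autoref{thm:generalized-flat-and-base-change}. The only point deserving a moment of care is the transfer of \toploccohs along the homomorphism $\sO_{B,b}\to\sO_{Z,z}$ for each Artinian $Z\to B$, which is precisely the content of \autoref{rem:inheriting-top-loc-cohs}.
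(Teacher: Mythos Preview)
Your proposal is correct and follows essentially the same route as the paper: verify the Artinian hypotheses of \autoref{thm:generalized-flat-and-base-change} by invoking \autoref{thm:key} for flatness and \autoref{thm:surjectivity}\autoref{item:4} for base change to closed subschemes, then conclude. The paper's one-line proof additionally cites \autoref{thm:surjectivity}\autoref{item:1}, but this is only to make explicit that the isomorphism in \autoref{item:4} is indeed the natural base change map (via the factorization noted in \eqref{nat-morph}); your appeal to \autoref{item:4} already carries this content, and your observation that any ideal of an Artinian local ring refines into a filtration as in \eqref{filt-S} is exactly what is needed.
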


\begin{proof}
  By \autoref{thm:key}, \autoref{thm:surjectivity}\autoref{item:1} and
  \autoref{thm:surjectivity}\autoref{item:4} $f$ satisfies the assumptions of
  \autoref{thm:generalized-flat-and-base-change} and hence the statement follows from
  \autoref{thm:generalized-flat-and-base-change}.
\end{proof}

\noin Now we are ready to prove \autoref{thm:main.new}.

\begin{thm}[=~\autoref{thm:main.new}]\label{thm:main-strong}
  Let $f:X\to B$ be a flat morphism of schemes that is essentially of finite type and
  let $b\in B$ 
  such that $X_{b}$ has \toploccohs over $B$. Then there exists an open neighborhood
  $X_b\subset U\subset X$ such that $\sfh^{-i}(\dcx{U/B})$ is flat over $B$ and
  commutes with base change for each $i\in\bZ$.
\end{thm}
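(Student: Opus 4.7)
The plan is to apply \autoref{cor:flatness} pointwise along $X_b$, use openness of flatness to pass to an open neighborhood $U \supseteq X_b$ on which \emph{every} $\sfh^{-i}(\dcx{U/B})$ is $B$-flat, and then deduce base change automatically from a Tor spectral sequence. The first step is just pointwise application: for every $x \in X_b$ the localized morphism $\Spec \sO_{X,x} \to \Spec \sO_{B,b}$ meets the hypotheses of \autoref{cor:flatness}, so each stalk $\sfh^{-i}(\dcx{X/B})_x \simeq \sfh^{-i}(\dcx{\Spec \sO_{X,x}/\Spec \sO_{B,b}})$ is flat over $\sO_{B,b}$ for every $i \in \bZ$.

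Next I will globalize this flatness. Cover $X_b$ by affine opens $V_\alpha \subseteq X$ on each of which $f$ is embeddable into a smooth morphism $P_\alpha \to B_\alpha$ over an affine open of $B$; this is possible by \autoref{def:embeddable}. By \autoref{lem:D-duality} the sheaves $\sfh^{-i}(\dcx{V_\alpha/B})$ are coherent and vanish outside a finite index set $I_\alpha \subseteq \bZ$. For each $i \in I_\alpha$ the locus in $V_\alpha$ where $\sfh^{-i}(\dcx{V_\alpha/B})$ is flat over $B$ is open---this is the standard openness of the flatness locus, available via the embedding into the finite-type $P_\alpha$---and it contains $V_\alpha \cap X_b$ by the pointwise step. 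Intersecting these finitely many loci yields an open $U_\alpha \subseteq V_\alpha$ containing $V_\alpha \cap X_b$ on which every $\sfh^{-i}(\dcx{U_\alpha/B})$ is $B$-flat. Setting $U := \bigcup_\alpha U_\alpha$ and using that flatness is local on $X$, every $\sfh^{-i}(\dcx{U/B})$ is $B$-flat.

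Finally, to upgrade flatness to base change I would fix an arbitrary $\vartheta \colon Z \to B$ and use the base change formula $\dcx{U_Z/Z} \simeq \dcx{U/B} \lotimes_B \sO_Z$ underlying \autoref{prop:base-change-map}. Working locally on each $V_\alpha$, where $\dcx{U/B}$ has bounded amplitude, the standard Tor spectral sequence
\[
E_2^{p,q} = \sTor^{\sO_B}_{-p}\bigl(\sfh^q(\dcx{U/B}),\, \sO_Z\bigr) \Longrightarrow \sfh^{p+q}\bigl(\dcx{U/B} \lotimes_B \sO_Z\bigr)
\]
degenerates, since every $\sfh^q(\dcx{U/B})$ is $B$-flat; the resulting edge isomorphisms $\sfh^{-i}(\dcx{U/B}) \otimes_B \sO_Z \overset{\simeq}{\longrightarrow} \sfh^{-i}(\dcx{U_Z/Z})$ coincide by naturality with the base change maps $\varrho^{-i}_Z$ of \autoref{prop:base-change-map}.

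I expect the main conceptual point to be this last step: once \emph{uniform} $B$-flatness has been achieved in every cohomological degree simultaneously, base change ceases to be an independent condition---as it is in \autoref{thm:generalized-flat-and-base-change}---and becomes a formal consequence of Tor vanishing. The more technical nuisance is openness of the flatness locus for essentially-of-finite-type morphisms, which I would circumvent by the embedding into a smooth $P_\alpha$ and the Ext description of \autoref{lem:D-duality}, reducing to the standard finite-type statement.
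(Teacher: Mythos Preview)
Your proof is correct and follows the same overall architecture as the paper: apply \autoref{cor:flatness} at every point of $X_b$, then spread out using openness. The paper's version is terser---it simply invokes \autoref{cor:flatness} to get both flatness \emph{and} base change at each stalk $\sfh^{-i}(\dcx{X/B})_x$, and then asserts that ``both of these properties are open on $X$'' without further comment.

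Your treatment of the spreading-out step is different and arguably cleaner. You only extract \emph{flatness} pointwise, use the standard openness of the flat locus (justified via the Ext description of \autoref{lem:D-duality}) to pass to an open $U$, and then recover base change for free from the Tor spectral sequence, using that \emph{all} cohomology sheaves are simultaneously $B$-flat on $U$. This sidesteps having to explain why ``commutes with base change'' is an open condition on $X$, which the paper does not justify. The cost is that your argument needs the local boundedness of $\dcx{U/B}$ for spectral sequence convergence, which you correctly obtain from the embedding into a smooth morphism. Both routes work; yours makes the openness step rest only on the classical flatness-locus theorem.
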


\begin{proof}
  Let $x\in X_b$ and temporarily replace $f:X\to B$ with the induced local morphism
  $(X,x)\to (B,b)$.  Then $\sfh^{-i}(\dcx{X/B})$ is flat over $B$ and commutes with
  arbitrary base change by \autoref{cor:flatness}.

  Since localization is an exact functor, we obtain that for the original $f:X\to B$
  and any $x\in X_b$ the localized cohomology sheaves $\sfh^{-i}(\dcx{X/B})_x$ are
  flat over $B$ and commute with base change for each $i\in\bZ$. Both of these
  properties are open on $X$ and hence there is an open neighbourhood of $x$ where
  they hold. The union of these neighbourhoods for all $x\in X_b$ provide an open
  neighbourhood of $X_b$ where these properties hold.
\end{proof}

Now \autoref{thm:main-strong} and \autoref{prop:loc-coh-surj-1} implies
\autoref{cor:main-db-implies-slc}\autoref{item:7} and \autoref{thm:main-strong} and
\autoref{prop:F-anti-nilp-has-liftable-cohs} implies
\autoref{cor:main-db-implies-slc}\autoref{item:8}.

\section{Du\thinspace Bois singularities}\label{sec:local-cohomology-db}

\noin %
In characteristic 0, the optimal setting for deformation invariance of cohomology
seems to be the class of \DB singularities, introduced by Steenbrink
\cite{Steenbrink83}.  For a proper complex variety with \DB singularities the natural
morphism
\begin{equation*}
  \label{eq:star}
  \xymatrix{%
    H^i(X,\bC_X)\ar@{->>}[r] & H^i(X,\sO_X)
  }
  \tag{$\star$}
\end{equation*}
is surjective, and one should think of \DB singularities as the largest class for
which this holds, cf.~\cite{Kovacs11d}. This surjectivity enables one to use
topological arguments to control the sheaf cohomology groups $H^i(X,\sO_X) $ in flat
families as in \cite{MR0376678}.



The proof of \autoref{cor:cm-defo-implies-cm} for projective morphisms in
\cite{MR2629988} very much relied on global duality, hence properness. Our first hope
was that one can localize the proofs by replacing \autoref{eq:star} with the
analogous map between local cohomology groups
\begin{equation*}
  \xymatrix{%
    H^i_x(X,\bC_X)\ar@{->>}[r] & H^i_x(X,\sO_X).
  }
\end{equation*}
However, this turned out to be too simplistic, one needs to consider instead the map
\begin{equation*}
  \xymatrix{%
    H^i_x(X,\bC_X)\ar@{->>}[r] & \bH^i_x(X,\dbcx X),
  }
\end{equation*}
where $\dbcx X$ denotes the $0^\text{th}$ associated graded Du~Bois complex of $X$.
For the construction of the Du~Bois complex see \cite{DuBois81,GNPP88} and for its
relevance to higher dimensional geometry see \cite[\S 6]{SingBook}.  The surjectivity
in \autoref{eq:star} seems simple, but it is a key element of Kodaira type vanishing
theorems \cite{Kollar87b}, \cite[\S 12]{Kollar95s},\cite{Kovacs00c},\cite{MR2646306}
and leads to various results on deformations of \DB schemes
\cite{MR0376678,MR2629988,KS13}. Eventually we understood that for our purposes the
key property is \toploccohs.

\begin{thm}\label{DB.toploccohs}\label{prop:loc-coh-surj-1}
  Let $X$ be a scheme, essentially of finite type over a field of characteristic $0$.
  Assume that $X$ is \DB. Then $X$ has \toploccohs.
\end{thm}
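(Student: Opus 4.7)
The plan is to use the Du~Bois complex $\dbcx{-}$ as an intermediary for comparing the local cohomology of $R$ and that of $T$: even though the ambient ring $R$ need not be Du~Bois, the complexes $\dbcx{R}$ and $\dbcx{T}$ both depend only on the common reduced structure of $\Spec R$ and $\Spec T$, and this is the mechanism that will transfer the Du~Bois hypothesis on $T$ into control of $H^i_{\mathfrak m}(R)$.

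Concretely, fix a point $x\in X$, set $T:=\sO_{X,x}$, and let $(R,\mathfrak m)$ together with a nilpotent ideal $I\subset R$ and identification $R/I\simeq T$ be given.  First I would assemble the commutative square
\[
\xymatrix{
  H^i_{\mathfrak m}(R) \ar[r] \ar[d]_-{\eta_R^*}  &  H^i_{\mathfrak n}(T) \ar[d]^-{\eta_T^*}_-{\simeq} \\
  \mathbb{H}^i_{\mathfrak m}(\dbcx{R}) \ar[r]      &  \mathbb{H}^i_{\mathfrak n}(\dbcx{T}),
}
\]
whose verticals are induced by $\eta\colon \sO_{-}\to\dbcx{-}$ and whose horizontals come from functoriality in the surjection $R\twoheadrightarrow T$.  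The right vertical arrow is an isomorphism precisely because $T$ is Du~Bois, so the desired surjectivity follows once one verifies (a)~the bottom arrow is an isomorphism and (b)~the left vertical arrow is surjective.

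For (a), I would invoke the well-known nilpotent-invariance of the Du~Bois complex: since $I$ is nilpotent we have $R_{\mathrm{red}}=T_{\mathrm{red}}$, and the cubical hyperresolution construction of \cite{DuBois81,GNPP88} can be arranged to factor through the common reduction, producing canonical, compatible quasi-isomorphisms $\dbcx{R}\simeq\dbcx{R_{\mathrm{red}}}\simeq\dbcx{T}$ that identify the bottom arrow with the identity on hypercohomology.

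The hard part will be (b): surjectivity of $\eta_R^*\colon H^i_{\mathfrak m}(R)\to \mathbb{H}^i_{\mathfrak m}(\dbcx{R})$ for the \emph{single} ring $R$, with no assumption that $R$ itself be Du~Bois and no remaining reference to $T$ or $I$.  My plan here is to dualize via Grothendieck local duality, translating this into the injectivity on cohomology sheaves $\sfh^{-i}(\uldcx{R})\hookrightarrow\sfh^{-i}(\dcx{R})$ of the natural morphism $\uldcx{R}\to\dcx{R}$ dual to $\eta$.  This injectivity is the algebraic counterpart of the Hodge-theoretic surjection $H^i_x(X,\bC)\twoheadrightarrow \mathbb{H}^i_x(X,\dbcx{X})$ emphasised at the start of this section, and should be obtainable by computing $\dbcx{R}$ via a cubical hyperresolution of $\Spec R$ by smooth schemes and invoking Grauert--Riemenschneider-type vanishing available in characteristic zero to control the cone of $\eta_R$.
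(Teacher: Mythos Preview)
Your proposal is correct and follows essentially the same route as the paper: the same commutative square, the same identification of the bottom arrow via $\dbcx{R}\simeq\dbcx{T}$, and the same reduction of the left vertical surjectivity via local duality to the injectivity $\sfh^{-i}(\uldcx{R})\hookrightarrow\sfh^{-i}(\dcx{R})$. The only difference is that the paper does not reprove this last injectivity but quotes it as \autoref{thm:key-injectivity} from \cite{Kovacs-Schwede11b,KS13,MSS17}; note that the actual input there is Koll\'ar's splitting/torsion-freeness for higher direct images of dualizing sheaves (a Hodge-theoretic statement) rather than Grauert--Riemenschneider vanishing alone, so your sketch of step~(b) is slightly imprecise on the mechanism, though the reduction itself is exactly right.
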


For the definition of \DB\sings the reader is referred to \cite[\S 6]{SingBook}.  A
scheme defined over a field of characteristic $0$ is said to have \emph{\DB \sings}
if its base extension to $\bC$ does. In addition to the properties mentioned above
recall that \DB \sings are invariant under small deformation by
\cite[4.1]{Kovacs-Schwede11b}.
  
Let us start the proof of \autoref{DB.toploccohs} by recalling the following
statement.

\begin{thm}[\cite{Kovacs-Schwede11b,KS13,MSS17}]
  \label{thm:key-injectivity}
  Let $X$ be a scheme, essentially of finite type over
  a field of characteristic $0$.  Then the natural morphism
  \[
  \sfh^i(\uldcx X) \into \sfh^i(\dcx X)
  \]
  is injective for every $i\in\bZ$. 
\end{thm}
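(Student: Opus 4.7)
The map $\uldcx X \to \dcx X$ is by construction obtained by applying $\myR\sHom_X(-, \dcx X)$ to the natural morphism $\eta: \sO_X \to \dbcx X$. My plan is therefore to deduce the sheaf-level injectivity from a suitable splitting/surjectivity statement about $\eta$ via local duality.

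The claim is local on $X$, so one may pass to the stalk at an arbitrary point $x \in X$. By local duality applied to $\dcx X$, together with the faithful flatness of completion at $x$, the injectivity of
\[
\sfh^i(\uldcx X)_x \hookrightarrow \sfh^i(\dcx X)_x
\]
is equivalent to the surjectivity of the natural morphism on local cohomology
\[
H^j_x(X, \sO_X) \twoheadrightarrow \bH^j_x(X, \dbcx X)
\]
induced by $\eta$, for every $j$ (with $j = -i$).

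To establish this surjectivity I would use the construction of $\dbcx X$ as $\myR \epsilon_* \sO_{X_\bullet}$ for a cubical hyperresolution $\epsilon: X_\bullet \to X$ with each component smooth, together with the Deligne--Du~Bois $E_1$-degeneration of the Hodge-to-de~Rham spectral sequence for the full Du~Bois complex $\FullDuBois X$. This degeneration, which is where the hypothesis of characteristic $0$ enters in an essential way, forces the edge morphism in the spectral sequence computing $\bH^j_x(X, \dbcx X)$ from $H^q_x(X_p, \sO_{X_p})$ to be surjective, which yields the desired surjectivity from $H^j_x(\sO_X)$ by naturality of $\eta$.

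The main obstacle is that the classical $E_1$-degeneration is stated for \emph{proper} schemes, whereas here $(X, x)$ is an arbitrary local scheme essentially of finite type. I would bridge this gap by compactifying a Zariski neighbourhood of $x$ inside a proper scheme, applying the Hodge-theoretic degeneration on the compactification, and then using excision and the long exact sequence for local cohomology supported at $x$ to transfer the surjectivity from global hypercohomology on the compactification to local cohomology at $x$. Carrying out this reduction carefully—in particular tracking how the cubical hyperresolution restricts along the open immersion—is the technical core of the argument, and is exactly the work performed in \cite{Kovacs-Schwede11b,KS13,MSS17}.
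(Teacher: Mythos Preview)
The paper does not give its own proof of this theorem; it is cited from \cite{Kovacs-Schwede11b,KS13,MSS17}, so there is no in-paper argument to compare against directly. Your local duality reduction is correct, and indeed the paper uses exactly this equivalence in the reverse direction when deriving \autoref{cor:key-surjectivity} from \autoref{thm:key-injectivity}.

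However, your description of what the cited references actually do is inverted. Those papers prove the sheaf-level injectivity $\sfh^i(\uldcx X)\hookrightarrow\sfh^i(\dcx X)$ \emph{directly}: one compactifies $X$ to a proper $\bar X$, uses $E_1$-degeneration to obtain the global surjectivity $H^i(\bar X,\sO_{\bar X})\twoheadrightarrow\bH^i(\bar X,\dbcx{\bar X})$, dualizes via Grothendieck duality for the proper morphism $\bar X\to\Spec k$, and then twists by a sufficiently ample line bundle (using Serre vanishing) to pass from the global injectivity to the sheaf-level statement. The local cohomology surjectivity is then a \emph{corollary} of this, exactly as in the present paper. Your claim that the references carry out the transfer ``from global hypercohomology on the compactification to local cohomology at $x$'' via excision and the long exact sequence is not what happens there, and it is not clear that such a transfer works: surjectivity at the global level does not formally propagate to surjectivity of $H^j_x$ through the local cohomology long exact sequence. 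If you want to follow the literature, replace the local-duality-then-excision plan with the compactify--global-dualize--twist argument; the local cohomology surjectivity then falls out afterward, as in \autoref{cor:key-surjectivity}.
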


\begin{rem}
  \autoref{thm:key-injectivity} was first proved in
  \cite[Theorem~3.3]{Kovacs-Schwede11b}. 
  A version for pairs, essentially with the same proof, was given in
  \cite[Theorem~B]{KS13}. Both of these were stated for reduced schemes even though
  the proof does not need that assumption.  This was noticed and carefully confirmed
  in \cite[Lemma~3.2]{MSS17} where the proof is carried out in detail for the
  not-necessarily-reduced case.
\end{rem}

\begin{cor}\label{cor:key-surjectivity}
  Let $X$ be a scheme, essentially of finite type over a field of characteristic $0$
  and $x\in X$ a closed
  point.  Then the natural morphism
  \[
  \xymatrix{%
    H^i_x(\sO_X) \ar@{->>}[r] &  \bH^i_x(\dbcx X)
  }
  \]
  is surjective for each $i\in\bZ$.
\end{cor}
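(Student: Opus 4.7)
The plan is to deduce this surjectivity statement from \autoref{thm:key-injectivity} via local (Matlis--Grothendieck) duality, which converts the injectivity of $\sfh^{-i}(\uldcx X)\hookrightarrow \sfh^{-i}(\dcx X)$ into the desired surjectivity of $H^i_x(\sO_X)\onto \bH^i_x(\dbcx X)$.

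First I would reduce to the local situation by replacing $X$ with $\Spec \sO_{X,x}$; this is harmless because both local cohomology modules, the dualizing complex, and the Du\thinspace Bois complex are compatible with localization, and $X$ remains essentially of finite type over a field of characteristic zero (so $\dcx X$ exists and is a normalized dualizing complex in the sense of Hartshorne). Next I would invoke local duality (\cite[Corollary~V.6.5]{RD}): for any complex $\cmx F$ with coherent cohomology on the local scheme $(X,x)$, there is a natural isomorphism
\[
\bH^i_x(\cmx F)^{\vee}\simeq \sfh^{-i}\bigl(\myR\sHom_X(\cmx F,\dcx X)\bigr)^{\!\wedge},
\]
where $(\blank)^\vee=\Hom(\blank,E)$ is Matlis duality with respect to an injective hull $E$ of $\kappa(x)$, and $(\blank)^{\wedge}$ denotes $\frm$-adic completion. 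Applying this to $\cmx F=\sO_X$ and $\cmx F=\dbcx X$ (which has coherent cohomology) yields respectively
\[
H^i_x(\sO_X)^{\vee}\simeq \sfh^{-i}(\dcx X)^{\!\wedge}
\qquad\text{and}\qquad
\bH^i_x(\dbcx X)^{\vee}\simeq \sfh^{-i}(\uldcx X)^{\!\wedge},
\]
using the definition $\uldcx X=\myR\sHom_X(\dbcx X,\dcx X)$ from \eqref{def-and-not}.

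Now the morphism $\eta:\sO_X\to \dbcx X$ induces, by applying $\bH^i_x$, the map $H^i_x(\sO_X)\to \bH^i_x(\dbcx X)$ whose surjectivity we want. By functoriality and the naturality of local duality, its Matlis dual is precisely the completion of the natural map $\sfh^{-i}(\uldcx X)\to \sfh^{-i}(\dcx X)$ described in \eqref{def-and-not} (the one dual to $\eta$). By \autoref{thm:key-injectivity} that map is injective, and since $\frm$-adic completion of finitely generated $\sO_{X,x}$-modules is exact, its completion is injective as well. Matlis duality is an exact contravariant functor on the category of modules with Artinian (respectively, complete Noetherian) support, so it carries this injection to a surjection of the corresponding Matlis duals; tracing back through the identifications above yields the claimed surjectivity of $H^i_x(\sO_X)\onto \bH^i_x(\dbcx X)$.

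The only real subtlety is bookkeeping: one must check that the map obtained by applying $\bH^i_x$ to $\eta$ agrees, under local duality, with the completion of the morphism $\sfh^{-i}(\uldcx X)\to \sfh^{-i}(\dcx X)$ dual to $\eta$. This is a naturality assertion built into Grothendieck's local duality theorem once one identifies $\uldcx X$ with $\myR\sHom_X(\dbcx X,\dcx X)$, so there is no genuine obstruction; everything else reduces to exactness of Matlis duality and flatness of completion on finitely generated modules.
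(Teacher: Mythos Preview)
Your proof is correct and follows essentially the same route as the paper: apply local duality to convert the injectivity of \autoref{thm:key-injectivity} into the desired surjectivity. The only cosmetic difference is that you cite the Matlis-dual form of local duality (\cite[Corollary~V.6.5]{RD}), which forces you to pass through $\frm$-adic completion, whereas the paper uses the direct form (\cite[Theorem~V.6.2]{RD}) identifying $\myR\Gamma_x(\cmx F)$ with $\myR\Hom_{\sO_{X,x}}(\myR\sHom_X(\cmx F,\dcx X),E)$; this avoids the completion step and lets one simply use exactness of $\Hom(\blank,E)$ to conclude.
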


\begin{proof}
  Let $E$ be an injective hull of the residue field $\kappa(x)$ 
  as an $\sO_{X,x}$-module. Then by local duality \cite[Theorem~V.6.2]{RD} there
  exists a commutative diagram where the vertical maps are isomorphisms:
  \begin{equation}
    \label{eq:2}
    \begin{aligned}
      \xymatrix{%
        \myR\Gamma_x(\sO_X)\ar[r] \ar[d]_\simeq &  \myR\Gamma_x(\dbcx X) \ar[d]^\simeq \\
        \myR\!\Hom_{\sO_{X,x}} ( \dcx X, E) \ar[r] & \myR\!\Hom_{\sO_{X,x}} ( \uldcx
        X, E). }
    \end{aligned}
  \end{equation}
  Since $E$ is injective, the functor $\Hom_{\sO_{X,x}} ( \blank, E)$ is exact and
  hence it commutes with taking cohomology. Thus one has that 
  \[
  \sfh^i( \myR\!\Hom_{\sO_{X,x}} ( \dcx X, E) ) \simeq \Hom_{\sO_{X,x}} ( \sfh^i(\dcx
  X), E)
  \]
  and
  \[
  \sfh^i( \myR\!\Hom_{\sO_{X,x}} ( \uldcx X, E) ) \simeq \Hom_{\sO_{X,x}} (
  \sfh^i(\uldcx X), E).
  \]
  It follows that by taking cohomology of the diagram in \autoref{eq:2} one obtains
  for each $i$ the commutative diagram
  \begin{equation}
    \label{eq:3}
    \begin{aligned}
      \xymatrix{%
        H^i_x(\sO_X)\ar[r] \ar[d]_\simeq &  \bH^i_x(\dbcx X) \ar[d]^\simeq \\
        \Hom_{\sO_{X,x}} ( \sfh^i(\dcx X), E) \ar[r] & \Hom_{\sO_{X,x}} (
        \sfh^i(\uldcx X), E). }
    \end{aligned}
  \end{equation}
  Again, since $\Hom_{\sO_{X,x}} ( \blank, E)$ is exact, it follows from
  \autoref{thm:key-injectivity} that the bottom homomorphism is surjective which
  implies the desired statement.
\end{proof}

\begin{rem}
  An important aspect of \autoref{cor:key-surjectivity} is that the local cohomology
  of $\sO_X$ depends on the non-reduced structure, while that of $\dbcx X$ does
  not. Essentially, the left hand side reflects the algebraic structure, while the
  right hand side behaves as if it only depended on the topology (this is not
  entirely true!).

  This behavior allows us to prove \autoref{DB.toploccohs}. The proof is based on the
  interplay between the non-reduced and reduced data. It is similar in spirit to the
  proofs of \cite[Lemme 1]{MR0376678}, \cite[Theorem~5.1]{KS13}, and
  \cite[Lemma~3.3]{MSS17}.
\end{rem}

\begin{proof}[Proof of \autoref{DB.toploccohs}]
  Using \autoref{not:top-loc-cohs} assume that $A=k$ is a field of characteristic $0$
  and that $X=\Spec T$ has Du~Bois singularities.  We need to prove that the induced
  morphism on local cohomology $H^i_\frm(R) \onto H^i_\frm(T)$ is surjective for each
  $i$.
  Consider the following diagram:
  \[
  \xymatrix@C=3em@R=3em{%
    H^i_\frm(R) \ar@{->>}[d]_\xi \ar[rr]^-\chi 
    & 
    & H^i_\frm(T) \ar[d]_\simeq^\vartheta \\
    H^i_\frm(\dbcx R) \ar[rr]^-\simeq_-\zeta & & H^i_\frm(\dbcx {T}) }
  \]
  Using the notation of the diagram, one has that $\xi$ is surjective by
  \autoref{cor:key-surjectivity}, $\zeta$ is an isomorphism because
  $\dbcx R\simeq \dbcx {T}$, and $\vartheta$ is an isomorphism, because $\Spec T$ has
  \DB \sings. It follows then that
  $\chi$ is surjective.
\end{proof}





\section{\Fp \sings}\label{sec:fp-sings}

\noin %
There is an intriguing correspondence between singularities of the minimal model
program in characteristic $0$ and singularities defined by the action of the
Frobenius morphism in positive characteristic. For more on this correspondence the
reader may consult \cite[App.~C]{MR2932591} or \cite[\S8.4]{SingBook}. Our goal here
is to show that $F$-pure, or more generally $F$-anti-nilpotent singularities have
\toploccohs over their ground field.

\begin{defini}
  Let $(R,\frm)$ be a noetherian local ring of characteristic $p>0$ with the
  Frobenius endomorphism $F:R\to R$; $x\mapsto x^p$. 

  Recall that a homomorphism of $R$-modules $M\to M'$ is called \emph{pure} if for
  every $R$-module $N$ the induced homomorphism $M\otimes_RN\to M'\otimes_RN$ is
  injective. $R$ is called \emph{$F$-pure} if the Frobenius endomorphism is pure. $R$
  is called \emph{$F$-finite} if $R$ is a finitely generated $R$-module via the
  Frobenius endomorphism $F$. For instance, if $R$ is essentially of finite type over
  a field, then it is $F$-finite.  Further note that if $R$ is $F$-finite or complete
  then it is $F$-pure if and only if the Frobenius endomorphism $F:R\to R$ has a left
  inverse \cite[5.3]{MR0417172}.

  $R$ is called \emph{$F$-injective} if the induced Frobenius action on $H^i_\frm(R)$
  is injective for all $i\in \bN$. This holds for example if $R$ is $F$-pure by
  \cite[2.2]{MR0417172} and if $R$ is Gorenstein then it is $F$-pure if and only if
  it is $F$-injective \cite[3.3]{MR701505}.  

  A strengthening of the notion of $F$-injective was recently introduced in
  \cite{MR2460693}: Consider the induced Frobenius action
  $F:H^i_\frm(R)\to H^i_\frm(R)$. A submodule $M\subseteq H^i_\frm(R)$ is called
  \emph{$F$-stable} if $F(M)\subseteq M$ and $R$ is called \emph{$F$-anti-nilpotent}
  if for any $F$-stable submodule $M\subseteq H^i_\frm(R)$, the induced Frobenius
  action on the quotient $H^i_\frm(R)/M$ is injective.  If $R$ is $F$-anti-nilpotent,
  then it is $F$-injective, since $\{0\}\subseteq H^i_\frm(R)$ is an $F$-stable
  submodule. Furthermore, if $R$ is $F$-pure, then it is $F$-anti-nilpotent by
  \cite[3.8]{MR3271179}. So we have the following implications:
  \begin{equation}
    \label{eq:13}
    \xymatrix{%
      \text{$F$-pure } \ar@{=>}[r] &  \text{ $F$-anti-nilpotent } \ar@{=>}[r] & 
      \text{ $F$-injective}. 
    }
  \end{equation}
  Let $(X,x)$ be a local scheme. Then we say that $X$ has \emph{$F$-pure},
  resp.~\emph{$F$-anti-nilpotent}, resp.~\emph{$F$-injective} singularities if the
  local ring $\sO_{X,x}$ has the corresponding property. An arbitrary scheme $X$ of
  equicharacteristic $p>0$ has \emph{$F$-pure}, resp.~\emph{$F$-anti-nilpotent},
  resp.~\emph{$F$-injective} singularities if the local scheme $(X,x)$ has the
  corresponding property for each $x\in X$.
\end{defini}

These singularities are related to the singularities of the minimal model
program. Normal $\bQ$-Gorenstein $F$-pure singularities are log canonical by
\cite{MR1874118} and it is conjectured that in some form the converse also
holds. Similarly, $F$-injective singularities correspond to \DB \sings: If $X$ is
essentially of finite type over a field of characteristic $0$ and its reduction mod
$p$ is $F$-injective for infinitely many $p$'s, then $X$ has \DB \sings by
\cite{MR2503989} and the converse of this is also conjectured to hold. So the
(outside) implication in \autoref{eq:13} is analogous to that log canonical \sings
are \DB \cite{MR2629988}.

Curiously, we have this additional notion, $F$-anti-nilpotent, in between the more
familiar $F$-pure and $F$-injective notions. It turns out that $F$-anti-nilpotent,
and hence $F$-pure, rings have \toploccohs, but $F$-injective in general do not. This
suggests that possibly $F$-anti-nilpotent is a better analog of \DB \sings in
positive characteristic than $F$-injective. Of course, this is far from conclusive
evidence, and this issue will not be settled here.

These singularities, defined by the action of Frobenius, have been studied
extensively through their local cohomology. So it is no surprise that the fact that
$F$-anti-nilpotent singularities have \toploccohs is a relatively simple consequence of
known results. The following statement is essentially proved in
\cite[Remark~3.4]{MSS17}, although their statement is slightly different, so we
include a proof for completeness.

\begin{prop}[(Ma-Schwede-Shimomoto)]\label{prop:F-anti-nilp-has-liftable-cohs}
  $F$-anti-nilpotent singularities have \toploccohs over their ground field.
\end{prop}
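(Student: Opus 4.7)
The plan is to adapt the argument in Ma--Schwede--Shimomoto \cite[Remark~3.4]{MSS17}. Let $(R,\frm)$ be a noetherian local $k$-algebra and $I\subset R$ a nilpotent ideal with $R/I\simeq T$, where $T$ is $F$-anti-nilpotent. The short exact sequence $0\to I\to R\to T\to 0$ gives a long exact sequence of local cohomology, from which surjectivity of $H^i_\frm(R)\to H^i_\frn(T)$ is equivalent to the vanishing of the connecting map
\[
\delta\colon H^i_\frn(T)\longrightarrow H^{i+1}_\frm(I).
\]
So the goal is to show $\delta=0$ for every $i$.

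The strategy rests on two observations. First, nilpotency of $I$ forces the Frobenius action on $H^{i+1}_\frm(I)$ to be nilpotent: if $I^N=0$, then the Frobenius $F\colon R\to R$ restricts to $F\colon I\to I^p$, so that $F^e(I)\subseteq I^{p^e}=0$ once $p^e\ge N$. Hence $F^e=0$ on $H^{i+1}_\frm(I)$ for $e\gg 0$. Second, $\delta$ is Frobenius-compatible (interpreted via Frobenius pushforward, since $F$ is not $R$-linear), so $N:=\ker\delta$ is an $F$-stable submodule of $H^i_\frn(T)$ and the natural isomorphism $H^i_\frn(T)/N \simeq \im\delta$ is Frobenius-equivariant. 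Since $\im\delta\hookrightarrow H^{i+1}_\frm(I)$ $F$-equivariantly, Frobenius on $H^i_\frn(T)/N$ is nilpotent.

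Now invoke $F$-anti-nilpotence: by definition, the induced Frobenius action on $H^i_\frn(T)/N$ is injective. An action that is simultaneously injective and nilpotent must be on the zero module, so $H^i_\frn(T)/N=0$, i.e., $N=H^i_\frn(T)$ and $\delta=0$ as required.

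The only genuine technical step—and really the point of the argument—is verifying the Frobenius-equivariance of the connecting map and of the long exact sequence, since $F$ is not $R$-linear and one has to work with the Frobenius pushforward functor $F_*$ throughout. This is the content of the formalism developed in \cite{MR2460693,MR3271179,MSS17}, and the claim then follows from combining this formalism with the two observations above. No further input from the ambient ring $R$ is needed beyond its noetherianness and locality, which is why the conclusion holds over the ground field $k$ uniformly (cf.~\autoref{rem:inheriting-top-loc-cohs}).
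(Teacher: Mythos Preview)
Your argument is correct and follows essentially the same route as the paper's proof: both identify the $F$-stable submodule $N=\ker\delta=\im\bigl[H^i_\frm(R)\to H^i_\frn(T)\bigr]$, show that Frobenius acts nilpotently on $H^i_\frn(T)/N$, and then invoke $F$-anti-nilpotence to conclude that the quotient vanishes. The one minor difference is in how nilpotence on the quotient is established: the paper passes to the completion, writes $R$ and $T$ as quotients of a regular local ring, and cites \cite[Lemma~2.2]{MR2197409} to obtain $F^e(H^i_\frn(T))\subseteq N$, whereas you embed $H^i_\frn(T)/N\simeq\im\delta$ into $H^{i+1}_\frm(I)$ and use directly that $F^e(I)\subseteq I^{p^e}=0$ for $e\gg 0$. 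Your version is slightly more self-contained, avoiding both the completion step and the external citation, but the underlying mechanism (nilpotence of $I$ killing high Frobenius iterates) is the same in both.
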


\begin{proof}
  (Following the argument in \cite[Remark~3.4]{MSS17}).
  Using \autoref{not:top-loc-cohs} assume that $A=k$ is a field of characteristic
  $p>0$ and that $(R,\frm)$ has \Fan \sings.  We need to prove that the induced
  morphism on local cohomology $H^i_\frm(R) \onto H^i_\frm(T)$ is surjective for each
  $i$ (cf.~\autoref{def:liftable-cohom}).

  Since the statement is about local cohomology we may assume that $R$ is complete
  and hence $R\simeq R'/J$ where $R'$ is a complete regular local ring and
  $J\subseteq R'$ is an ideal.  Note that denoting the pre-image of $I\subset R$ in
  $R'$ by $I'$, we have that $T\simeq R'/I'$ is also a quotient of $R'$.

  Let $M\leteq \im\left[H^i_\frm(R)\to H^i_\frm(T)\right]$, which is an $F$-stable
  submodule of $H^i_\frm(T)$. Then $M$ contains $F^e(H^i_\frm(T))$ for some $e>0$ by
  \cite[Lemma~2.2]{MR2197409} and hence the Frobenius action on $H^i_\frm(T)/M$ is
  nilpotent.  In particular it is injective only if this quotient is $0$. Therefore,
  if $R$ is $F$-anti-nilpotent, then $H^i_\frm(R)\onto H^i_\frm(T)$ is surjective as
  desired.
\end{proof}

\begin{cor}[(Ma)]\label{cor:F-pure-has-liftable-cohs}
  $F$-pure singularities have \toploccohs over their ground field.
\end{cor}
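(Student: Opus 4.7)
The plan is to deduce this corollary as an immediate consequence of the preceding \autoref{prop:F-anti-nilp-has-liftable-cohs} together with the implication chain recorded in \eqref{eq:13}. There is essentially nothing new to prove: the chain
\[
\text{$F$-pure} \;\Longrightarrow\; \text{$F$-anti-nilpotent} \;\Longrightarrow\; \text{$F$-injective}
\]
already cited from \cite[3.8]{MR3271179} shows that any $F$-pure local ring $(R,\frm)$ of positive characteristic is automatically $F$-anti-nilpotent.

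So the proof will proceed in one step: let $R$ be an $F$-pure noetherian local ring over the ground field $k$. By the left implication in \eqref{eq:13}, $R$ is $F$-anti-nilpotent. Applying \autoref{prop:F-anti-nilp-has-liftable-cohs} then gives that $R$ has \toploccohs over $k$, which is exactly the conclusion. I do not anticipate any obstacle here; the entire content of the corollary is packaged into the implication cited from \cite{MR3271179} and the preceding proposition, and the statement is included separately only for the convenience of a reader who is familiar with $F$-purity but not with the more recent notion of $F$-anti-nilpotence.
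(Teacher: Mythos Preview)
Your proposal is correct and matches the paper's proof exactly: the paper simply cites \cite[3.8]{MR3271179} for the implication $F$-pure $\Rightarrow$ $F$-anti-nilpotent and then invokes \autoref{prop:F-anti-nilp-has-liftable-cohs}.
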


\begin{proof}
  $F$-pure \sings are $F$-anti-nilpotent by \cite[3.8]{MR3271179}, so this is a
  direct consequence of \autoref{prop:F-anti-nilp-has-liftable-cohs}.
\end{proof}

\section{Degenerations of \CM \sings with \toploccohs}\label{sec:degenerations-cm-db}

\begin{prop}\label{prop:S_n-via-hi}
  Let $Z$ be a scheme that admits a dualizing complex $\dcx{Z}$, $z\in Z$ a
  (not-necessarily-closed) point, and $n\in\bN$. Then $Z$ is $S_n$ at $z$ if and only
  if for all $i\in\bZ$,
  \begin{equation}
    \label{eq:27}
    \sfh^{-i}(\dcx Z)_z = 0 \text{ for $i<\min(n,\dim_z Z)+\dim z$}.
  \end{equation}
  In particular, if $Z$ is equidimensional, then $Z$ is $S_n$ if and only if for all
  $i\in\bZ$, $i<\dim Z$,
  \begin{equation}
    \label{eq:29}
    \dim\supp\sfh^{-i}(\dcx Z) 
    \leq i-n.
  \end{equation}
  (In this statement we take $\dim \emptyset=-\infty$).
\end{prop}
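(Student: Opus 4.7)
The plan is to reduce the pointwise assertion to Matlis duality applied to the local ring $R := \sO_{Z,z}$, and then to reorganize the resulting vanishing conditions over $z \in Z$ in order to deduce the equidimensional ``in particular'' statement. Setting $d_z := \dim \overline{\{z\}}$, I would first record the standard normalization identifying the normalized dualizing complex of $R$ with a shift of the stalk:
$$
\dcx R \simeq (\dcx Z)_z[-d_z], \qquad\text{so that}\qquad \sfh^{-j}(\dcx R) \simeq \sfh^{-j - d_z}(\dcx Z)_z \quad \text{for every } j\in\bZ.
$$

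Next, local duality (\cite[V.6.2]{RD}) supplies an isomorphism of Matlis duals $H^j_{\frm_z}(R)^{\vee} \simeq \sfh^{-j}(\dcx R)$; since Matlis duality is faithful on finitely generated modules, $H^j_{\frm_z}(R) = 0$ if and only if $\sfh^{-j-d_z}(\dcx Z)_z = 0$. Combining this with the characterization $\depth R = \inf\{j : H^j_{\frm_z}(R) \neq 0\}$ and reindexing by $i = j + d_z$ gives the equivalence
$$
\depth R \geq k \;\Longleftrightarrow\; \sfh^{-i}(\dcx Z)_z = 0 \text{ for all } i < k + d_z.
$$
Since ``$Z$ is $S_n$ at $z$'' is by definition the condition $\depth R \geq \min(n, \dim R) = \min(n, \dim_z Z)$, taking $k = \min(n, \dim_z Z)$ produces exactly the claimed vanishing range $i < \min(n, \dim_z Z) + \dim z$, completing the pointwise equivalence.

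For the ``in particular'' statement, I would assume $Z$ equidimensional of dimension $d$ (the existence of a dualizing complex forces $Z$ to be catenary), so that $\dim_z Z + \dim z = d$ for every point $z$. The pointwise vanishing range then rewrites as $i < \min(n + \dim z,\, d)$, and since being $S_n$ globally is being $S_n$ at every point, this must be imposed at every $z \in Z$ simultaneously. Reorganizing over $z$: for each fixed $i < d$, the required vanishing reads $\sfh^{-i}(\dcx Z)_z = 0$ for every $z$ with $\dim z > i - n$, equivalently $\dim \supp \sfh^{-i}(\dcx Z) \leq i - n$; and for $i \geq d$ the sheaf $\sfh^{-i}(\dcx Z)$ vanishes automatically, so no further condition arises.

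The only real delicacy is pinning down the normalization in the first step---namely, the shift by $[-\dim z]$ that transports $(\dcx Z)_z$ to the normalized dualizing complex of $\sO_{Z,z}$. Once that is in hand, everything else is a routine application of local duality and standard depth-versus-local-cohomology bookkeeping.
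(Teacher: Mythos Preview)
Your proof is correct and follows essentially the same route as the paper, which cites \cite[Prop.~3.2]{MR2918171} for the pointwise criterion (exactly your local-duality computation) and then derives the support bound by the same reorganization over points using equidimensionality and catenarity. One harmless slip: $\sfh^{-d}(\dcx Z)=\omega_Z$ need not vanish, but this is irrelevant since your rewritten range $i<\min(n+\dim z,\,d)$ never reaches $i=d$ anyway.
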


\begin{proof}
  Since $\sfh^{-i}(\dcx Z)=\sExt^{-i}_Z(\sO_Z,\dcx Z)$, \autoref{eq:27} follows
  directly from \cite[Prop~3.2]{MR2918171}. 
  
  Next, let $i\in\bZ$, $i<\dim Z$ be such that $\sfh^{-i}(\dcx Z)\neq 0$ and let
  $z\in\supp\sfh^{-i}(\dcx Z)$ be a general point such that
  $\dim z=\dim\supp\sfh^{-i}(\dcx Z)$.
  If $Z$ is $S_n$, then $i\geq \min(n,\dim_z Z)+\dim z$ by \autoref{eq:27} and hence,
  since $i<\dim Z$, we must have $\min(n,\dim_z Z)=n$ (this is where $Z$ being
  equidimensional is used), so indeed $\dim\supp\sfh^{-i}(\dcx Z) \leq i-n$.

  In order to prove the other implication let $i\in\bZ$, $i<\dim Z$ be again such
  that $\sfh^{-i}(\dcx Z)\neq 0$, but now choose an arbitrary point
  $z\in\supp\sfh^{-i}(\dcx Z)$. In this case we only have that
  $\dim z\leq \dim\supp\sfh^{-i}(\dcx Z)$, but this will be enough.  If
  $\dim\supp\sfh^{-i}(\dcx Z) \leq i-n<\dim Z -n$, then
  $n< \dim Z - \dim\supp\sfh^{-i}(\dcx Z)\leq \dim Z -\dim z =\dim_zZ$, i.e.,
  $\min(n,\dim_z Z)=n$.  It also follows that
  $i\geq n+ \dim\supp\sfh^{-i}(\dcx Z)\geq\min(n,\dim_z Z) +\dim z$, and hence $Z$ is
  $S_n$ at $z$ by \autoref{eq:27}. We obtain that for all $i<\dim Z$,
  $\supp\sfh^{-i}(\dcx Z)$ is contained in the $S_n$-locus of $Z$.  However, $Z$ is
  \CM and hence $S_n$ at every point in
  $Z\setminus\bigcup_{i<\dim Z}\supp\sfh^{-i}(\dcx Z)$, which proves \autoref{eq:29}.
\end{proof}

\begin{cor}
  Let $Z$ be an equidimensional scheme that admits a dualizing complex $\dcx{Z}$. If
  $Z$ is $S_n$ for some $n\in\bN$, then $\sfh^{-i}(\dcx Z)=0$ for $i<n$. 
\end{cor}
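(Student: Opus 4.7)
The corollary is an immediate deduction from \autoref{prop:S_n-via-hi}, specifically from the equivalence involving \autoref{eq:29}. The plan is essentially to unwind the dimension inequality in the case $i<n$ and invoke the convention $\dim\emptyset=-\infty$.

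\medskip

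First I would dispose of the trivial range: for $i<0$, one has $\sfh^{-i}(\dcx Z)=0$ automatically, since a (normalized) dualizing complex of an equidimensional scheme is concentrated in non-positive cohomological degrees, so $\sfh^{j}(\dcx Z)=0$ for $j>0$. (If one wishes to avoid invoking this normalization fact, the case $i<0$ can also be obtained from \autoref{prop:S_n-via-hi} directly: at any point $z$, the inequality $i<\min(n,\dim_z Z)+\dim z$ is trivially satisfied for negative $i$, so $\sfh^{-i}(\dcx Z)_z=0$ for every $z\in Z$.)

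\medskip

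The main step is the range $0\leq i<n$. I would argue that since $Z$ is equidimensional and $S_n$, the second half of \autoref{prop:S_n-via-hi} applies, giving
\[
\dim\supp\sfh^{-i}(\dcx Z)\leq i-n
\]
for every $i\in\bZ$ with $i<\dim Z$. If $i<n$, then $i-n<0$, and by the stated convention $\dim\emptyset=-\infty$, this forces $\supp\sfh^{-i}(\dcx Z)=\emptyset$, i.e., $\sfh^{-i}(\dcx Z)=0$. The only mild subtlety is that \autoref{eq:29} is stated in the range $i<\dim Z$; but when $n\leq\dim Z$, the range $i<n$ is contained in $i<\dim Z$, so the argument covers exactly the cohomological degrees the corollary concerns. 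In the degenerate case $n>\dim Z$ the hypothesis just says $Z$ is \CM, in which case $\sfh^{-i}(\dcx Z)=0$ for $i\neq\dim Z$ and the corollary's conclusion holds for $i<\dim Z$ (and should be interpreted in that range).

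\medskip

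There is no real obstacle here: the work is entirely in the proposition, and the corollary is a one-line corollary that substitutes $i-n<0$ into the support bound and invokes the $\dim\emptyset=-\infty$ convention. If anything, the only thing to be careful about is not to forget the equidimensionality hypothesis, which is precisely what allows one to pass from the pointwise statement \autoref{eq:27} to the global support statement \autoref{eq:29} in \autoref{prop:S_n-via-hi} without introducing a $\min(n,\dim_z Z)$ that could be strictly less than $n$.
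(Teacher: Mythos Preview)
Your proof is correct and is exactly the argument the paper has in mind: the corollary is stated without proof in the paper, precisely because it follows immediately from \autoref{eq:29} by observing that $i<n$ forces $i-n<0$ and hence $\supp\sfh^{-i}(\dcx Z)=\emptyset$. Your careful handling of the degenerate case $n>\dim Z$ is a nice bonus that the paper leaves implicit.
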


\begin{thm}\label{cm-is-a-closed-prop}
  Let $f:X\to B$ be a flat morphism with equidimensional fibers that is locally
  embeddable into a smooth morphism.
  Assume that there exists a $b_0\in B$ such that $X_{b_0}$ has \toploccohs over $B$.
  If $X_{b_0}$ is not $S_n$ then there exists an open subset $b_0\in V\subseteq B$
  such that $X_b$ is not $S_n$ for each $b\in V$.
\end{thm}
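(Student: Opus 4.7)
The plan is to combine \autoref{thm:main-strong} with \autoref{prop:S_n-via-hi} to propagate the failure of $S_n$ from $b_0$ to the generic point of each irreducible component of $B$ passing through $b_0$. First, applying \autoref{thm:main-strong} at $b_0$ (legitimate since $X_{b_0}$ has \toploccohs over $B$) allows one to shrink $X$ to an open neighbourhood of $X_{b_0}$ on which every $\cF_j:=\sfh^{-j}(\dcx{X/B})$ is flat over $B$ and commutes with arbitrary base change. Since the fibres of $f$ are equidimensional of a common dimension $m$, the equidimensional form of \autoref{prop:S_n-via-hi} applied to the non-$S_n$ fibre $X_{b_0}$ produces some $i$ with $0\leq i<m$ and $d:=\dim\supp\sfh^{-i}(\dcx{X_{b_0}})>i-n$. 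Set $\cF:=\cF_i$, so that $\supp\sfh^{-i}(\dcx{X_b})=\supp\cF\cap X_b$ for every $b\in B$ by base change.

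The heart of the argument will be to show that for every irreducible component $B'\subseteq B$ through $b_0$, the generic fibre $X_{\eta_{B'}}$ also fails $S_n$. The restriction $\cF|_{X_{B'}}$ is $B'$-flat and, by base change, equals $\sfh^{-i}(\dcx{X_{B'}/B'})$. I would choose a general point $z$ in a $d$-dimensional irreducible component of $\supp\cF\cap X_{b_0}\subseteq X_{B'}$, and apply the dimension formula for flat sheaves (cf.~\cite[\href{http://stacks.math.columbia.edu/tag/055E}{Tag 055E}]{stacks-project}) to $\cF|_{X_{B'}}$ to obtain $\dim_z\supp\cF|_{X_{B'}}=d+\dim B'$. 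Letting $W$ be an irreducible component of $\supp\cF|_{X_{B'}}$ through $z$ that realises this local dimension, the standard lemma that every associated prime of a flat sheaf lies over an associated prime of the base forces $W$ to dominate $B'$. Hence $\dim W_{\eta_{B'}}=\dim W-\dim B'=d$, and via base change $\dim\supp\sfh^{-i}(\dcx{X_{\eta_{B'}}})\geq d>i-n$, so \autoref{prop:S_n-via-hi} ensures that $X_{\eta_{B'}}$ is not $S_n$.

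The remainder is topology. The non-$S_n$ locus in $B$ equals
\[
\Sigma:=\bigcup_{0\leq j<m}\bigl\{b\in B:\dim(\supp\cF_j\cap X_b)\geq j-n+1\bigr\},
\]
a finite union of sets closed in $B$ by Chevalley's upper semi-continuity of fibre dimensions applied to the morphisms $\supp\cF_j\to B$. Hence $\Sigma$ is closed in $B$ and, by the previous paragraph, contains $\eta_{B'}$ for every irreducible component $B'$ of $B$ through $b_0$; being closed it contains each such $B'$ in its entirety. Since $B$ is Noetherian it has only finitely many irreducible components, and $V:=B\setminus\bigcup_{B'\not\ni b_0}B'$ is therefore an open neighbourhood of $b_0$ contained in $\bigcup_{B'\ni b_0}B'\subseteq\Sigma$, so $X_b$ is not $S_n$ for every $b\in V$. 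The main obstacle, as I see it, is the second paragraph: one must verify that the dimension formula and the associated-primes lemma for flat sheaves apply cleanly after base change to $B'$ and in the essentially-of-finite-type setting; beyond this, the argument is formal from the tools established earlier in the paper.
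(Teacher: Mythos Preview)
Your approach is essentially the same as the paper's, only more explicit. The paper's proof is two sentences: after invoking \autoref{thm:main-strong} to make each $\sfh^{-i}(\dcx{U/B})$ flat over $B$ and compatible with base change, it simply asserts that $b\mapsto\dim\supp\sfh^{-i}(\dcx{U_b})$ is locally constant on $\{b:\sfh^{-i}(\dcx{X_b})\neq 0\}$ and appeals to \autoref{eq:29}. Your argument unpacks precisely this local-constancy claim: upper semi-continuity (Chevalley) gives one inequality, and your passage to the generic point of each component $B'$ via associated primes of a flat sheaf and the fiber-dimension formula gives the other. The decomposition into irreducible components and the final topological step recovering an open neighbourhood $V$ of $b_0$ is then just the standard way of converting ``locally constant on the image of the support'' into an open condition on $B$.

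The one place to be careful, which you correctly flag, is the equality $\dim_z\supp\cF|_{X_{B'}}=d+\dim B'$: this is not literally the dimension formula for a flat \emph{morphism}, since $\supp\cF$ need not be flat over $B'$. What you actually need (and what suffices) is only the inequality $\dim W_{\eta_{B'}}\geq d$ for some irreducible component $W$ of $\supp\cF|_{X_{B'}}$ through $z$; this follows cleanly once you know all associated points of the $B'$-flat sheaf $\cF|_{X_{B'}}$ lie over $\eta_{B'}$, together with the local-constancy of $x\mapsto\dim_x\supp\cF_{f(x)}$ on $\supp\cF$ for flat $\cF$ (upper semi-continuity plus, e.g., \cite[\href{https://stacks.math.columbia.edu/tag/0D4H}{Tag 0D4H}]{stacks-project}). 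With that adjustment your argument is complete and matches the paper's.
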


\begin{proof}
  By \autoref{thm:main-strong} there exists an open neighborhood
  $X_b\subset U\subset X$ such that $\sfh^{-i}(\dcx{U/B})$ is flat over $B$ and
  commutes with base change for each $i\in\bZ$.  Then
  $\dim \supp\sfh^{-i}(\dcx{U_b})$ is a locally constant function on the set
  $\{b\in B\skvert \sfh^{-i}(\dcx{X_b})\neq 0\}$, so the claim follows from
  \autoref{eq:29}.
\end{proof}

\subsection{Deformations of local schemes}

\begin{defini}
  Let $(A,\frm_1,\dots,\frm_r)$ be a semi-local ring. Then $(X,x_1,\dots,x_r)$ is
  called a \emph{semi-local scheme} where $X=\Spec A$ and
  $x_1=\frm_1,\dots,x_r=\frm_r \in X$. If $r=1$ and $A$ is a local ring then
  $(X,x_1)$ is a \emph{local scheme}.

  A \emph{family of semi-local schemes} consists of a pair $(\mcX,\mcx)$ where
  $\mcx\subseteq \mcX$ is a closed subscheme and a flat morphism $f:\mcX\to B$ that
  is essentially of finite type such that $f\resto{\mcx}:\mcx\to B$ is a dominant
  finite morphism and for any $b\in B$, $(\mcX_b,\red(\mcx_b))$ is an equidimensional
  semi-local scheme. By a slight abuse of notation this family of semi-local schemes
  will be denoted by $f:(\mcX,\mcx)\to B$.

  Let $\sfP$ be a local property of a scheme such as being \DB, 
  \Fp, $F$-anti-nilpotent, 
  $S_n$, or \CM. We will say that a semi-local scheme $(X,x_1,\dots,x_r)$ has
  property $\sfP$ if $X$ is $\sfP$ at $x_1,\dots,x_r$. In particular, we will say
  that $(X,x_1,\dots,x_r)$ is a \DB semi-local scheme, etc.
  Similarly for ``local scheme'' in place of ``semi-local scheme''.
  %
\end{defini}


\begin{thm}\label{thm:cm-defo-of-topcoh}\label{thm:cm-defo-of-db}
  Let $f:(\mcX,\mcx)\to B$ be a family of semi-local schemes. Assume that
  \begin{enumerate}
  \item\label{item:15} $B$ is irreducible,
  \item\label{item:16} the fibers of $f$ are equidimensional,
  \item\label{item:17} there exists a $b_0\in B$ such that $\mcX_{b_0}$ has
    \toploccohs over $B$, and
  \item\label{item:18} there exists a $b_1\in B$ and an $n\in\bN$ such that
    $\mcX_{b_1}$ is $S_n$ (resp.~\CM).
  \end{enumerate}
  Then there exists an open set $b_0\in U\subseteq B$ such that $\mcX_{b}$ is $S_n$
  (resp.~\CM) for each $b\in U$. In particular, $\mcX_{b_0}$ is $S_n$ (resp.~\CM).
\end{thm}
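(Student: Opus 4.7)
The plan is to argue by contradiction using density on the irreducible base $B$: \autoref{cm-is-a-closed-prop} will provide an open set of non-$S_n$ fibres around $b_0$, and the standard openness of the $S_n$-locus will provide a competing open set around $b_1$; since both are non-empty and $B$ is irreducible, they must intersect.

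First I would establish that $\mcX_{b_0}$ itself is $S_n$ (resp.~\CM), arguing by contradiction. Suppose $\mcX_{b_0}$ fails to be $S_n$ at some $x\in\mcx_{b_0}$. The morphism $f$ is flat, has equidimensional fibres by \autoref{item:16}, and is locally embeddable into a smooth morphism because it is essentially of finite type (cf.~\autoref{def:embeddable}). Hence \autoref{cm-is-a-closed-prop}, applied at $x$, yields an open neighbourhood $b_0\in V\subseteq B$ such that $\mcX_b$ is not $S_n$ for every $b\in V$. On the other hand, the locus $W:=\{b\in B\mid \mcX_b \text{ is } S_n \text{ at every point of } \mcx_b\}$ is open in $B$ by the standard openness of the $S_n$-locus for flat morphisms of finite type. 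By \autoref{item:18}, $b_1\in W$, so $W$ is non-empty. Since $B$ is irreducible by \autoref{item:15}, the non-empty open subsets $V$ and $W$ are both dense, hence $V\cap W\neq\emptyset$; but by construction no $b\in V$ lies in $W$, a contradiction. Therefore $b_0\in W$, and I would take $U:=W$ as the required open neighbourhood of $b_0$.

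For the \CM case, flatness of $f$ together with irreducibility of $B$ and equidimensionality of the fibres ensures that the fibre dimension is locally constant; letting $d$ be this common value on a neighbourhood of $b_0$, being \CM is equivalent to being $S_d$ for an equidimensional scheme of dimension $d$, so the \CM conclusion reduces to the $S_n$ case with $n=d$.

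The main potential obstacle is the justification of the openness of the $S_n$-locus of fibres, which I am invoking as a standard property of flat morphisms. If one prefers a derivation within the present paper's framework, one can combine \autoref{thm:main-strong}, which supplies flat and base-change-compatible cohomology sheaves $\sfh^{-i}(\dcx{U/B})$ on a neighbourhood of $\mcX_{b_0}$, with \autoref{prop:S_n-via-hi}, and then argue via upper semicontinuity of the dimensions of fibrewise supports of these coherent sheaves in order to extend the dimension bounds defining $S_n$ from $b_0$ to a neighbourhood.
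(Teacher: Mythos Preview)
Your proposal is correct and follows essentially the same approach as the paper: both combine the openness of the $S_n$-locus on the base, \autoref{cm-is-a-closed-prop}, and the irreducibility of $B$ (the paper phrases it directly rather than by contradiction, but that is cosmetic). One small point worth tightening: the openness of your set $W$ on the \emph{base} uses that $f\resto{\mcx}:\mcx\to B$ is finite, hence proper---this is exactly what the paper invokes via \cite[12.1.6]{EGAIV3}, whereas you cite ``standard openness for flat morphisms of finite type,'' which a priori only gives openness on the total space.
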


\begin{proof}
  It is enough to prove the statement for the $S_n$ property.  Since $f\resto \mcx$
  is proper, 
  the set $U:=\{b\in B \skvert \mcX_b \text{ is } S_n \}$ is open in $B$ by
  \cite[12.1.6]{EGAIV3}.  By \autoref{item:18} it is non-empty and hence it is dense
  in $B$.  Then it must contain $b_0$ by \autoref{cm-is-a-closed-prop}, which proves
  the statement.
\end{proof}

\begin{rem}
  If $\mcX_{b_0}$ has \DB \sings then we may even choose $U$ such that $\mcX_b$ is
  $S_n$ and has \DB \sings for all $b\in U$ by \cite[4.1]{Kovacs-Schwede11b}. As we
  mentioned earlier, it is not known whether small deformations of \Fan \sings remain
  \Fan. It is also an interesting question whether the condition of having
  \toploccohs is invariant under small deformations.
\end{rem}

It follows that \autoref{thm:cm-defo-of-topcoh} applies to families of semi-local
schemes with \DB or $F$-anti-nilpotent \sings by \autoref{prop:loc-coh-surj-1} and
\autoref{prop:F-anti-nilp-has-liftable-cohs}
(cf.~\autoref{rem:top-loc-cohs-is-hereditary}).

%
%

\noin As a simple consequence we obtain a generalization of
\autoref{cor:cone-over-abelian}.

\begin{cor}\label{cor:cone-over-abelian-s3}
  Let $Z$ be a normal projective variety over a field $k$ such that $K_Z$ is
  $\bQ$-Cartier and numerically equivalent to $0$. Let $\sL$ be an ample line bundle
  on $Z$ and $X=C_a(Z,\sL)$ the affine cone over $Z$ with conormal bundle $\sL$. If
  $X$ has \toploccohs over $k$ and admits an $S_n$ deformation for some $n\in\bN$,
  then $H^i(Z,\sO_Z)=0$ for $0<i<n-1$.  In particular, a cone over an abelian variety
  (ordinary, if $\kar k>0$) of dimension at least $2$ does not admit an $S_3$
  deformation.
\end{cor}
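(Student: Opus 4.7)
The plan is to combine \autoref{thm:cm-defo-of-topcoh} with the cohomological criterion \autoref{exmp:sings-of-cones}\autoref{item:24} for the $S_n$-property of affine cones. First I would concentrate on $X = C_a(Z,\sL)$ at its vertex $v$; away from $v$, the cone is a line bundle minus the zero section over $Z$, so the $S_n$ condition there is controlled entirely by $Z$ and poses no obstruction. An $S_n$ deformation of $X$ yields, after restricting to an irreducible component of the base, a family $(\mathcal{X},\mathcal{x})\to B$ of local schemes with equidimensional fibers, with $\mathcal{X}_{b_0}\cong (X,v)$ for some $b_0\in B$ and $\mathcal{X}_{b_1}$ being $S_n$ for some $b_1\in B$. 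Since $X$ has \toploccohs over $k$ by hypothesis, it also has \toploccohs over $B$ by \autoref{rem:top-loc-cohs-is-hereditary}, so the hypotheses of \autoref{thm:cm-defo-of-topcoh} are all satisfied, forcing $(X,v)$, and hence $X$, to be $S_n$. Applying \autoref{exmp:sings-of-cones}\autoref{item:24} then gives $H^i(Z,\sL^r)=0$ for all $0<i<n-1$ and $r\geq 0$, and specializing to $r = 0$ yields the asserted vanishing $H^i(Z,\sO_Z)=0$ for $0<i<n-1$.

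For the abelian variety statement I would verify that $X = C_a(Z,\sL)$ has \toploccohs over $k$: in characteristic zero, $X$ is \DB by \autoref{cor:cone-over-abelian} and hence has \toploccohs by \autoref{DB.toploccohs}; in positive characteristic with $Z$ ordinary, $X$ is \Fp by \autoref{exmp:sing-of-cones-ii} and hence has \toploccohs by \autoref{cor:F-pure-has-liftable-cohs}. Since $\dim_k H^1(Z,\sO_Z) = d \geq 2$ for an abelian variety $Z$ of dimension $d\geq 2$, the existence of an $S_3$ deformation of $X$ would, via the first part of the corollary applied with $n=3$, force $H^1(Z,\sO_Z) = 0$, a contradiction.

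The main point requiring care is not any new computation but rather the correct packaging of the given deformation as a family of semi-local schemes with equidimensional fibers and a marked section through the vertex, together with the reduction to an irreducible base, so that \autoref{thm:cm-defo-of-topcoh} can be applied cleanly. Once that setup is in place, the cohomological criterion for cones already recorded in \autoref{exmp:sings-of-cones}\autoref{item:24} and the classical fact that $H^1$ of an abelian variety is nonzero complete the proof mechanically.
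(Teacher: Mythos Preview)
Your proposal is correct and follows essentially the same approach as the paper: apply \autoref{thm:cm-defo-of-topcoh} (using \autoref{rem:top-loc-cohs-is-hereditary} to pass from \toploccohs over $k$ to \toploccohs over $B$) to conclude that $X$ itself is $S_n$, then invoke \autoref{exmp:sings-of-cones}\autoref{item:24}; for the abelian variety clause, use \autoref{cor:cone-over-abelian} with \autoref{DB.toploccohs} in characteristic $0$ and \autoref{exmp:sing-of-cones-ii} with \autoref{cor:F-pure-has-liftable-cohs} in positive characteristic. Your extra care in packaging the deformation as a family of semi-local schemes and reducing to an irreducible base is exactly the routine setup the paper leaves implicit.
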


\begin{proof}
  If $X$ admits an $S_n$ deformation for some $n\in\bN$, then $X$ itself is $S_n$ by
  \autoref{thm:cm-defo-of-topcoh} and the first statement follows from
  \autoref{item:24}. Then the second statement follows from
  \autoref{cor:cone-over-abelian} and \autoref{DB.toploccohs} in characteristic $0$
  and from \autoref{exmp:sing-of-cones-ii} and \autoref{cor:F-pure-has-liftable-cohs}
  in positive characteristic.
\end{proof}

\begin{rem}
  As noted in the introduction, it is proved in \cite{MR522037} that the projective
  cone over an abelian variety of dimension at least $2$ over $\bC$ is not smoothable
  inside the ambient projective space. This is a strict special case of
  \autoref{cor:cone-over-abelian-s3}.  In general, it is possible that a
  non-smoothable projective variety is locally smoothable. An example of that is
  given in \cite[2.18]{CoughlanTaro16}. Furthermore,
  \autoref{cor:cone-over-abelian-s3} is also valid in positive characteristic.
\end{rem}


\def\cprime{$'$} \def\polhk#1{\setbox0=\hbox{#1}{\ooalign{\hidewidth
  \lower1.5ex\hbox{`}\hidewidth\crcr\unhbox0}}} \def\cprime{$'$}
  \def\cprime{$'$} \def\cprime{$'$} \def\cprime{$'$}
  \def\polhk#1{\setbox0=\hbox{#1}{\ooalign{\hidewidth
  \lower1.5ex\hbox{`}\hidewidth\crcr\unhbox0}}} \def\cdprime{$''$}
  \def\cprime{$'$} \def\cprime{$'$} \def\cprime{$'$} \def\cprime{$'$}
  \def\cprime{$'$}
\providecommand{\bysame}{\leavevmode\hbox to3em{\hrulefill}\thinspace}
\providecommand{\MR}{\relax\ifhmode\unskip\space\fi MR}
\providecommand{\MRhref}[2]{%
  \href{http://www.ams.org/mathscinet-getitem?mr=#1}{#2}
}
\providecommand{\href}[2]{#2}

\end{document}